\definecolor{backgroundcolor}{rgb}{1,1,0.9}
\numberwithin{equation}{section}
\newcommand{\blank}[1]{{}}
\def\bbar#1{\setbox0=\hbox{$#1$}\dimen0=.2\ht0 \kern\dimen0 }
\newcommand{\defi}[1]{\textsf{#1}} 
\newenvironment{romanenum}{\hfill \begin{enumerate} }{\end{enumerate}}
\newenvironment{alphenum}{\hfill \begin{enumerate} }{\end{enumerate}}
  \newcommand{\FF}{{\mathbb F}}
 \newcommand{\QQ}{{\mathbb Q}}
\newcommand{\ZZ}{{\mathbb Z}}
\def\bbar#1{\setbox0=\hbox{$#1$}\dimen0=.2\ht0 \kern\dimen0 \overline{\kern-\dimen0 #1}}
\newcommand{\Qbar}{{\overline{\mathbb Q}}}
 \newcommand{\FFbar}{\overline{\FF}}
\newcommand{\calC}{{\mathcal C}}
\newcommand{\calI}{{\mathcal I}}
\newcommand{\calK}{{\mathcal K}}
\newcommand{\calM}{{\mathcal M}}
\newcommand{\calN}{{\mathcal N}}
\newcommand{\OO}{{\mathcal O}}
\DeclareMathOperator{\tr}{tr}
\DeclareMathOperator{\Frob}{Frob}
\DeclareMathOperator{\Aut}{Aut}
\DeclareMathOperator{\Gal}{Gal}
\newcommand{\proj}{{\operatorname{proj}}}
\newcommand{\ab}{{\operatorname{ab}}}
\newcommand{\GL}{\operatorname{GL}}
\newcommand{\SL}{\operatorname{SL}}
\newcommand{\PGL}{\operatorname{PGL}}
\newcommand{\PSL}{\operatorname{PSL}}
\def\CC{\mathbb C}
\newtheorem{theorem}{Theorem}[section]
\newtheorem{lemma}[theorem]{Lemma}
\theoremstyle{definition}
\theoremstyle{remark}
\newtheorem{remark}[theorem]{Remark}
\definecolor{webcolor}{rgb}{0,0,1}
\definecolor{webbrown}{rgb}{.6,0,0}
\begin{document}

\title[Modular forms and some cases of the inverse Galois problem]{Modular forms and some cases of the inverse Galois problem}
\subjclass[2010]{Primary 12F12; Secondary 11F11}
\author{David Zywina}
\address{Department of Mathematics, Cornell University, Ithaca, NY 14853, USA}
\email{zywina@math.cornell.edu}
\urladdr{http://www.math.cornell.edu/~zywina}

\begin{abstract} 
We prove new cases of the inverse Galois problem by considering the residual Galois representations arising from a fixed newform.   Specific choices of weight $3$ newforms will show that there are Galois extensions of $\QQ$ with Galois group $\PSL_2(\FF_p)$ for all primes $p$ and $\PSL_2(\FF_{p^3})$ for all odd primes $p \equiv  \pm 2, \pm 3, \pm 4, \pm 6 \pmod{13}$.
\end{abstract}

\maketitle

\section{Introduction}

The \defi{Inverse Galois Problem} asks whether every finite group is isomorphic to the Galois group of some extension of $\QQ$.   There has been much work on using modular forms to realize explicit simple groups of the form $\PSL_2(\FF_{\ell^r})$ as Galois groups of extensions of $\QQ$, cf.~\cite{MR0419358}, \cite{MR1352266}, \cite{MR1800679},  \cite{MR1879665}, \cite{MR2512358}.   For example, \cite{MR1800679}*{\S3.2} shows that $\PSL_2(\FF_{\ell^2})$ occurs as a Galois group of an extension of $\QQ$ for all primes $\ell$ in a explicit set of density $1-1/2^{10}$ (and for primes $\ell \leq 5000000$).    Also it is shown in \cite{MR1800679} that $\PSL_2(\FF_{\ell^4})$ occurs as a Galois group of an extension of $\QQ$ when  $\ell \equiv 2,3 \pmod{5}$ or $\ell\equiv \pm 3,\pm 5,\pm 6,\pm 7 \pmod{17}$.

The goal of this paper is to try to realize more groups of the form $\PSL_2(\FF_{\ell^r})$ for \emph{odd} $r$.  We will achieve this by working with newforms of odd weight;  the papers mentioned above focus on even weight modular forms (usual weight $2$).    We will give background and describe the general situation in \S\ref{SS:general result}.   In \S\ref{SS:weight 3 level 27} and \S\ref{SS:weight 3 level 160}, we will use specific newforms of weight $3$ to realize many groups of the form $\PSL_2(\FF_{\ell^r})$ with $r$ equal to $1$ and $3$, respectively.

Throughout the paper, we fix an algebraic closure $\Qbar$ of $\QQ$ and define the group $G:= \Gal(\Qbar/\QQ)$.  For a ring $R$, we let $\PSL_2(R)$ and $\PGL_2(R)$ be the quotient of $\SL_2(R)$ and $\GL_2(R)$, respectively, by its subgroup of scalar matrices (in particular, this notation may disagree with the $R$-points of the corresponding group scheme $\PSL_2$ or $\PGL_2$).      

\subsection{General results} \label{SS:general result}

Fix a non-CM newform $f(\tau)=\sum_{n=1}^\infty a_n q^n$ of weight $k>1$ on $\Gamma_1(N)$, where the $a_n$ are complex numbers and $q=e^{2\pi i \tau}$ with $\tau$ a variable of the complex upper-half plane.    Let $\varepsilon\colon (\ZZ/N\ZZ)^\times \to \CC^\times$ be the nebentypus of $f$.

Let $E$ be the subfield of $\CC$ generated by the coefficients $a_n$; it is also generated by the coefficients $a_p$ with primes $p\nmid N$.   The field $E$ is a number field and all the $a_n$ are known to lie in its ring of integers  $\OO$.  The image of $\varepsilon$ lies in $E^\times$.    Let $K$ be the subfield of $E$ generated by the algebraic integers $r_p:=a_p^2/\varepsilon(p)$ for primes $p\nmid N$; denote its ring of integer by $R$.

Take any non-zero prime ideal $\Lambda$ of $\OO$ and denote by $\ell=\ell(\Lambda)$ the rational prime lying under $\Lambda$.  Let $E_\Lambda$ and $\OO_\Lambda$ be the completions of $E$ and $\OO$, respectively, at $\Lambda$.   From Deligne \cite{Deligne71-179}, we know that there is a continuous representation 
\[
\rho_\Lambda \colon G  \to \GL_2(\OO_\Lambda)
\]
such that for each prime $p\nmid N\ell$, the representation $\rho_\Lambda$ is unramified at $p$ and satisfies
\begin{equation} \label{E:trace and det}
\tr(\rho_\Lambda(\Frob_p)) = a_p  \quad \text{ and } \quad \det(\rho_\Lambda(\Frob_p))= \varepsilon(p) p^{k-1}.
\end{equation}
The representation $\rho_\Lambda$ is uniquely determined by the conditions (\ref{E:trace and det}) up to conjugation by an element of $\GL_2(E_\Lambda)$.    By composing $\rho_\Lambda$ with the natural projection arising from the reduction map $\OO_\Lambda \to \FF_\Lambda:=\OO/\Lambda$, we obtain a representation
\[
\bbar\rho_\Lambda \colon G \to \GL_2(\FF_\Lambda).
\]
Composing $\bbar\rho_\Lambda$ with the natural quotient map $\GL_2(\FF_\Lambda)\to \PGL_2(\FF_\Lambda)$, we obtain a homomorphism
\[
\bbar\rho_\Lambda^\proj \colon G \to \PGL_2(\FF_\Lambda).
\]
Define the field $\FF_\lambda:=R/\lambda$, where $\lambda:= \Lambda\cap R$.   There are natural injective homomorphisms $\PSL_2(\FF_\lambda) \hookrightarrow \PGL_2(\FF_\lambda) \hookrightarrow \PGL_2(\FF_\Lambda)$ and $\PSL_2(\FF_\Lambda) \hookrightarrow \PGL_2(\FF_\Lambda)$ that we shall view as inclusions.

The main task of this paper is to describe the group $\bbar\rho_\Lambda^\proj(G)$ for all $\Lambda$ outside of some \emph{explicit} set.     The following theorem of Ribet gives two possibilities for  $\bbar\rho_\Lambda^\proj(G)$ for all but finitely many $\Lambda$.

\begin{theorem}[Ribet] \label{T:Ribet}
There is a finite set $S$ of non-zero prime ideals of $R$ such that if $\Lambda$ is a non-zero prime ideal of $\OO$ with $\lambda:=R\cap \Lambda  \notin S$, then the group $\bbar\rho_\Lambda^\proj(G)$ is conjugate in $\PGL_2(\FF_\Lambda)$ to either $\PSL_2(\FF_\lambda)$ or $\PGL_2(\FF_\lambda)$.
\end{theorem}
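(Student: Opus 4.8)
The plan is to combine the known classification of finite subgroups of $\PGL_2(\FF_\ell)$ (going back to Dickson) with the large-image results of Ribet and Momose for the $\ell$-adic representations attached to $f$. The starting observation is that $\bbar\rho_\Lambda^\proj(G)$ is a finite subgroup of $\PGL_2(\FF_\Lambda)$, hence by Dickson's theorem it is, up to conjugacy, one of: a cyclic group, a dihedral group, one of $A_4$, $S_4$, $A_5$, a group of upper-triangular shape (i.e. contained in the normalizer of a split torus, or contained in a Borel), a group lying in the normalizer of a nonsplit torus, or a group conjugate to $\PSL_2(\FF_{\ell^m})$ or $\PGL_2(\FF_{\ell^m})$ for some $m$ with $\ell^m \mid \#\FF_\Lambda$. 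So the argument amounts to ruling out all the ``small'' possibilities for all but finitely many $\Lambda$, and then pinning down the exponent $m$ so that $\ell^m = \#\FF_\lambda$.

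First I would dispose of the bounded-order exceptional subgroups $A_4$, $S_4$, $A_5$ and the Borel/abelian-by-small cases using the irreducibility and non-exceptionality results already present in the literature for $\bbar\rho_\Lambda$: since $f$ is non-CM, Ribet's theorem (in \cite{MR0419358}, and its strengthenings) shows that for all but finitely many $\Lambda$ the representation $\bbar\rho_\Lambda$ is absolutely irreducible with image not contained in the normalizer of a torus and not of exceptional ($A_4,S_4,A_5$) projective type; equivalently $\bbar\rho_\Lambda^\proj(G)$ contains $\PSL_2(\FF_{\ell})$-type behaviour. The key input that makes the exceptional set finite (rather than merely cofinite in a non-effective sense) is that the order of an exceptional or dihedral image is bounded independently of $\ell$, whereas $\#\bbar\rho_\Lambda^\proj(G)$ grows: one sees this by noting that the image is constrained by the determinant $\varepsilon(p)p^{k-1}$ and the traces $a_p$, and $\tr \bbar\rho_\Lambda(\Frob_p)^2/\det$ reduces to $r_p \bmod \lambda$, so the image generates (the relevant part of) $\FF_\lambda$; for $\ell$ large this forces the image to be too big to be dihedral or exceptional.

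Next, having excluded everything but the $\PSL_2/\PGL_2$ cases, I would identify the field of definition. The content here is that the subfield $\FF_{\ell^m}$ appearing is exactly $\FF_\lambda = R/\lambda$: the subgroup $\PSL_2(\FF_{\ell^m})$ (resp.\ $\PGL_2(\FF_{\ell^m})$) is characterized inside $\PGL_2(\FF_\Lambda)$ by the set of ``squared-trace-over-determinant'' values of its elements, which for our image is precisely the subring of $\FF_\Lambda$ generated by the reductions of the $r_p=a_p^2/\varepsilon(p)$ — and that subring is $\FF_\lambda$ by the very definition of $R$ (together with Chebotarev, which guarantees the $\Frob_p$ fill up the image). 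Thus $m$ is the residue degree so that $\#\FF_\lambda = \ell^m$, and $\bbar\rho_\Lambda^\proj(G)$ is conjugate to $\PSL_2(\FF_\lambda)$ or $\PGL_2(\FF_\lambda)$. Finally, $S$ is taken to be the (finite) union of $R\cap\Lambda$ over the finitely many bad $\Lambda$ produced above, possibly enlarged by the finitely many primes of $R$ dividing $N$, the level, the primes below which $f$ has bad reduction behaviour, and small primes where Dickson's dichotomy between $\PSL_2$ and the smaller groups can fail — this is legitimate since a finite set of primes of $\OO$ meets $R$ in a finite set of primes of $\lambda$.

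The main obstacle, and the place where one must be careful rather than routine, is making the exceptional set $S$ genuinely \emph{finite and depending only on $f$}: one needs a uniform (in $\ell$) bound showing that the dihedral and exceptional cases occur for only finitely many $\Lambda$. This is exactly the heart of Ribet's theorem and ultimately rests on the fact that the image of the $\ell$-adic representation $\rho_\Lambda$ is open in the appropriate subgroup of $\GL_2(\OO_\Lambda)$ for almost all $\ell$ (a non-CM, big-image statement), so that the mod-$\Lambda$ image cannot degenerate. I would invoke this rather than reprove it; the rest of the argument — Dickson's classification plus the trace/determinant bookkeeping identifying $\FF_\lambda$ — is then essentially formal.
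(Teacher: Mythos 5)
Your overall route is the one the paper itself takes: the paper's proof of this theorem is literally a citation (Dieulefait--Wiese \S3, deducing it from Ribet's 1985 large-image theorem), and your plan --- Dickson's classification, elimination of the small subgroups for almost all $\Lambda$ by appeal to Ribet/Momose, then identification of the field of definition via the values $\tr(A)^2/\det(A)$, which at Frobenius elements are the reductions of $r_p/p^{k-1}$ and generate $\FF_\lambda$ away from the finitely many $\lambda$ dividing the index in $R$ of the order generated by the $r_p$ --- is exactly the standard deduction. As a proof-by-citation it is acceptable and matches the paper; the paper's real content is the \emph{effective} version, proved later by a self-contained case elimination.

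Where your sketch supplies its own justification, however, it is wrong in a way worth flagging. Borel subgroups and normalizers of Cartan subgroups of $\PGL_2(\FF_{\ell^r})$ are \emph{not} of order bounded independently of $\ell$ (a dihedral projective image can have order $2(\ell^r+1)$), and the fact that the trace data generate a large subfield does not exclude them: a nonsplit Cartan already realizes plenty of values $\tr(A)^2/\det(A)$ in $\FF_{\ell^r}$. Only the $\mathfrak{A}_4$, $\mathfrak{S}_4$, $\mathfrak{A}_5$ possibilities are killed by bounded order, and even there the paper needs the image of inertia at $\ell$ (Deligne/Fontaine) to produce an element of projective order bigger than $4$ or $5$ once $\ell>4k-3$ or $\ell>5k-4$. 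In the paper's effective argument the reducible and dihedral cases are instead eliminated by ramification and conductor constraints: a character occurring in a reducible reduction is, up to a power of the mod-$\ell$ cyclotomic character, unramified outside $N$ with Artin conductor dividing $N$ (Livn\'e); a Cartan image is impossible because $\bbar\rho_\Lambda$ is odd; and in the normalizer-of-Cartan case one obtains a quadratic character of conductor dividing $\calM$, which forces $\tr(\bbar\rho_\Lambda(\Frob_p))\equiv 0$ at suitable primes $p$, contradicting $r_p\not\equiv 0\pmod{\lambda}$ --- size plays no role. Relatedly, openness of the image of $\rho_\Lambda$ for each fixed $\Lambda$ does not by itself bound the exceptional set; one needs Ribet's statement that is uniform in $\lambda$, which is precisely what the citation supplies. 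Since you explicitly defer the heart of the matter to Ribet's theorem your proposal stands, but the heuristic you interpose would not survive as a proof of the Borel and dihedral cases.
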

\begin{proof}
As noted in \S3 of \cite{MR2806684}, this is an easy consequence of \cite{MR819838}.   
\end{proof}

We will give a proof of Theorem~\ref{T:Ribet} in \S\ref{S:effective Ribet} that allows one to compute such a set $S$.    There are several related results in the literature; for example, Billerey and Dieulefait \cite{MR3188630} give an version of Theorem~\ref{T:Ribet} when the nebentypus $\varepsilon$ is trivial.

 We now explain how to distinguish the  two possibilities from Theorem~\ref{T:Ribet}.  Let $L\subseteq \CC$ be the extension of $K$ generated by the square roots of the values $r_p=a_p^2/\varepsilon(p)$ with $p\nmid N$; it is a finite extension of $K$ (moreover, it is contained in a finite cyclotomic extension of $E$).  

\begin{theorem}  \label{T:PSL2 vs GL2}
Let $\Lambda$ be a non-zero prime ideal of $\OO$ such that $\bbar\rho_\Lambda^\proj(G)$ is conjugate to $\PSL_2(\FF_\lambda)$ or $\PGL_2(\FF_\lambda)$, where $\lambda=\Lambda\cap R$.   After conjugating $\bbar\rho_\Lambda$, we may assume that $\bbar\rho_\Lambda^\proj(G) \subseteq \PGL_2(\FF_\lambda)$.  Let $\ell$ be the rational prime lying under $\Lambda$.
\begin{romanenum}
\item \label{T:PSL2 vs GL2 i}
If $k$ is odd, then $\bbar\rho_{\Lambda}^\proj (G) = \PSL_2(\FF_\lambda)$ if and only if $\lambda$ splits completely in $L$.
\item \label{T:PSL2 vs GL2 ii}
If $k$ is even and $[\FF_\lambda:\FF_\ell]$ is even, then $\bbar\rho_{\Lambda}^\proj (G) = \PSL_2(\FF_\lambda)$ if and only if $\lambda$ splits completely in $L$.
\item \label{T:PSL2 vs GL2 iii}
If $k$ is even, $[\FF_\lambda:\FF_\ell]$ is odd, and $\ell\nmid N$, then $\bbar\rho_{\Lambda}^\proj (G) = \PGL_2(\FF_\lambda)$.\end{romanenum}
\end{theorem}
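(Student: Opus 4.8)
The plan is to analyze the situation through the determinant map $\det\circ\bbar\rho_\Lambda\colon G\to\FF_\Lambda^\times$, and the closely related ``spinor norm''-type obstruction that detects whether the image of $\bbar\rho_\Lambda$ lies in the subgroup of $\GL_2(\FF_\Lambda)$ generated by $\SL_2(\FF_\Lambda)$ and scalars. Recall that for a subgroup $H$ of $\GL_2(\FF_\Lambda)$ whose image in $\PGL_2(\FF_\Lambda)$ is $\PSL_2(\FF_\lambda)$ or $\PGL_2(\FF_\lambda)$ (sitting inside $\PGL_2(\FF_\lambda)$ after conjugation), the image in $\PGL_2$ is $\PSL_2(\FF_\lambda)$ precisely when $H$ is contained in $\FF_\Lambda^\times\cdot\SL_2(\FF_\lambda)$, equivalently when every element of $H$ has the form (scalar)$\cdot$(element of determinant $1$ over $\FF_\lambda$). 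The key numerical handle: for $p\nmid N\ell$, the element $\bbar\rho_\Lambda(\Frob_p)$ has characteristic polynomial $X^2-a_pX+\varepsilon(p)p^{k-1}$ over $\FF_\Lambda$, and whether $\Frob_p$ maps into $\FF_\Lambda^\times\cdot\SL_2(\FF_\lambda)$ is governed by whether $\det(\bbar\rho_\Lambda(\Frob_p))=\varepsilon(p)p^{k-1}$ is a square in $\FF_\Lambda^\times$ after dividing out by an appropriate scalar; this is exactly where $r_p=a_p^2/\varepsilon(p)$ and its square roots enter. So the first step is to make precise the dictionary: $\bbar\rho_\Lambda^\proj(G)=\PSL_2(\FF_\lambda)$ iff the homomorphism $G\to\FF_\Lambda^\times/(\FF_\lambda^\times\cdot(\FF_\Lambda^\times)^2)$ (or a similarly assembled character) induced by $\det$ is trivial, and then identify that character explicitly via (\ref{E:trace and det}).

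Next I would compute the relevant character on Frobenius elements. Since $\rho_\Lambda$ has coefficients in $\OO_\Lambda$ with $\det\rho_\Lambda=\varepsilon\cdot\chi_{\cyc}^{k-1}$ (cyclotomic character to the $(k-1)$st power, reduced mod $\Lambda$), and since $\bbar\rho_\Lambda^\proj(G)\subseteq\PGL_2(\FF_\lambda)$ forces the image of $\bbar\rho_\Lambda$ into $\FF_\Lambda^\times\cdot\GL_2(\FF_\lambda)$, one can twist $\bbar\rho_\Lambda$ by a character to land in $\GL_2(\FF_\lambda)$ itself; after this twist the determinant becomes an $\FF_\lambda^\times$-valued character whose square-class measures $\PSL$ vs.\ $\PGL$. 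Evaluating at $\Frob_p$: the twisted determinant is, up to squares in $\FF_\lambda^\times$, the class of $r_p=a_p^2/\varepsilon(p)$ when $k$ is odd (the parity of $k-1$ makes $p^{k-1}$ a square and $\varepsilon(p)p^{k-1}$ contributes the class of $\varepsilon(p)$, which pairs with $a_p^2$ to give $r_p$ in the relevant quotient); this is the reduction mod $\lambda$ of the statement ``$r_p$ is a square.'' Hence the obstruction character is trivial iff $r_p$ is a square in $\FF_\lambda$ for all $p\nmid N\ell$ — by Chebotarev and the definition of $L$ as the field generated by $\sqrt{r_p}$, this is equivalent to $\lambda$ splitting completely in $L$. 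That gives (\ref{T:PSL2 vs GL2 i}). For (\ref{T:PSL2 vs GL2 ii}), when $k$ is even but $[\FF_\lambda:\FF_\ell]$ is even, the factor $p^{k-1}$ is an odd power of $p$, but $\FF_\ell^\times\subseteq(\FF_\lambda^\times)^2$ since $\FF_\lambda$ contains the quadratic extension of $\FF_\ell$, so $p^{k-1}$ is again a square in $\FF_\lambda$ and the same computation goes through. For (\ref{T:PSL2 vs GL2 iii}), when $k$ is even, $[\FF_\lambda:\FF_\ell]$ odd, and $\ell\nmid N$: here I would argue that the determinant character $\det\bbar\rho_\Lambda$ restricted to inertia at $\ell$ is $\chi_{\cyc}^{k-1}$, which is surjective onto $\FF_\ell^\times$ (using $\ell\nmid N$ so the nebentypus is unramified at $\ell$, hence trivial on $\ell$-inertia), and since $[\FF_\lambda:\FF_\ell]$ is odd, $\FF_\ell^\times$ is not contained in $(\FF_\lambda^\times)^2$ — so the obstruction character is automatically nontrivial and the image must be $\PGL_2(\FF_\lambda)$.

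I would organize the write-up as: (1) a lemma translating ``$\bbar\rho_\Lambda^\proj(G)=\PSL_2(\FF_\lambda)$'' into the triviality of an explicit quadratic character $\delta\colon G\to\{\pm1\}$ built from $\det\bbar\rho_\Lambda$ and the square-class of $\FF_\Lambda^\times$ over $\FF_\lambda^\times$; (2) a computation of $\delta(\Frob_p)$ in terms of the Kronecker-type symbol of $r_p$ (and of $p^{k-1}$) modulo $\lambda$; (3) a Chebotarev/field-theory step identifying ``$\delta\equiv1$'' with ``$\lambda$ splits completely in $L$,'' together with the ramification-at-$\ell$ argument for case (iii). The main obstacle I anticipate is step (1): carefully controlling the twist needed to bring $\bbar\rho_\Lambda$ into $\GL_2(\FF_\lambda)$ and checking that the resulting square-class obstruction is genuinely independent of the choice of twist and of the conjugation used to place the image in $\PGL_2(\FF_\lambda)$ — in particular, one must verify that altering the twist changes the determinant by a square in $\FF_\lambda^\times$ (since $\FF_\Lambda^\times/(\FF_\Lambda^\times)^2$ interacts with $\FF_\lambda^\times$ in a way that depends on the parity of $[\FF_\lambda:\FF_\ell]$, which is exactly why the three cases split the way they do). Once that bookkeeping is pinned down, steps (2) and (3) are routine Chebotarev density arguments.
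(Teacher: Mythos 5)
Your overall strategy for (\ref{T:PSL2 vs GL2 i}) and (\ref{T:PSL2 vs GL2 ii}) is essentially the paper's: build a quadratic obstruction character from the determinant, evaluate it on Frobenius to see the square class of $r_p/p^{k-1}$, and run Chebotarev. But the specific mechanism you propose --- twist $\bbar\rho_\Lambda$ by a character to land in $\GL_2(\FF_\lambda)$ and then read off $\det$ --- is both unnecessary and not automatically available: the existence of such a global twist is obstructed by a class in $H^2(G,\FF_\lambda^\times)$, and you would need an argument (e.g.\ oddness of the representation together with local-global input) to show that obstruction vanishes. The paper sidesteps this completely by observing that for any $\alpha\in\PGL_2(\FF_\lambda)$ with nonzero trace the quantity $\tr(\alpha)^2/\det(\alpha)$ is a well-defined element of $\FF_\lambda$, and its square class equals the square class of $\det$ of \emph{any} $\GL_2(\FF_\lambda)$-representative; no twist is required. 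You correctly flag the twist step as your main worry, and indeed it is the weak point --- but the fix is to drop the twist, not to try to justify it.

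Your argument for (\ref{T:PSL2 vs GL2 iii}) is a genuinely different route from the paper's and, as stated, has a real gap. You want to look at $\det\bbar\rho_\Lambda$ restricted to inertia at $\ell$ and conclude directly that the obstruction character is nontrivial. Two problems. First, the obstruction character at $g$ is the square class in $\FF_\lambda^\times/(\FF_\lambda^\times)^2$ of $\det A_0$ for a representative $A_0\in\GL_2(\FF_\lambda)$; this differs from $\det\bbar\rho_\Lambda(g)$ by $c^2$ with $c\in\FF_\Lambda^\times$, and a square in $\FF_\Lambda^\times$ need not be a square in $\FF_\lambda^\times$ when $[\FF_\Lambda:\FF_\lambda]$ is even. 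You must normalize by the trace (i.e.\ pass to $\tr^2/\det$) to kill that ambiguity, which your determinant-only argument does not do. Second, to control $\bbar\rho_\Lambda|_{\calI_\ell}$ you would be invoking the Deligne/Fontaine description (Lemma~\ref{L:inertia} in the paper), which requires $\ell\geq k-1$; the theorem's hypothesis is only $\ell\nmid N$, so you would also have to dispose of small $\ell$ and of the supersingular case separately. (A minor slip as well: $\chi_\ell^{k-1}|_{\calI_\ell}$ is not surjective onto $\FF_\ell^\times$ in general --- its image has index $\gcd(k-1,\ell-1)$ --- though for $k$ even this index is odd, which is what you actually need.) The paper's proof of (\ref{T:PSL2 vs GL2 iii}) avoids all of this: it uses Lemma~\ref{L:rp progressions} to find a prime $p\equiv 1\pmod{M}$ with $p$ a nonsquare mod $\ell$ and $r_p\not\equiv 0\pmod\lambda$, deduces via the inner-twist group $\Gamma$ that $a_p\in R$ and $\varepsilon(p)=1$, and then applies Lemma~\ref{L:PSL2 p criterion} directly, with no assumptions on $\ell$ beyond $\ell\nmid N$. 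Finally, in (\ref{T:PSL2 vs GL2 i})--(\ref{T:PSL2 vs GL2 ii}) you also need the analogue of Lemma~\ref{L:rp progressions} to ensure you can pick generators of $(\ZZ/M\ZZ)^\times$ among primes $p$ with $r_p\not\equiv 0\pmod\lambda$ before applying Lemma~\ref{L:L description}; this point is glossed over in your sketch.
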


\begin{remark} 
\begin{romanenum}
\item
From Theorem~\ref{T:PSL2 vs GL2}, we see that it is more challenging to produce Galois extensions of $\QQ$ with Galois group $\PSL_2(\FF_{\ell^r})$ with odd $r$ if we focus solely on newforms with $k$ even.    However, it is still possible to obtain such groups in the excluded case $\ell | N$.
\item
Parts (\ref{T:PSL2 vs GL2 ii}) and (\ref{T:PSL2 vs GL2 iii}) of Theorem~\ref{T:PSL2 vs GL2} are included for completeness, see \cite{MR1879665}*{Proposition~1.5} for an equivalent version in the case $k=2$ due to Dieulefait.   Surprisingly, there has been very little attention in the literature given to the case where $k$ is odd (commenting on a preprint of this work, Dieulefait has shared several explicit examples worked out with Tsaknias and Vila).   In \S\ref{SS:weight 3 level 27} and \ref{SS:weight 3 level 160}, we give examples with $k=3$ and $L=K$ (so $\lambda$ splits in $L$ for any $\lambda$).
\end{romanenum}
\end{remark}

\subsection{An example realizing the groups $\text{PSL}_2(\FF_\ell)$}  \label{SS:weight 3 level 27}

We now give an example that realizes the simple groups $\PSL_2(\FF_\ell)$  as Galois groups of an extension of $\QQ$ for all primes $\ell\geq 7$.   Let $f=\sum_{n=1}^\infty a_n q^n$ be a non-CM newform of weight $3$, level $N=27$ and nebentypus $\varepsilon(a)=\big(\frac{-3}{a}\big)$.   We can choose $f$ so that\footnote{More explicitly, take
	$f=\tfrac{i}{2} g \theta_0 - \tfrac{1+i}{2}  g \theta_1 + \tfrac{3}{2} g \theta_2$, 
	where $g:= q \prod_{n \geq 1} (1-q^{3n})^2 (1-q^{9n})^2$ and  $\theta_j:= \sum_{x,y \in \ZZ} q^{3^j(x^2+xy+y^2)}$, cf.~\cite{MR885783}*{p.~228}.    }
\begin{align*}
f   =q &+ 3iq^2 - 5q^4 - 3iq^5 + 5q^7 - 3iq^8 + 9q^{10} - 15iq^{11} - 10q^{13} + \cdots;
\end{align*}
the other possibility for $f$ is its complex conjugate $\sum_n \bbar{a}_n q^n$.   

The subfield $E$ of $\CC$ generated by the coefficients $a_n$ is $\QQ(i)$.   Take any prime $p\neq 3$.  We will see that  $\bbar{a}_p = \varepsilon(p)^{-1} a_p$.   Therefore, $a_p$ or $i a_p$ belongs to $\ZZ$ when $\varepsilon(p)$ is $1$ or $-1$, respectively, and hence $r_p=a_p^2/\varepsilon(p)$ is a square in $\ZZ$.   Therefore, $L=K=\QQ$.

 In \S\ref{SS:3 27}, we shall verify that Theorem~\ref{T:Ribet} holds with $S=\{2,3,5\}$.   Take any prime $\ell \geq 7$ and prime $\Lambda\subseteq\ZZ[i]$ dividing $\ell$.  
 Theorem~\ref{T:PSL2 vs GL2} with $L=K=\QQ$ implies that $\bbar\rho^\proj_\Lambda(G)$ is isomorphic to $\PSL_2(\FF_\ell)$.    The following theorem is now an immediate consequence (it is easy to prove directly for the group $\PSL_2(\FF_5)\cong A_5$).

\begin{theorem} \label{T:example Serre}
For each prime $\ell\geq 5$, there is a Galois extension $K/\QQ$ such that $\Gal(K/\QQ)$ is isomorphic to the simple group $\PSL_2(\FF_\ell)$.
\qed
\end{theorem}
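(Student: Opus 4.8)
The plan is to deduce Theorem~\ref{T:example Serre} directly from the machinery set up in the excerpt, applied to the specific weight~$3$ newform $f$ of level $27$, together with a separate elementary treatment of the prime $\ell=5$. For $\ell\geq 7$, I would proceed as follows. First, record the basic invariants of $f$: the coefficient field is $E=\QQ(i)$, and from the relation $\bar a_p=\varepsilon(p)^{-1}a_p$ (which I would justify by noting that $f$ and its complex conjugate $\bar f$ are both newforms of the same weight, level and nebentypus $\bar\varepsilon=\varepsilon^{-1}$, and that $\bar f$ has nebentypus $\varepsilon^{-1}$, forcing $\bar f = f\otimes\varepsilon^{-1}$, i.e.\ $\bar a_p = \varepsilon(p)^{-1}a_p$ for $p\nmid 3$) one gets that $r_p=a_p^2/\varepsilon(p)=a_p\bar a_p=|a_p|^2$ is a non-negative rational integer; in fact it is the square of the integer $a_p$ or $ia_p$ according as $\varepsilon(p)=1$ or $-1$. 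Hence $K=\QQ$ and, since every $r_p$ is already a square in $\QQ$, also $L=K=\QQ$.

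The second step is the input promised in \S\ref{SS:3 27}, namely that Theorem~\ref{T:Ribet} holds for this $f$ with the explicit exceptional set $S=\{2,3,5\}$ (this I would simply cite, as the excerpt says it will be verified there). Since $K=\QQ$, a ``prime $\lambda$ of $R=\ZZ$'' is just a rational prime $\ell$, and for $\ell\geq 7$ we have $\ell\notin S$. Fix such an $\ell$ and any prime $\Lambda$ of $\ZZ[i]$ above $\ell$; then $\lambda=\Lambda\cap\ZZ=(\ell)$, so $\FF_\lambda=\FF_\ell$, and Theorem~\ref{T:Ribet} tells us that $\bar\rho^{\proj}_\Lambda(G)$ is conjugate in $\PGL_2(\FF_\Lambda)$ to $\PSL_2(\FF_\ell)$ or $\PGL_2(\FF_\ell)$. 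Now invoke Theorem~\ref{T:PSL2 vs GL2}: the weight $k=3$ is odd, so part~(\ref{T:PSL2 vs GL2 i}) applies, and $\bar\rho^{\proj}_\Lambda(G)=\PSL_2(\FF_\ell)$ if and only if $\lambda$ splits completely in $L$; but $L=\QQ$, so this condition is vacuously satisfied. Therefore $\bar\rho^{\proj}_\Lambda(G)\cong\PSL_2(\FF_\ell)$. Taking $\mathcal{K}$ to be the fixed field of $\ker(\bar\rho^{\proj}_\Lambda)$ inside $\Qbar$, we obtain a Galois extension $\mathcal{K}/\QQ$ with $\Gal(\mathcal{K}/\QQ)\cong\PSL_2(\FF_\ell)$, which is the desired statement for all primes $\ell\geq 7$.

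It remains to handle $\ell=5$, and here I would not use the modular form at all but the exceptional isomorphism $\PSL_2(\FF_5)\cong A_5$: it is classical that $A_5$ (indeed every $A_n$) occurs as a Galois group over $\QQ$, e.g.\ via a trinomial or via the specialization of a generic polynomial, so a Galois extension with group $\PSL_2(\FF_5)$ exists. (Alternatively one could try to push the modular argument down to $\ell=5$ by re-examining whether $5$ can be removed from $S$ for this particular $f$, but the elementary route is cleaner and is exactly what the sentence preceding the theorem suggests.) Combining the two cases gives the theorem for all $\ell\geq 5$.

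The main obstacle is essentially bookkeeping rather than conceptual: one must be sure that the hypotheses of the two cited theorems are genuinely met for this $f$ — in particular that $f$ is non-CM (needed for Ribet's theorem and the whole setup), that the exceptional set $S=\{2,3,5\}$ is correct (deferred to \S\ref{SS:3 27}), and that the identification $r_p=a_p^2/\varepsilon(p)$ really is a perfect square in $\ZZ$ so that $L=K=\QQ$ and part~(\ref{T:PSL2 vs GL2 i}) of Theorem~\ref{T:PSL2 vs GL2} yields $\PSL_2$ rather than $\PGL_2$. Once these are in place the theorem follows formally, with the only genuinely ``separate'' ingredient being the well-known realizability of $A_5\cong\PSL_2(\FF_5)$ over $\QQ$.
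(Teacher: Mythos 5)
Your proposal is correct and follows essentially the same route as the paper: apply Theorem~\ref{T:Ribet} with the explicit set $S=\{2,3,5\}$ (computed in \S\ref{SS:3 27}) together with Theorem~\ref{T:PSL2 vs GL2}(\ref{T:PSL2 vs GL2 i}) and the fact that $L=K=\QQ$ to get $\PSL_2(\FF_\ell)$ for $\ell\geq 7$, and handle $\ell=5$ via $\PSL_2(\FF_5)\cong A_5$, exactly as the paper's parenthetical remark indicates. One small caveat on your sketch of $\bbar a_p=\varepsilon(p)^{-1}a_p$: having the same weight, level and nebentypus does not by itself force $\bbar f=f\otimes\varepsilon^{-1}$; the clean argument is the adjointness relation $\langle T_p f,g\rangle=\bbar\varepsilon(p)\langle f,T_p g\rangle$ for $p\nmid N$, but since the paper simply cites Ribet for this identity you can do the same.
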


\begin{remark}
\begin{romanenum}
\item
In \S5.5 of \cite{MR885783},  J-P.~Serre describes the image of $\bbar\rho_{(7)}$ and proves that it gives rise to a $\PSL_2(\FF_7)$-extension of $\QQ$, however, he does not consider the image modulo other primes.   Note that Serre was actually giving an example of his conjecture, so he started with the $\PSL_2(\FF_7)$-extension and then found the newform $f$. 
\item
Theorem~\ref{T:example Serre} was first proved by the author in \cite{Zywina-PSL2} by considering the Galois action on the second $\ell$-adic \'etale cohomology of a specific surface.   One can show that the Galois extensions of \cite{Zywina-PSL2} could also be constructed by first starting with an appropriate newform of weight $3$ and level $32$.
\end{romanenum}
\end{remark}

\subsection{Another example}  \label{SS:weight 3 level 160}
We now give an example with $K\neq \QQ$.   Additional details will be provided in \S\ref{SS:3 160}.  Let $f=\sum_n a_n q^n$ be a non-CM newform of weight $3$, level $N=160$ and nebentypus $\varepsilon(a)=\big(\frac{-5}{a}\big)$.  

Take $E$, $K$, $L$, $R$ and $\OO$ as in \S\ref{SS:general result}.   We will see in \S\ref{SS:3 160} that $E=K(i)$ and that $K$ is the unique cubic field in $\QQ(\zeta_{13})$.  We will also observe that $L=K$. 

Take any {odd} prime $\ell$ congruent to $\pm 2$, $\pm 3$, $\pm 4$ or $\pm 6$ modulo $13$.  Let $\Lambda$ be any prime ideal of $\OO$ dividing $\ell$ and set $\lambda= \Lambda \cap R$.    The assumption on $\ell$ modulo $13$ implies that $\lambda= \ell R$ and that $\FF_\lambda \cong \FF_{\ell^3}$.    In \S\ref{SS:3 160}, we shall compute a set $S$ as in Theorem~\ref{T:Ribet} which does not contain $\lambda$.   Theorem~\ref{T:PSL2 vs GL2} with $L=K$ implies that $\bbar\rho^\proj_\Lambda(G)$ is isomorphic to $\PSL_2(\FF_\lambda)\cong \PSL_2(\FF_{\ell^3})$.   The following is an immediate consequence.

\begin{theorem} 
If $\ell$ is an odd prime congruent to $\pm 2$, $\pm 3$, $\pm 4$ or $\pm 6$ modulo $13$, then the simple group $\PSL_2(\FF_{\ell^3})$ occurs as the Galois group of an extension of $\QQ$.
\end{theorem}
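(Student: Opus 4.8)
The plan is to apply Theorems~\ref{T:Ribet} and~\ref{T:PSL2 vs GL2} to the newform $f$ of weight $3$, level $N=160$ and nebentypus $\varepsilon(a)=\big(\tfrac{-5}{a}\big)$ singled out in \S\ref{SS:weight 3 level 160}. All the substantive input is the arithmetic bookkeeping of \S\ref{SS:3 160}: one must exhibit such a non-CM newform $f$, identify its coefficient field as $E=K(i)$ with $K$ the unique cubic subfield of $\QQ(\zeta_{13})$, check that $L=K$, and --- using the effective form of Ribet's theorem proved in \S\ref{S:effective Ribet} --- produce an explicit finite set $S$ of nonzero primes of $R$ (the ring of integers of $K$) outside of which the conclusion of Theorem~\ref{T:Ribet} holds, arranging that $S$ contains no prime of the form $\ell R$ with $\ell$ an odd prime satisfying $\ell\equiv\pm 2,\pm 3,\pm 4,\pm 6\pmod{13}$.

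Granting those facts, I would fix such a prime $\ell$, choose a prime $\Lambda$ of $\OO$ above $\ell$, and set $\lambda=\Lambda\cap R$. Since $K/\QQ$ is cyclic of degree $3$, it corresponds inside $\QQ(\zeta_{13})/\QQ$ to the unique index-$3$ subgroup $H$ of $(\ZZ/13\ZZ)^\times$; one checks $H=\{\pm 1,\pm 5\}$, whose nontrivial cosets are $\{\pm 2,\pm 3\}$ and $\{\pm 4,\pm 6\}$. Hence a prime $\ell\equiv\pm 2,\pm 3,\pm 4,\pm 6\pmod{13}$ has $\ell\bmod 13$ lying in a nontrivial coset of $H$, so its Frobenius in $\Gal(K/\QQ)\cong\ZZ/3\ZZ$ has order $3$, i.e.\ $\ell$ is inert in $K$. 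Therefore $\lambda=\ell R$ and $\FF_\lambda\cong\FF_{\ell^3}$. By the choice of $S$ we have $\lambda\notin S$, so Theorem~\ref{T:Ribet} shows that, after a suitable conjugation, $\bbar\rho_\Lambda^\proj(G)$ equals $\PSL_2(\FF_\lambda)$ or $\PGL_2(\FF_\lambda)$ inside $\PGL_2(\FF_\Lambda)$.

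It then remains to rule out $\PGL_2$. Since $k=3$ is odd and $L=K$, every prime of $R$ splits completely in $L$, so part~(\ref{T:PSL2 vs GL2 i}) of Theorem~\ref{T:PSL2 vs GL2} forces $\bbar\rho_\Lambda^\proj(G)=\PSL_2(\FF_\lambda)\cong\PSL_2(\FF_{\ell^3})$. Taking the fixed field in $\Qbar$ of $\ker\big(\bbar\rho_\Lambda^\proj\big)$ then yields a Galois extension of $\QQ$ whose Galois group is isomorphic to the simple group $\PSL_2(\FF_{\ell^3})$, which is the assertion of the theorem.

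The main obstacle is not this assembly but the deferred verifications of \S\ref{SS:3 160}: confirming that a non-CM newform of the prescribed level, weight and nebentypus exists; determining the fields $E$, $K$ and $L$ exactly, in particular the equality $L=K$ (precisely what makes part~(\ref{T:PSL2 vs GL2 i}) of Theorem~\ref{T:PSL2 vs GL2} deliver $\PSL_2$ rather than $\PGL_2$); and carrying out the effective Ribet computation accurately enough that the resulting $S$ misses every prime $\ell R$ with $\ell$ in the target congruence classes. Once those are in hand, the theorem follows formally from Theorems~\ref{T:Ribet} and~\ref{T:PSL2 vs GL2}.
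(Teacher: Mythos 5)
Your overall assembly is exactly the paper's: show $\ell$ is inert in the cubic field $K\subseteq\QQ(\zeta_{13})$ (your identification of the index-$3$ subgroup $\{\pm1,\pm5\}$ and its cosets $\{\pm2,\pm3\}$, $\{\pm4,\pm6\}$ is correct), so $\lambda=\ell R$ and $\FF_\lambda\cong\FF_{\ell^3}$; get largeness of $\bbar\rho_\Lambda^\proj(G)$ from the effective results of \S\ref{S:effective Ribet}; and then use $k=3$ odd together with $L=K$ in Theorem~\ref{T:PSL2 vs GL2}(\ref{T:PSL2 vs GL2 i}) to land in $\PSL_2(\FF_\lambda)$ rather than $\PGL_2(\FF_\lambda)$.

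There is, however, a gap in how you propose to handle the small primes. You plan to ``arrange'' the exceptional set $S$ so that it misses every prime $\ell R$ with $\ell$ in the target congruence classes, but this cannot be done with Theorem~\ref{T:effective version}: by construction its set $S$ contains \emph{every} prime of $R$ lying over a rational prime $\ell\leq\max(7,5k-4)=11$, and the primes $3$, $7$, $11$ (congruent to $3$, $-6$, $-2$ modulo $13$) all fall under the theorem you are proving. Your argument as written therefore only covers $\ell\geq 13$ in the stated classes. The paper closes this by going back to the finer criterion, Theorem~\ref{T:criteria}, and verifying its conditions (\ref{P:criteria a})--(\ref{P:criteria e}) directly for $\lambda=\ell R$ with $\ell\in\{3,7,11\}$ in \S\ref{SS:3 160}; this includes checking that $k_\lambda=\FF_\lambda$ (via the orders $\ZZ[r_3]$, $\ZZ[r_7]$ having $2$-power index in $R$, which makes conditions (\ref{P:criteria c})--(\ref{P:criteria e}) automatic since $\#\FF_\lambda=\ell^3$), and checking (\ref{P:criteria a}) and (\ref{P:criteria b}) with explicit auxiliary primes. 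Without some such separate treatment of $\ell\in\{3,7,11\}$ — note that the abstract Theorem~\ref{T:Ribet} gives no control over which primes lie in its finite set $S$ — your proof does not establish the full statement.
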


\subsection*{Acknowledgements} 
Thanks to Henri Darmon for pushing the author to find the modular interpretation of the Galois representations in \cite{Zywina-PSL2}.   Thanks also to Ravi Ramakrishna and Luis Dieulefait for their comments and corrections.   Computations were performed with \texttt{Magma} \cite{Magma}.

\section{The fields $K$ and $L$} \label{S:inner twists}

Take a newform $f$ with notation and assumptions as in \S\ref{SS:general result}.   

\subsection{The field $K$} 
\label{SS:inner twists}

Let $\Gamma$ be the set of automorphisms $\gamma$ of the field $E$ for which there is a primitive Dirichlet character $\chi_\gamma$ that satisfies
\begin{equation} \label{E:ap twist}
\gamma(a_p) = \chi_\gamma(p) a_p
\end{equation}
for all primes $p\nmid N$. The set of primes $p$ with $a_p\neq 0$ has density $1$ since $f$ is non-CM, so the image of $\chi_\gamma$ lies in $E^\times$ and the character $\chi_\gamma$ is uniquely determined from $\gamma$.  

Define $M$ to be $N$ or $4N$ if $N$ is odd or even, respectively.   The conductor of $\chi_\gamma$ divides $M$, cf.~\cite{MR617867}*{Remark~1.6}.  Moreover, there is a quadratic Dirichlet character $\alpha$ with conductor dividing $M$ and an integer $i$ such that $\chi_\gamma$ is the primitive character coming from $\alpha \varepsilon^i$, cf.~\cite{MR617867}*{Lemma~1.5(i)}.  

For each prime $p\nmid N$, we have $\bbar{a}_p = \varepsilon(p)^{-1} a_p$, cf.~\cite{MR0453647}*{p.~21}, so complex conjugation induces an automorphism $\gamma$ of $E$ and $\chi_\gamma$ is the primitive character coming from $\varepsilon$.   In particular, $\Gamma \neq 1$ if $\varepsilon$ is non-trivial.

\begin{remark}
More generally, we could have instead considered an embedding $\gamma\colon E \to \CC$ and a Dirichlet character $\chi_\gamma$ such that (\ref{E:ap twist}) holds for all sufficiently large primes $p$.   This gives the same twists, since $\gamma(E)=E$ and the character $\chi_\gamma$ is unramified at primes $p\nmid N$, cf.~\cite{MR617867}*{Remark~1.3}. 
\end{remark}

  The set $\Gamma$ is in fact an abelian subgroup of $\Aut(E)$, cf.~\cite{MR617867}*{Lemma~1.5(ii)}.  Denote by $E^{\Gamma}$ the fixed field of $E$ by $\Gamma$.   

 \begin{lemma} \label{L:fields agree}
 \begin{romanenum}
 \item \label{L:fields agree i}
We have $K=E^\Gamma$ and hence $\Gal(E/K)=\Gamma$.
 \item \label{L:fields agree ii}
 There is a prime $p\nmid N$ such that $K=\QQ(r_p)$.
 \end{romanenum}
 \end{lemma}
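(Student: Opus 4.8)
The plan is to prove (i) — via the two inclusions $K\subseteq E^\Gamma$ and $E^\Gamma\subseteq K$ — and then deduce (ii). The one input I would record first is the identity $\gamma\circ\varepsilon=\varepsilon\cdot\chi_\gamma^{\,2}$ for each $\gamma\in\Gamma$, i.e. $\gamma(\varepsilon(p))=\chi_\gamma(p)^2\varepsilon(p)$ for all $p\nmid N$: writing $\rho_{\gamma(f)}$ for the Deligne representation of the newform $\gamma(f):=\sum_n\gamma(a_n)q^n$ (whose nebentypus is $\gamma\circ\varepsilon$), the semisimple representations $\rho_{\gamma(f)}$ and $\rho_\Lambda\otimes\chi_\gamma$ have equal trace at every unramified $\Frob_p$ by (\ref{E:ap twist}), hence are isomorphic by the Chebotarev density theorem and the Brauer--Nesbitt theorem, and comparing determinants via (\ref{E:trace and det}) yields the identity. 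Granting this, for $\gamma\in\Gamma$ and $p\nmid N$ we get $\gamma(r_p)=\gamma(a_p)^2/\gamma(\varepsilon(p))=\chi_\gamma(p)^2a_p^2/(\chi_\gamma(p)^2\varepsilon(p))=r_p$, so $\Gamma$ fixes every $r_p$ and therefore $K\subseteq E^\Gamma$.

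For $E^\Gamma\subseteq K$, let $\widetilde E$ be the Galois closure of $E/\QQ$ and take an arbitrary $\sigma\in\Gal(\widetilde E/K)$; the goal is to show $\sigma$ fixes $E^\Gamma$ pointwise. The Galois conjugate $\sigma(f)=\sum_n\sigma(a_n)q^n$ is a newform with $p$-th coefficient $\sigma(a_p)$ and nebentypus $\sigma\circ\varepsilon$, so $\tr\bigl(\mathrm{ad}\,\rho_{\sigma(f)}(\Frob_p)\bigr)=\sigma(a_p)^2/(\sigma(\varepsilon(p))p^{k-1})=\sigma(r_p)/p^{k-1}$; since $r_p\in K$ is fixed by $\sigma$, this agrees with $\tr\bigl(\mathrm{ad}\,\rho_\Lambda(\Frob_p)\bigr)=r_p/p^{k-1}$ for almost all $p$, whence $\mathrm{ad}\,\rho_{\sigma(f)}\cong\mathrm{ad}\,\rho_\Lambda$ (Chebotarev and Brauer--Nesbitt again). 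As $f$ is non-CM, $\mathrm{ad}^0\rho_\Lambda$ is irreducible, and an irreducible two-dimensional representation is determined up to twist by its adjoint, so $\rho_{\sigma(f)}\cong\rho_\Lambda\otimes\chi_\sigma$ for some character $\chi_\sigma$ of $G$; comparing determinants gives $\chi_\sigma^{\,2}=(\sigma\circ\varepsilon)\cdot\varepsilon^{-1}$. This is a power of $\varepsilon$, since $\sigma$ acts on the group $\mu_n$ of values of $\varepsilon$ ($n$ the order of $\varepsilon$) by a power map; because $\mu_n\subseteq E$ and $-1\in E$, an elementary root-of-unity check then shows the cyclic group $\chi_\sigma(G)$ — whose order divides twice the order of $\chi_\sigma^{\,2}$ — lies in $E$. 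Hence $\sigma(a_p)=\chi_\sigma(p)a_p\in E$ for all $p\nmid N$, so $\sigma(E)=E$ for degree reasons and $\sigma|_E\in\Aut(E)$ is an inner twist, i.e.\ $\sigma|_E\in\Gamma$; in particular $\sigma$ fixes $E^\Gamma$. As $\sigma$ was arbitrary, $E^\Gamma\subseteq\widetilde E^{\Gal(\widetilde E/K)}=K$, so $K=E^\Gamma$; and $\Gal(E/K)=\Gamma$ follows from Artin's theorem since $E/E^\Gamma$ is Galois with group $\Gamma$.

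For (ii), note that the argument above shows $\sigma(f)$ is a twist of $f$ by a Dirichlet character precisely when $\sigma$ fixes $K$. So for any $\sigma\in\Gal(\widetilde E/\QQ)$ not fixing $K$, the forms $f$ and $\sigma(f)$ are not twist-equivalent, hence $\mathrm{ad}^0\rho_\Lambda$ and $\mathrm{ad}^0\rho_{\sigma(f)}$ are non-isomorphic irreducible representations, and their traces $r_p/p^{k-1}$ and $\sigma(r_p)/p^{k-1}$ disagree on a set of primes of positive density. By (i), $K=\QQ(\{r_p\}_{p\nmid N})$ has only finitely many proper subfields; running over them and applying the Chebotarev density theorem to the direct sum of the finitely many distinct Galois conjugates of $\mathrm{ad}^0\rho_\Lambda$, one finds a single prime $p\nmid N$ at which all these conjugates have pairwise distinct traces at $\Frob_p$ — equivalently, for which $r_p$ has $[K:\QQ]$ distinct conjugates over $\QQ$. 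For such $p$ we get $\QQ(r_p)=K$, proving (ii).

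I expect the main obstacle to be the reverse inclusion in (i), and within it the step showing that the twisting character $\chi_\sigma$ produced from an arbitrary $\sigma\in\Gal(\widetilde E/K)$ actually takes values in $E$; without it one only knows $\sigma(a_p)^2\in E$ and cannot conclude that $\sigma$ restricts to an element of $\Gamma$. The relation $\chi_\sigma^{\,2}=(\sigma\circ\varepsilon)/\varepsilon$, combined with $-1\in E$, is exactly what carries the root-of-unity bookkeeping, but this is the delicate point. A secondary point is the single-prime extraction in (ii): one must ensure that the trace-coincidence sets attached to the various proper subfields of $K$ all have density strictly below $1$, so that together they cannot exhaust all primes — which is why it matters that $f$ and its genuine Galois twists are non-twist-equivalent and not merely distinct.
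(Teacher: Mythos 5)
Your forward inclusion $K\subseteq E^\Gamma$ is exactly the paper's (using $\chi_\gamma(p)^2=\gamma(\varepsilon(p))/\varepsilon(p)$), but for the reverse inclusion and for (ii) you take a genuinely different route: the paper simply cites the proof of Ribet's Theorem~3.1 of \cite{MR819838}, which already produces a positive-density set of degree-one places $v$ of the abelian extension $\calK = \Qbar^{\cap_\gamma\ker\widetilde\chi_\gamma}$ with $E^\Gamma=\QQ(a_v^2)$; taking such a $v$ above a rational prime $p$ splitting completely in $\calK$ gives $\varepsilon(p)=1$ and hence $E^\Gamma=\QQ(r_p)$, which settles both $E^\Gamma\subseteq K$ and (ii) in one stroke. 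You instead try to re-derive this from the representation theory: adjoint determines a two-dimensional irreducible up to twist, so $\rho_{\sigma(f)}\cong\rho_\Lambda\otimes\chi_\sigma$, etc. That re-derivation is morally sound and arguably more transparent, but as you yourself anticipate, it has a genuine gap at precisely the point you flag.

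The gap is the claim that $\chi_\sigma(G)\subseteq E^\times$ follows from $\chi_\sigma^2=(\sigma\circ\varepsilon)\varepsilon^{-1}$, $\mu_n\subseteq E$ and $-1\in E$. Writing $m=\mathrm{ord}(\chi_\sigma)$, you only get $m\mid 2n$ where $n=\mathrm{ord}(\varepsilon)$. If $n$ is odd, then $\mu_{2n}=\mu_2\cdot\mu_n\subseteq E$ and you are done; but if $4\mid n$ (say $n=2^b n'$ with $b\geq 2$) and $m$ is divisible by $2^{b+1}$, then $\mu_m\subseteq E$ requires a primitive $2^{b+1}$-th root of unity in $E$, which neither $\mu_n\subseteq E$ nor $-1\in E$ provides. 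Your parenthetical ``whose order divides twice the order of $\chi_\sigma^2$'' is vacuous rather than restrictive. So without further input you only know $\chi_\sigma(p)^2\in E$, not $\chi_\sigma(p)\in E$, and you cannot conclude $\sigma(a_p)\in E$, $\sigma(E)=E$, or $\sigma|_E\in\Gamma$. Closing this is exactly the content of Momose's Lemma~1.5(i) and Remark~1.3 (which the paper invokes in the surrounding text and which Ribet's cited theorem relies on): the structure result $\chi_\gamma=\alpha\,\varepsilon^i$ with $\alpha$ quadratic is proved there by comparing levels/conductors of $\sigma(f)$ and the newform attached to $f\otimes\chi_\sigma$, not by root-of-unity bookkeeping. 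You would need to import that result (or reprove it), at which point you are essentially back to citing Momose--Ribet as the paper does. A secondary, lesser issue, which you also flag, is the single-prime extraction in (ii): asserting that a finite union of density-$<1$ trace-coincidence sets misses a positive-density set requires an argument (e.g.\ that each such set is a proper $\ell$-adic analytic subset of the image, hence Haar-measure zero), not just a density count, since finitely many sets each of density $<1$ can still cover all primes.
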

 \begin{proof} 
Take any $p\nmid N$.  For each $\gamma\in \Gamma$, we have
 \[
 \gamma(r_p) = \gamma(a_p^2)/\gamma(\varepsilon(p)) = \chi_\gamma(p)^2 a_p^2/\gamma(\varepsilon(p)) = a_p^2/\varepsilon(p) = r_p,
 \] 
where we have used that $\chi_\gamma(p)^2 = \gamma(\varepsilon(p))/\varepsilon(p)$, cf.~\cite{MR617867}*{proof of Lemma~1.5(ii)}.  This shows that $r_p$ belong in $E^\Gamma$ and hence $K\subseteq E^\Gamma$ since $p\nmid N$ was arbitrary.    To complete the proof of the lemma, it thus suffices to show that $E^\Gamma=\QQ(r_p)$ for some prime $p\nmid N$.
 
For $\gamma\in \Gamma$, let $\widetilde\chi_\gamma\colon G \to \CC^\times$ be the continuous character such that $\widetilde\chi_\gamma(\Frob_p)=\chi_\gamma(p)$ for all $p\nmid N$.  Define the group $H = \bigcap_{\gamma\in \Gamma} \ker \widetilde\chi_\gamma$; it is an open normal subgroup of $G$ with $G/H$ is abelian.   Let $\calK$ be the subfield of $\Qbar$ fixed by $H$; it is a finite abelian extension of $\QQ$.   

Fix a prime $\ell$ and a prime ideal $\Lambda | \ell$ of $\OO$.  In the proof of Theorem~3.1 of \cite{MR819838}, Ribet proved that $E^\Gamma=\QQ(a_v^2)$ for a positive density set of finite place $v\nmid N\ell$ of $\calK$, where $a_v:=\tr(\rho_\Lambda(\Frob_v))$.    There is thus a finite place $v\nmid N\ell$ of $\calK$ of degree $1$ such that $E^\Gamma=\QQ(a_v^2)$.   We have $a_v=a_p$, where $p$ is the rational prime that $v$ divides, so $E^\Gamma=\QQ(a_p^2)$.      Since $v$ has degree $1$ and $\calK/\QQ$ is abelian, the prime $p$ must split completely in $\calK$ and hence $\chi_\gamma(p)=1$ for all $\gamma\in \Gamma$; in particular, $\varepsilon(p)=1$.   Therefore, $E^\Gamma=\QQ(r_p)$.
 \end{proof}

\subsection{The field $L$}

 Recall that we defined $L$ to be the extension of $K$ in $\CC$ obtained by adjoining the square root of $r_p = a_p^2/\varepsilon(p)$ for all $p\nmid N$.    The following allows one to find a finite set of generators for the extension $L/K$ and gives a way to check the criterion of Theorem~\ref{T:PSL2 vs GL2}.
 
 \begin{lemma} \label{L:L description}
\begin{romanenum}
 \item \label{L:L description i}
Choose primes $p_1,\ldots, p_m \nmid N$ that generate the group $(\ZZ/M\ZZ)^\times$ and satisfy $r_{p_i}\neq 0$ for all $1\leq i\leq m$.  Then $L=K(\sqrt{r_{p_1}},\ldots,\sqrt{r_{p_m}})$.
 \item \label{L:L description ii}
 Take any non-zero prime ideal $\lambda$ of $R$ that does not divide $2$.  Let $p_1,\ldots, p_m$ be primes as in (\ref{L:L description i}).  Then the following are equivalent:
 \begin{alphenum}
 \item $
 \lambda$ splits completely in $L$,
 \item 
 for all $p\nmid N$, $r_p$ is a square in $K_\lambda$,
 \item 
  for all $1\leq i \leq m$, $r_{p_i}$ is a square in $K_\lambda$.
 \end{alphenum}
 \end{romanenum}
 \end{lemma}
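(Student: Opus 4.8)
The plan is to prove (\ref{L:L description i}) first and to derive (\ref{L:L description ii}) from it via the standard dictionary between ``$\lambda$ splits completely in $F$'' and ``$F$ embeds into $K_\lambda$''. For (\ref{L:L description i}) the inclusion $K(\sqrt{r_{p_1}},\ldots,\sqrt{r_{p_m}})\subseteq L$ is immediate, so the content is the reverse inclusion: for every prime $p\nmid N$ the class of $r_p$ in $K^\times/(K^\times)^2$ should lie in the subgroup generated by the classes of the $r_{p_i}$. The key input is that the inner-twist relation (\ref{E:ap twist}) propagates to composite indices, i.e.\ $\gamma(a_n)=\chi_\gamma(n)\,a_n$ for all $\gamma\in\Gamma$ and all $n$ with $\gcd(n,N)=1$; this follows from the prime case together with the multiplicativity of the Fourier coefficients of $f$ and the Hecke recursion $a_{p^{e+1}}=a_pa_{p^e}-\varepsilon(p)p^{k-1}a_{p^{e-1}}$ (using also $\gamma(\varepsilon(p))=\chi_\gamma(p)^2\varepsilon(p)$), and is in any case recorded in \cite{MR617867}. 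Granting it, I record the comparison step: if $n\equiv m\pmod M$ are prime to $M$ with $a_n,a_m\neq 0$, then $\varepsilon(n)=\varepsilon(m)$ (as $N\mid M$) and $\chi_\gamma(n)=\chi_\gamma(m)$ (as $\chi_\gamma$ has conductor dividing $M$), so $r_n/r_m=(a_n/a_m)^2$ with $a_n/a_m\in E$ fixed by every $\gamma\in\Gamma$; Lemma~\ref{L:fields agree}(\ref{L:fields agree i}) then forces $a_n/a_m\in K=E^\Gamma$, and $r_n,r_m$ themselves lie in $K^\times$ by the computation in the proof of that lemma, so $r_n$ and $r_m$ have the same class in $K^\times/(K^\times)^2$.

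Since $f$ is non-CM, the primes with $a_p=0$ have density $0$, so each class $c\in(\ZZ/M\ZZ)^\times$ is represented by a prime $q\nmid N$ with $a_q\neq 0$; such a $q$ is automatically prime to $M$, since a prime dividing $M$ but not $N$ would force $N$ even and $q=2$, which is absurd. Writing $[\,\cdot\,]$ for the class modulo squares, define $\Phi\colon(\ZZ/M\ZZ)^\times\to K^\times/(K^\times)^2$ by $\Phi(c)=[r_q]$ for any such $q$; the comparison step makes this well defined, and $\Phi$ is a homomorphism because, choosing primes $q,q'$ representing $c,c'$ to be distinct, the product $qq'$ represents $cc'$ and $r_{qq'}=r_qr_{q'}$ gives $\Phi(c)\Phi(c')=[r_qr_{q'}]=[r_{qq'}]=\Phi(cc')$, the last equality by the comparison step applied to $qq'$. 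The classes of $p_1,\ldots,p_m$ generate $(\ZZ/M\ZZ)^\times$ and $\Phi$ sends the class of $p_i$ to $[r_{p_i}]$, so the image of $\Phi$ is $\langle[r_{p_1}],\ldots,[r_{p_m}]\rangle$; and $\Phi$ sends the class of $p$ to $[r_p]$ for every prime $p\nmid N$ with $a_p\neq 0$, so $[r_p]\in\langle[r_{p_1}],\ldots,[r_{p_m}]\rangle$, i.e.\ $\sqrt{r_p}\in K(\sqrt{r_{p_1}},\ldots,\sqrt{r_{p_m}})$. As the case $r_p=0$ is trivial, this yields $L=K(\sqrt{r_{p_1}},\ldots,\sqrt{r_{p_m}})$.

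For (\ref{L:L description ii}), recall that $L/K$ is finite. (a)$\Rightarrow$(b): if $\lambda$ splits completely in $L$, it splits completely in every intermediate field, in particular in $K(\sqrt{r_p})$ for each $p\nmid N$, which is precisely the statement that $r_p$ is a square in $K_\lambda$ (vacuously so when $r_p=0$). (b)$\Rightarrow$(c) is trivial. (c)$\Rightarrow$(a): by (\ref{L:L description i}) the $K_\lambda$-algebra $L\otimes_K K_\lambda$ is a quotient of $K_\lambda[X_1,\ldots,X_m]/(X_1^2-r_{p_1},\ldots,X_m^2-r_{p_m})$; since $\lambda\nmid 2$ and each $r_{p_i}$ is a nonzero square in $K_\lambda$, every $X_i^2-r_{p_i}$ has two distinct roots in $K_\lambda$, so this algebra is isomorphic to $K_\lambda^{2^m}$, and a quotient $K_\lambda$-algebra of a finite product of copies of $K_\lambda$ is again such a product; hence $L\otimes_K K_\lambda\cong K_\lambda^{[L:K]}$, i.e.\ $\lambda$ splits completely in $L$.

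The step I expect to be the real obstacle is the passage in (\ref{L:L description i}) from the prime relation $\gamma(a_p)=\chi_\gamma(p)a_p$ to a statement usable for composite $n$, since that is what makes $\Phi$ simultaneously well defined and multiplicative. The nuisance is that $a_{p^e}$ can vanish even when $a_p\neq 0$, so one cannot simply raise a single prime to a power to represent an arbitrary class in $(\ZZ/M\ZZ)^\times$; hence $\Phi$ must be assembled from honest primes lying in distinct residue classes, using the multiplicativity of $n\mapsto r_n$ on coprime arguments. Part (\ref{L:L description ii}) is then routine, the only delicate points being the hypotheses $\lambda\nmid 2$ and $r_{p_i}\neq 0$, which are precisely what make the relevant $K_\lambda$-algebra \'etale.
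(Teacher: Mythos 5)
Your argument is correct, and both parts (\ref{L:L description i}) and (\ref{L:L description ii}) are established. But for (\ref{L:L description i}) you take a somewhat heavier route than the paper does, and it is worth seeing why. You package the inner-twist invariance into a homomorphism $\Phi\colon(\ZZ/M\ZZ)^\times\to K^\times/(K^\times)^2$; to make $\Phi$ multiplicative you must pass through honest composite indices (you compare $r_{qq'}$ with $r_{q''}$), which forces you to invoke multiplicativity of the Fourier coefficients $a_{mn}=a_ma_n$ at coprime arguments and to worry about the (genuine, but here irrelevant) possibility that $a_{p^e}=0$ even when $a_p\neq 0$. The paper instead fixes the prime $p$, writes $p\equiv \prod_i p_i^{e_i}\pmod M$, and directly computes $\gamma\big(a_p/\prod_i a_{p_i}^{e_i}\big)$; because this quantity is a \emph{formal} product of powers of the individual prime coefficients $a_{p_i}$ — not the Fourier coefficient $a_n$ of a composite integer — the calculation uses only the prime inner-twist relation $\gamma(a_p)=\chi_\gamma(p)a_p$ together with multiplicativity of the field automorphism $\gamma$ and the Dirichlet character $\chi_\gamma$. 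That completely sidesteps both the multiplicativity of $a_n$ and the $a_{p^e}=0$ concern you flag at the end as ``the real obstacle''; in the paper's version there is no obstacle at all. The two proofs yield the same identity $[r_p]=\prod_i[r_{p_i}]^{e_i}$, so the conceptual content is identical; your $\Phi$ is a nice structural gloss, but the direct computation is leaner. Your treatment of (\ref{L:L description ii}), via $L\otimes_K K_\lambda$ as an \'etale quotient algebra, is a correct and standard fleshing-out of what the paper simply calls an easy consequence, and it makes good use of the hypotheses $\lambda\nmid 2$ and $r_{p_i}\neq 0$.
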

\begin{proof}
Take any prime $p\nmid N$.   To prove part (\ref{L:L description i}), it suffices to show that $\sqrt{r_p}$ belongs to the field $L':=K(\sqrt{r_{p_1}},\ldots,\sqrt{r_{p_m}})$.   This is obvious if $r_p=0$, so assume that $r_p\neq 0$.  Since the $p_i$ generate $(\ZZ/M\ZZ)^\times$ by assumption, there are integers $e_i\geq 0$ such that $p\equiv p_1^{e_1} \cdots p_m^{e_m} \pmod{M}$.   Take any $\gamma\in \Gamma$.  Using that the conductor of $\chi_\gamma$ divides $M$ and  (\ref{E:ap twist}), we have
\[
\gamma\Big( \frac{a_p}{{\prod}_i a_{p_i}^{e_i}} \Big) =\frac{\chi_\gamma(p)}{\chi_\gamma({\prod}_i p_i^{e_i})} \cdot \frac{a_p}{{\prod}_i a_{p_i}^{e_i}} = \frac{\chi_\gamma(p)}{\chi_\gamma(p)}\cdot \frac{a_p}{{\prod}_i a_{p_i}^{e_i}} = \frac{a_p}{{\prod}_i a_{p_i}^{e_i}},
\]
Since $E^\Gamma =K$ by Lemma~\ref{L:fields agree}(\ref{L:fields agree i}), the value $a_p/{\prod}_i a_{p_i}^{e_i}$ belongs to $K$; it is non-zero since $r_p\neq 0$ and $r_{p_i}\neq 0$.     We have $\varepsilon(p) = \prod_i \varepsilon(p_i)^{e_i}$ since the conductor of $\varepsilon$ divides $M$.   Therefore,
\[
\frac{r_p}{{\prod}_i r_{p_i}^{e_i}} = \frac{a_p^2}{{\prod}_i (a_{p_i}^2)^{e_i}} = \bigg(\frac{a_p}{{\prod}_i a_{p_i}^{e_i}}\bigg)^2 \in (K^\times)^2.
\]
This shows that $\sqrt{r_p}$ is contained in $L'$ as desired.  This proves (\ref{L:L description i}); part (\ref{L:L description ii}) is an easy consequence of (\ref{L:L description i}).
\end{proof}

\begin{remark}
Finding primes $p_i$ as in Lemma~\ref{L:L description}(\ref{L:L description i}) is straightforward since $r_p\neq 0$ for all $p$ outside a set of density $0$ (and the primes representing each class $a\in (\ZZ/M\ZZ)^\times$ have positive density).  Lemma~\ref{L:L description}(\ref{L:L description ii}) gives a straightforward way to check if $\lambda$ splits completely in $L$.   Let $e_i$ be the $\lambda$-adic valuation of $r_{p_i}$ and let $\pi$ be a uniformizer of $K_\lambda$; then $r_{p_i}$ is a square in $K_\lambda$ if and only if $e$ is even and the image of $r_{p_i}/\pi^{e_i}$ in $\FF_\lambda$ is a square.
\end{remark}

\section{Proof of Theorem~\ref{T:PSL2 vs GL2}}
We may assume that $\bbar\rho_\Lambda^\proj(G)$ is $\PSL_2(\FF_\lambda)$ or $\PGL_2(\FF_\lambda)$.   For any $n\geq 1$, the group $\GL_2(\FF_{2^n})$ is generated by $\SL_2(\FF_{2^n})$ and its scalar matrices, so $\PSL_2(\FF_{2^n})=\PGL_2(\FF_{2^n})$.   The theorem is thus trivial when $\ell=2$, so we may assume that $\ell$ is odd.

Take any $\alpha\in \PGL_2(\FF_\lambda)\subseteq \PGL_2(\FF_\Lambda)$ and choose any matrix $A\in \GL_2(\FF_\Lambda)$ whose image in $\PGL_2(\FF_\Lambda)$ is $\alpha$.  The value $\tr(A)^2/\det(A)$ does not depend on the choice of $A$ and lies in $\FF_\lambda$ (since we can choose $A$ in $\GL_2(\FF_\lambda)$); by abuse of notation, we denote this common value by $\tr(\alpha)^2/\det(\alpha)$.

\begin{lemma}  \label{L:PSL2 p criterion}
Suppose that $p\nmid N\ell$ is a prime for which $r_p \not\equiv 0\pmod{\lambda}$.   Then $\bbar\rho_\Lambda^\proj(\Frob_p)$ is contained in $\PSL_2(\FF_\lambda)$ if and only if the image of $a_p^2/(\varepsilon(p) p^{k-1})=r_p/p^{k-1}$ in $\FF_\lambda^\times$ is a square.
\end{lemma}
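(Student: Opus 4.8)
The plan is to reduce the statement to a concrete computation in $\GL_2(\FF_\Lambda)$ using the criterion that an element of $\GL_2(\FF_\Lambda)$ lies (projectively) in $\PSL_2(\FF_\lambda)$ precisely when it is the image of a matrix of determinant a square in $\FF_\lambda^\times$. More precisely, first I would recall the standard fact that for a field $\FF$ of odd characteristic, the subgroup $\PSL_2(\FF)$ of $\PGL_2(\FF)$ is exactly the image of the matrices $A \in \GL_2(\FF)$ with $\det(A) \in (\FF^\times)^2$; this is because $\PGL_2(\FF)/\PSL_2(\FF) \cong \FF^\times/(\FF^\times)^2$ via the determinant. Applying this with $\FF = \FF_\lambda$, and using that we have already arranged $\bbar\rho_\Lambda^\proj(G) \subseteq \PGL_2(\FF_\lambda)$, the element $\bbar\rho_\Lambda^\proj(\Frob_p)$ lies in $\PSL_2(\FF_\lambda)$ if and only if some lift of it to $\GL_2(\FF_\lambda)$ has square determinant.

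The second step is to connect this with the invariant $\tr(\alpha)^2/\det(\alpha)$ introduced just before the lemma, and with $r_p$. Since $p \nmid N\ell$, the matrix $\bbar\rho_\Lambda(\Frob_p) \in \GL_2(\FF_\Lambda)$ is a lift of $\alpha := \bbar\rho_\Lambda^\proj(\Frob_p)$, and by reducing (\ref{E:trace and det}) modulo $\Lambda$ it has trace $a_p \bmod \Lambda$ and determinant $\varepsilon(p)p^{k-1} \bmod \Lambda$. The hypothesis $r_p \not\equiv 0 \pmod \lambda$ (equivalently $a_p \not\equiv 0 \pmod\Lambda$, since $\varepsilon(p)$ is a unit) guarantees $\tr(\alpha) \neq 0$, so the quantity $\tr(\alpha)^2/\det(\alpha) = a_p^2/(\varepsilon(p)p^{k-1}) = r_p/p^{k-1}$ is a well-defined element of $\FF_\lambda^\times$. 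I would then observe that scaling a nonzero-trace matrix $A$ by $c \in \FF_\lambda^\times$ multiplies $\det(A)$ by $c^2$, so whether a lift of $\alpha$ (with entries in $\FF_\lambda$, which exists since $\alpha \in \PGL_2(\FF_\lambda)$) has square determinant depends only on the class of $\det(A)$ in $\FF_\lambda^\times/(\FF_\lambda^\times)^2$; and since $\tr(A) \neq 0$ we may normalize $\tr(A) = 1$, whence $\det(A) \equiv \tr(A)^2/\det(A)^{-1}$... more cleanly: $\det(A)$ and $\tr(A)^2/\det(A)$ differ by the square $(\tr(A)^2/\det(A)^2)\cdot\det(A)^2$—I would just note $\det(A) \cdot \big(\tr(A)^2/\det(A)\big) = \tr(A)^2 \in (\FF_\lambda^\times)^2$, so $\det(A)$ is a square if and only if $\tr(A)^2/\det(A)$ is a square.

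Combining: $\bbar\rho_\Lambda^\proj(\Frob_p) \in \PSL_2(\FF_\lambda)$ iff a lift in $\GL_2(\FF_\lambda)$ has square determinant iff $\tr(\alpha)^2/\det(\alpha) = r_p/p^{k-1}$ is a square in $\FF_\lambda^\times$, which is the claim. I do not expect any serious obstacle here; the only points requiring a little care are (i) the existence of a lift of $\alpha$ with entries in $\FF_\lambda$ rather than merely $\FF_\Lambda$—which follows from $\alpha \in \PGL_2(\FF_\lambda)$ together with the surjectivity of $\GL_2(\FF_\lambda) \to \PGL_2(\FF_\lambda)$—and (ii) making sure the well-definedness of $\tr(\alpha)^2/\det(\alpha)$ (already asserted in the text) is used consistently so that the final equivalence is independent of all choices. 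The identity $r_p/p^{k-1} = a_p^2/(\varepsilon(p)p^{k-1})$ is immediate from the definition $r_p = a_p^2/\varepsilon(p)$.
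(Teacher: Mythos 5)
Your proposal is correct and follows essentially the same route as the paper: both use the determinant-induced homomorphism $\PGL_2(\FF_\lambda)\to \FF_\lambda^\times/(\FF_\lambda^\times)^2$ with kernel $\PSL_2(\FF_\lambda)$, lift $\alpha=\bbar\rho_\Lambda^\proj(\Frob_p)$ to a matrix $A_0\in\GL_2(\FF_\lambda)$, and note that $\det(A_0)\cdot\bigl(\tr(A_0)^2/\det(A_0)\bigr)=\tr(A_0)^2$ is a square, so $\det(A_0)$ and $r_p/p^{k-1}$ lie in the same square class. The paper merely packages this as a quadratic character $\xi$ of $G$, which is cosmetic.
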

\begin{proof}
Define $A:= \bbar\rho_{\Lambda}(\Frob_p)$ and $\alpha:=\bbar\rho_\Lambda(\Frob_p)$; the image of $A$ in $\PGL_2(\FF_\Lambda)$ is $\alpha$.   The value $\xi_p:=\tr(\alpha)^2/\det(\alpha) = \tr(A)^2/\det(A)$ agrees with the image of $a_p^2/(\varepsilon(p)p^{k-1})= r_p/p^{k-1}$ in $\FF_\Lambda$.     Since $r_p \in R$ is non-zero modulo $\lambda$ by assumption, the value $\xi_p$ lies in $\FF_\lambda^\times$.    Fix a matrix $A_0 \in \GL_2(\FF_\lambda)$ whose image in $\PGL_2(\FF_\lambda)$ is $\alpha$;  we have $\xi_p = \tr(A_0)^2/\det(A_0)$.   Since $\xi_p \neq 0$, we find that $\xi_p$ and $\det(A_0)$ lies in the same coset in $\FF_\lambda^\times/(\FF_\lambda^\times)^2$.

The determinant gives rise to a homomorphism $d\colon \PGL_2(\FF_\lambda) \to \FF_\lambda^\times/( \FF_\lambda^\times)^2$ whose kernel is $\PSL_2(\FF_\lambda)$.   Define the character 
\[
\xi \colon G \xrightarrow{\bbar\rho_\Lambda^\proj} \PGL_2(\FF_\lambda) \xrightarrow{d} \FF_\lambda^\times/(\FF_\lambda^\times)^2.
\]
We have $\xi(\Frob_p) = \det(A_0) \cdot ( \FF_\lambda^\times)^2 = \xi_p \cdot ( \FF_\lambda^\times)^2$.   So $\xi(\Frob_p)=1$, equivalently $\bbar\rho_\Lambda^\proj(\Frob_p) \in \PSL_2(\FF_\lambda)$, if and only if $\xi_p \in \FF_\lambda^\times$ is a square.  
\end{proof}

Let $M$ be the integer from \S\ref{SS:inner twists}.

\begin{lemma} \label{L:rp progressions}
For each $a\in (\ZZ/M\ell\ZZ)^\times$, there is a prime $p\equiv a \pmod{M\ell}$ such that $r_p \not\equiv 0 \pmod{\lambda}$.
\end{lemma}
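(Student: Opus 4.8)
First I would translate the desired conclusion into a statement about the residual representation. For a prime $p \equiv a \pmod{M\ell}$ with $a \in (\ZZ/M\ell\ZZ)^\times$ we automatically have $p \nmid M\ell$, hence $p \nmid N\ell$, so $\varepsilon(p)$ is a $\Lambda$-adic unit and (as in the proof of Lemma~\ref{L:PSL2 p criterion}) $r_p/p^{k-1} \equiv \tr(A)^2/\det(A) \pmod{\Lambda}$ with $A := \bbar\rho_\Lambda(\Frob_p) \in \GL_2(\FF_\Lambda)$. Since $\FF_\Lambda$ is a field and $r_p \in R$ with $\lambda = \Lambda \cap R$, the condition $r_p \not\equiv 0 \pmod{\lambda}$ is equivalent to $\tr(\bbar\rho_\Lambda(\Frob_p)) \neq 0$. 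So it is enough to show: for each $a \in (\ZZ/M\ell\ZZ)^\times$ there is a prime $p$ with $p \equiv a \pmod{M\ell}$ and $\tr(\bbar\rho_\Lambda(\Frob_p)) \neq 0$.

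Let $\chi \colon G \to (\ZZ/M\ell\ZZ)^\times$ be the cyclotomic character with $\chi(\Frob_p) \equiv p \pmod{M\ell}$ for $p \nmid M\ell$; it is surjective, with kernel $G' := \Gal(\Qbar/\QQ(\mu_{M\ell}))$, and $G/G'$ is abelian. The plan is to combine $\bbar\rho_\Lambda$ and $\chi$ and apply the Chebotarev density theorem; for that I first want to produce \emph{some} $g \in G$ with $\chi(g) = a$ and $\tr(\bbar\rho_\Lambda(g)) \neq 0$. Fix $g_0$ with $\chi(g_0) = a$ (possible since $\chi$ is surjective), so that $\{g : \chi(g) = a\} = g_0 G'$. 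If no such $g$ existed, then $\tr(\bbar\rho_\Lambda(g_0)\, \bbar\rho_\Lambda(h)) = 0$ for all $h \in G'$, so the $\FF_\Lambda$-linear functional $X \mapsto \tr(\bbar\rho_\Lambda(g_0) X)$ on $\M_2(\FF_\Lambda)$ would vanish on the $\FF_\Lambda$-span of $\bbar\rho_\Lambda(G')$. Hence it suffices to prove that this span is all of $\M_2(\FF_\Lambda)$: then the functional is identically zero, and nondegeneracy of the trace pairing forces $\bbar\rho_\Lambda(g_0) = 0$, a contradiction.

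The heart of the argument is thus to show that $\bbar\rho_\Lambda|_{G'}$ is absolutely irreducible — equivalently, by Burnside's theorem, that the $\FF_\Lambda$-span of $\bbar\rho_\Lambda(G')$ equals $\M_2(\FF_\Lambda)$. Here I would use the standing assumptions of this section, namely that $\ell$ is odd and $\bbar\rho_\Lambda^\proj(G)$ is $\PSL_2(\FF_\lambda)$ or $\PGL_2(\FF_\lambda)$. Put $N := \bbar\rho_\Lambda^\proj(G')$; it is normal in $\bbar\rho_\Lambda^\proj(G)$ with abelian quotient (a quotient of $G/G'$), and since $\PSL_2(\FF_\lambda)$ and $\PGL_2(\FF_\lambda)$ are non-abelian for $\ell$ odd we get $N \neq 1$. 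Now a non-trivial normal subgroup $N$ of $\PSL_2(\FF_\lambda)$ or $\PGL_2(\FF_\lambda)$ cannot fix a point of $\PP^1(\FFbar_\Lambda)$: if it fixed $x$ it would fix $gx$ for every $g$ in the ambient group (as $N$ is normal), hence the whole orbit of $x$; but for $\ell$ odd every $\PSL_2(\FF_\lambda)$- or $\PGL_2(\FF_\lambda)$-orbit on $\PP^1(\FFbar_\Lambda)$ has at least three points (immediate from the description of point stabilizers as maximal tori, checked separately for $\FF_\lambda$-rational, quadratic, and higher-degree points), and a M\"obius transformation with three fixed points is the identity. Since $N$ fixes no point of $\PP^1(\FFbar_\Lambda)$, the base change $\FFbar_\Lambda^2$ has no $G'$-stable line, i.e.\ $\bbar\rho_\Lambda|_{G'}$ is absolutely irreducible, as needed. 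I expect this orbit-size bookkeeping — in particular handling the smallest residue fields, such as $\FF_\lambda = \FF_3$ where $\PSL_2$ fails to be simple — to be the only genuinely delicate point.

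Finally I would run Chebotarev. Having produced $g_1 \in G$ with $\chi(g_1) = a$ and $\tr(\bbar\rho_\Lambda(g_1)) \neq 0$, set $\Phi := (\bbar\rho_\Lambda, \chi)\colon G \to \GL_2(\FF_\Lambda) \times (\ZZ/M\ell\ZZ)^\times$, let $\calK$ be the finite Galois extension of $\QQ$ fixed by $\ker \Phi$, and let $C$ be the conjugacy class of $\Phi(g_1)$ in the finite group $\Phi(G)$. Every element of $C$ has second coordinate $a$ and first coordinate of trace $\tr(\bbar\rho_\Lambda(g_1)) \neq 0$. By the Chebotarev density theorem there is a prime $p$ unramified in $\calK$ with $\Phi(\Frob_p) \in C$; then $p \equiv a \pmod{M\ell}$ and $\tr(\bbar\rho_\Lambda(\Frob_p)) \neq 0$, so $r_p \not\equiv 0 \pmod{\lambda}$ by the reduction of the first paragraph, which completes the proof.
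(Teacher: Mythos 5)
Your proof is correct, but the key step is argued quite differently from the paper. After the same reduction to producing $g\in G$ with $\chi(g)=a$ and $\tr(\bbar\rho_\Lambda(g))\neq 0$, the paper works directly with the commutator subgroup $H'$ of $H:=\bbar\rho_\Lambda^\proj(G)$: it observes that $\bbar\rho_\Lambda^\proj(\chi^{-1}(a))$ is a union of $H'$-cosets (since $\bbar\rho_\Lambda^\proj(\ker\chi)\supseteq H'$, as $G/\ker\chi$ is abelian) and then \emph{explicitly} exhibits, in each $H'$-coset, an element $\alpha$ with $\tr(\alpha)^2/\det(\alpha)\neq 0$ --- trivially when $H'=\PSL_2(\FF_\lambda)$ (every trace/determinant pair occurs in $\GL_2(\FF_\lambda)$), and by listing unipotent coset representatives in the single leftover case $\FF_\lambda=\FF_3$, $H=\PSL_2(\FF_3)$. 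You instead prove that $\bbar\rho_\Lambda|_{G'}$ is absolutely irreducible (via the normal subgroup $N=\bbar\rho_\Lambda^\proj(G')\neq 1$ having no fixed point on $\PP^1(\FFbar_\Lambda)$) and then invoke Burnside together with nondegeneracy of the trace form on $\M_2(\FF_\Lambda)$. Both routes terminate with Chebotarev, and both have to treat $\FF_\lambda=\FF_3$ with some care: the paper by hand-listing cosets of the Klein four group in $A_4$, you by verifying that $\PSL_2(\FF_3)$-orbits on $\PP^1(\FFbar_\Lambda)$ still have at least three points. The paper's version is more elementary and gives a concrete coset representative; yours is more conceptual, packages the linear algebra cleanly via the trace pairing, and would generalize more readily to higher-dimensional representations, at the cost of having to verify the orbit-size lower bound (the ``maximal tori'' stabilizer bookkeeping you flag) rather than merely writing down a few $3\times 3=9$ matrices.
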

\begin{proof}
Set $H= \bbar\rho_\Lambda^\proj(G)$; it is $\PSL_2(\FF_\lambda)$ or $\PGL_2(\FF_\lambda)$ by assumption.  Let $H'$ be the commutator subgroup of $H$.   We claim that  for each coset $\kappa$ of $H'$ in $H$, there exists an $\alpha\in \kappa$ with $\tr(\alpha)^2/\det(\alpha)\neq 0$.   If $H'=\PSL_2(\FF_\lambda)$, then the claim is easy; note that for any $t\in \FF_\lambda$ and $d\in \FF_\lambda^\times$, there is a matrix in $\GL_2(\FF_\lambda)$ with trace $t$ and determinant $d$.  When $\#\FF_\lambda\neq 3$, the group $\PSL_2(\FF_\lambda)$ is non-abelian and simple, so $H'=\PSL_2(\FF_\lambda)$.  When $\#\FF_\lambda=3$ and $H=\PGL_2(\FF_\lambda)$, we have $H'=\PSL_2(\FF_\lambda)$.    It thus suffices to prove the claim in the case where $\FF_\lambda=\FF_3$ and $H=\PSL_2(\FF_3)$.   In this case, $H'$ is the unique subgroup of $H$ of index $3$ and the cosets of $H/H'$ are represented by $\left(\begin{smallmatrix} 1 & b \\  0 & 1 \end{smallmatrix}\right)$ with $b\in \FF_3$.   The claim is now immediate in this remaining case.

Let $\chi \colon G\twoheadrightarrow (\ZZ/M\ell\ZZ)^\times$ be the cyclotomic character that satisfies $\chi(\Frob_p) \equiv p \pmod{M\ell}$ for all $p\nmid M\ell$.    The set $\bbar\rho_\Lambda(\chi^{-1}(a))$ is thus the union of cosets of $H'$ in $H$.    By the claim above, there exists an $\alpha\in \bbar\rho_\Lambda^\proj(\chi^{-1}(a))$ with $\tr(\alpha)^2/\det(\alpha)\neq 0$.   By the Chebotarev density theorem, there is a prime $p\nmid M\ell$ satisfying $p\equiv a \pmod{M\ell}$ and  $\bbar\rho_\Lambda^\proj(\Frob_p)= \alpha$.   The lemma follows since $r_p/p^{k-1}$ modulo $\lambda$ agrees with $\tr(\alpha)^2/\det(\alpha)\neq 0$.   
\end{proof}

\noindent \textbf{Case 1:} \emph{Assume that $k$ is odd or $[\FF_\lambda:\FF_\ell]$ is even.}

First suppose that $\bbar\rho_\Lambda^\proj(G)=\PSL_2(\FF_\lambda)$.  By Lemma~\ref{L:rp progressions}, there are primes $p_1,\ldots, p_m \nmid N\ell$ that generate the group $(\ZZ/M\ZZ)^\times$ and satisfy $r_{p_i}\not\equiv 0 \pmod{\lambda}$ for all $1\leq i\leq m$.   By Lemma~\ref{L:PSL2 p criterion} and the assumption $\bbar\rho_\Lambda^\proj(G)=\PSL_2(\FF_\lambda)$, the image of $r_{p_i}/{p_i}^{k-1}$ in $\FF_\lambda$ is a non-zero square for all $1\leq i \leq m$.     For each $1\leq i \leq m$, the assumption that $k$ is odd or $[\FF_\lambda:\FF_\ell]$ is even implies that $p_i^{k-1}$ is a square in $\FF_\lambda$ and hence the image of $r_{p_i}$ in $\FF_\lambda$ is a non-zero square.    Since $\lambda \nmid 2$, we deduce that each $r_{p_i}$ is a square in $K_\lambda$.   By Lemma~\ref{L:L description}(\ref{L:L description ii}), the prime $\lambda$ splits completely in $L$.

Now suppose that $\bbar\rho_\Lambda^\proj(G)=\PGL_2(\FF_\lambda)$.   There exists an element $\alpha \in \PGL_2(\FF_\lambda) - \PSL_2(\FF_\lambda)$ with $\tr(\alpha)^2/\det(\alpha) \neq 0$.    By the Chebotarev density theorem, there is a prime $p\nmid N\ell$ such that $\bbar\rho_\Lambda^\proj(\Frob_p) = \alpha$.   We have $r_p \equiv \tr(\alpha)^2/\det(\alpha) \not\equiv 0\pmod{\lambda}$.   Since $\bbar\rho_\Lambda^\proj(\Frob_p) \notin \PSL_2(\FF_\lambda)$, Lemma~\ref{L:PSL2 p criterion} implies that the image of $r_p/p^{k-1}$ in $\FF_\lambda$ is not a square.   Since $k$ is odd or $[\FF_\lambda:\FF_\ell]$ is even, the image of $r_p$ in $\FF_\lambda$ is not a square.  Therefore, $r_p$ is not a square in $K_\lambda$.   By Lemma~\ref{L:L description}(\ref{L:L description ii}), we deduce that $\lambda$ does not split completely in $L$.
\\

\noindent \textbf{Case 2:} \emph{Assume that $k$ is even, $[\FF_\lambda:\FF_\ell]$ is odd, and $\ell\nmid N$.}

Since $\ell\nmid N$, there is an integer $a\in \ZZ$ such that $a\equiv 1 \pmod{M}$ and $a$ is not a square modulo $\ell$.   By Lemma~\ref{L:rp progressions}, there is a prime $p\equiv a \pmod{M\ell}$ such that $r_p\not\equiv 0 \pmod{\lambda}$.  

We claim that $a_p\in R$ and $\varepsilon(p)=1$.  With notation as in \S\ref{SS:inner twists}, take any $\gamma\in \Gamma$.   Since the conductor of $\chi_\gamma$ divides $M$ and $p\equiv 1 \pmod{M}$, we have $\gamma(a_p)=\chi_\gamma(p) a_p =a_p$.   Since $\gamma\in \Gamma$ was arbitrary, we have $a_p \in K$ by Lemma~\ref{L:fields agree}.  Therefore, $a_p\in R$ since it is an algebraic integer.  We have $\varepsilon(p)=1$ since $p\equiv 1\pmod{N}$.

Since $a_p\in R$ and $r_p\not\equiv 0 \pmod{\lambda}$, the image of $a_p^2$ in $\FF_\lambda$ is a non-zero square.    Since $k$ is even, $p^k$ is a square in $\FF_\lambda$.  Since $p$ is not a square modulo $\ell$ and $[\FF_\lambda:\FF_\ell]$ is odd, the prime $p$ is not a square in $\FF_\lambda$.  So the image of 
\[
a_p^2/(\varepsilon(p)p^{k-1}) = p\cdot a_p^2/p^k
\]
in $\FF_\lambda$ is not a square.    Lemma~\ref{L:PSL2 p criterion} implies that $\bbar\rho_\Lambda^\proj(\Frob_p) \notin \PSL_2(\FF_\lambda)$.   Therefore, $\bbar\rho_\Lambda^\proj(G)=\PGL_2(\FF_\lambda)$.

\section{An effective version of Theorem~\ref{T:Ribet}}  \label{S:effective Ribet}

Take a newform $f$ with notation and assumptions as in \S\ref{SS:general result}.     Let $\lambda$ be a non-zero prime ideal of $R$ and let $\ell$ be the prime lying under $\lambda$.   Let $k_\lambda$ be the subfield of $\FF_\lambda$ generated by the image of $r_p$ modulo $\lambda$ with primes $p\nmid N\ell$.   Take any prime ideal $\Lambda$ of $\OO$ that divides $\lambda$.

In this section, we describe how to compute an explicit finite set $S$ of prime ideals of $R$ as in Theorem~\ref{T:Ribet}.   First some simple definitions:
\begin{itemize}
\item
Let $\FF$ be an extension of $\FF_\Lambda$ of degree $\gcd(2,\ell)$. 
\item
Let $e_0=0$ if $\ell\geq k-1$ and $\ell\nmid N$, and $e_0=\ell-2$ otherwise.
\item
Let $e_1=0$ if $N$ is odd, and $e_1=1$ otherwise.   
\item
Let $e_2=0$ if $\ell\geq 2k$, and $e_2=1$ otherwise.   
\item
Define $\calM=4^{e_1} \ell^{e_2}\prod_{p|N} p$.
\end{itemize}
We will prove the following in \S\ref{S:proof theorem criterion}.

\begin{theorem} \label{T:criteria}
Suppose that all the following conditions hold:
\begin{alphenum}
\item \label{P:criteria a}
For every integer $0\leq j \leq e_0$ and character $\chi\colon (\ZZ/N\ZZ)^\times \to \FF^\times$, there is a prime $p\nmid N\ell$ such that $\chi(p)p^j \in \FF$ is not a root of the polynomial $x^2 - a_px + \varepsilon(p)p^{k-1} \in \FF_\Lambda[x]$.

\item \label{P:criteria b}
For every non-trivial character $\chi \colon (\ZZ/\calM\ZZ)^\times \to \{\pm 1\}$, there is a prime $p\nmid N\ell$ such that $\chi(p)=-1$ and $r_p \not\equiv 0 \pmod{\lambda}$.

\item \label{P:criteria c}
If $\#k_\lambda \notin \{4,5\}$, then at least one of the following hold:
\begin{itemize}
\item
 $\ell > 5k-4$ and $\ell \nmid N$, 
\item
$\ell \equiv 0, \pm 1 \pmod{5}$ and  $\#k_\lambda\neq\ell$,
\item
$\ell \equiv \pm 2\pmod{5}$ and $\#k_\lambda\neq \ell^2$,
\item
there is a prime $p\nmid N\ell$ such that the image of $a_{p}^2/(\varepsilon(p) p^{k-1})$ in $\FF_\lambda$ is not equal to $0$, $1$ and $4$, and is not a root of $x^2-3x+1$. 
\end{itemize}

\item  \label{P:criteria d}
If $\#k_\lambda \notin \{3,5,7\}$, then at least one of the following hold:
\begin{itemize}
\item
 $\ell > 4k-3$ and $\ell \nmid N$, 
\item
$\#k_\lambda\neq \ell$,
\item
there is a prime $p\nmid N\ell$ such that the image of $a_{p}^2/(\varepsilon(p) p^{k-1})$ in $\FF_\lambda$ is not equal to $0$, $1$, $2$ and $4$.  
\end{itemize}

\item \label{P:criteria e}
If $\#k_\lambda \in \{5,7\}$, then for every non-trivial character $\chi \colon (\ZZ/ 4^{e_1}\ell N\ZZ)^\times\to \{\pm 1\}$ there is a prime $p\nmid N\ell$ such that $\chi(p)=1$ and $a_{p}^2/(\varepsilon(p) p^{k-1})\equiv 2 \pmod{\lambda}$.
\end{alphenum}

Then the group $\bbar\rho_\Lambda^\proj(G)$ is conjugate in $\PGL_2(\FF_\Lambda)$ to $\PSL_2(k_\lambda)$ or $\PGL_2(k_\lambda)$.
\end{theorem}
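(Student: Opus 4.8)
The plan is to apply Dickson's classification of the finite subgroups of $\PGL_2(\Fbar_\ell)$ to $H:=\bbar\rho_\Lambda^\proj(G)\subseteq\PGL_2(\FF_\Lambda)$ and to use hypotheses (\ref{P:criteria a})--(\ref{P:criteria e}) to eliminate every possibility for $H$ except the desired one. First I would record that $k_\lambda$ equals the subfield $k'$ of $\FF_\Lambda$ generated by the values $\tr(\alpha)^2/\det(\alpha)$, $\alpha\in H$: by the Chebotarev density theorem the classes $\bbar\rho_\Lambda^\proj(\Frob_p)$ with $p\nmid N\ell$ exhaust $H$, and $\tr(\bbar\rho_\Lambda^\proj(\Frob_p))^2/\det(\bbar\rho_\Lambda^\proj(\Frob_p))$ is the image of $r_p/p^{k-1}$ in $\FF_\lambda$; since every subfield of $\FF_\Lambda$ contains $\FF_\ell$ and $r_p$ and $r_p/p^{k-1}$ differ by an element of $\FF_\ell^\times$, the field generated by the $r_p\bmod\lambda$ and the field generated by the $r_p/p^{k-1}\bmod\lambda$ coincide, so $k'=k_\lambda$. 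Dickson's theorem then says that one of the following holds: (1) $\bbar\rho_\Lambda$ is reducible over $\FF$ (this covers the case where $H$ lies in a Borel subgroup of $\PGL_2(\FF_\Lambda)$ as well as the case where $H$ lies in a non-split maximal torus, which becomes split over the quadratic extension $\FF$); (2) $H$ normalizes a maximal torus but $\bbar\rho_\Lambda$ is irreducible over $\FF$, i.e.\ $H$ is dihedral and not conjugate into a Borel over $\FF$; (3) $H\cong A_4$, $S_4$ or $A_5$; or (4) $H$ is conjugate in $\PGL_2(\FF_\Lambda)$ to $\PSL_2(\FF_{\ell^s})$ or $\PGL_2(\FF_{\ell^s})$ for some $s\geq 1$. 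In case (4) the trace-squared field of the subgroup is exactly $\FF_{\ell^s}$, whence $\FF_{\ell^s}=k'=k_\lambda$ and the theorem holds. It remains to exclude cases (1), (2) and (3).

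For (1): if $\bbar\rho_\Lambda\otimes\FF$ has a one-dimensional constituent, then after replacing it by the (Galois-stable) pair of constituents one of them, $\psi$, satisfies $\psi(\Frob_p)=\chi(p)\,p^{j}$ for all $p\nmid N\ell$, where $\chi\colon(\ZZ/N\ZZ)^\times\to\FF^\times$ is a Dirichlet character and $0\leq j\leq e_0$. This is the structure theory of residual representations attached to a newform congruent modulo $\Lambda$ to an Eisenstein series: away from $\ell$ the constituents are unramified outside $N$, while at $\ell$ Deligne's description of $\bbar\rho_\Lambda\vert_{D_\ell}$ forces one constituent to be unramified at $\ell$ (hence $j=0$) when $\ell\nmid N$ and $\ell\geq k-1$ (the case $e_0=0$, where one also uses that $\ell-1\nmid k-1$ so that $\bbar\rho_\Lambda\vert_{I_\ell}$ is not scalar), and in the remaining cases the crude bound $0\leq j\leq\ell-2$ suffices because $\bbar\chi_\ell$ has order dividing $\ell-1$. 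Since $\psi(\Frob_p)$ is a root in $\FF$ of $x^2-a_px+\varepsilon(p)p^{k-1}$, this contradicts condition (\ref{P:criteria a}). This input — pinning down the exact shape $\chi(p)p^{j}$ of the constituents together with the correct range of $j$, with care at primes dividing $N$ (in particular when $\ell^2\mid N$) — is the main obstacle; it rests on Fontaine--Laffaille theory and the theory of Eisenstein congruences.

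For (2): let $T$ be the maximal torus normalized by $H$ and let $\chi\colon G\to\{\pm1\}$ be the quadratic character cutting out the index-two subgroup $(\bbar\rho_\Lambda^\proj)^{-1}(H\cap T)$; it is nontrivial, for otherwise $H\subseteq T$ and $\bbar\rho_\Lambda$ is reducible over $\FF$. Every element of $H\setminus T$ is represented by an anti-diagonal matrix, hence has zero trace, so $\chi(\Frob_p)=-1$ forces $r_p\equiv 0\pmod\lambda$. I would then check that the conductor of $\chi$ divides $\calM=4^{e_1}\ell^{e_2}\prod_{p\mid N}p$: it is unramified outside $N\ell$, tamely ramified (hence of conductor exponent $\leq 1$) at each odd prime, contributes at most a factor $4$ at $2$ (the $4^{e_1}$), and is unramified at $\ell$ unless $\bbar\rho_\Lambda\vert_{D_\ell}$ is induced from a ramified quadratic extension of $\QQ_\ell$, which by Deligne's description does not occur when $\ell\geq 2k$ (the $\ell^{e_2}$). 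Thus $\chi$ factors through $(\ZZ/\calM\ZZ)^\times$ and is nontrivial, contradicting condition (\ref{P:criteria b}).

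For (3): the values of $\tr(\alpha)^2/\det(\alpha)$ on $A_4$, $S_4$, $A_5$ lie respectively in $\{0,1,4\}$, $\{0,1,2,4\}$, and $\{0,1,4\}\cup\{\text{roots of }x^2-3x+1\}$; the corresponding trace-squared fields are $\FF_\ell$ for $A_4$ and $S_4$, and $\FF_\ell$ or $\FF_{\ell^2}$ for $A_5$ according as $\ell\equiv0,\pm1$ or $\pm2\pmod 5$; and when $\ell\nmid N$ the image of inertia at $\ell$ in $\bbar\rho_\Lambda^\proj$ is cyclic of order at least $(\ell-1)/(k-1)$, which exceeds $5$ once $\ell>5k-4$ and exceeds $4$ once $\ell>4k-3$. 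Using $k'=k_\lambda$, each alternative in conditions (\ref{P:criteria c}) and (\ref{P:criteria d}) contradicts one of these facts, so $H\not\cong A_5$ unless $\#k_\lambda\in\{4,5\}$ (where $A_5=\PSL_2(k_\lambda)$, harmless) and $H\not\cong A_4,S_4$ unless $\#k_\lambda\in\{3,5,7\}$. When $\#k_\lambda=3$ we have $A_4=\PSL_2(\FF_3)$ and $S_4=\PGL_2(\FF_3)$, which is fine. When $\#k_\lambda\in\{5,7\}$: $H\cong A_4$ is impossible since the trace-squared field of $A_4$ is $\FF_3$; and if $H\cong S_4$ then the sign homomorphism $G\to S_4\to\{\pm1\}$ is a nontrivial quadratic character of conductor dividing $4^{e_1}\ell N$ whose kernel maps into $A_4\subseteq S_4$, so condition (\ref{P:criteria e}) supplies a prime $p$ with $\bbar\rho_\Lambda^\proj(\Frob_p)\in A_4$ and $a_p^2/(\varepsilon(p)p^{k-1})\equiv 2\pmod\lambda$, contradicting the fact that no element of $A_4$ has $\tr^2/\det=2$. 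With (1), (2) and (3) excluded, Dickson's theorem leaves only case (4), and the proof is complete.
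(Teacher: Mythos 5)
Your overall strategy is the same as the paper's (Dickson's classification, then eliminating the reducible/Borel, dihedral, and exceptional possibilities using (a)--(e), and identifying the residual field via Chebotarev), but there is a genuine gap in the dihedral analysis. In your case (1) you claim reducibility over $\FF$ also covers the situation where the image lies in a non-split maximal torus, "which becomes split over the quadratic extension $\FF$". However $\FF$ has degree $\gcd(2,\ell)$ over $\FF_\Lambda$, so $\FF=\FF_\Lambda$ whenever $\ell$ is odd, and a non-split Cartan subgroup remains irreducible over $\FF$ in that case. Consequently, in your case (2) the quadratic character $\chi$ cutting out $H\cap T$ could be trivial precisely when $H$ is contained in the torus itself, and then neither (a) (which only rules out reducibility over $\FF$) nor (b) (which only concerns non-trivial characters) yields a contradiction. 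So the possibility that $\bbar\rho_\Lambda(G)$ lies in a non-split Cartan subgroup, $\ell$ odd, is never excluded by your argument. The paper excludes it by using that $\bbar\rho_\Lambda$ is \emph{odd}: complex conjugation maps to an element of order $2$ and determinant $-1$, while the unique element of order $2$ in a (cyclic) non-split Cartan is $-I$, of determinant $1$. Oddness is an input your proof never invokes, and without it (or a substitute) the dihedral/Cartan step is incomplete.

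Two smaller points. First, your exclusion of $H\cong \mathfrak{A}_4$ when $\#k_\lambda\in\{5,7\}$ "since the trace-squared field of $A_4$ is $\FF_3$" is incorrect: the values $\{0,1,4\}$ generate the prime field $\FF_\ell$, i.e.\ $\FF_5$ or $\FF_7$ here. The conclusion is still easily obtained (as in the paper) because condition (\ref{P:criteria e}) supplies a prime with $a_p^2/(\varepsilon(p)p^{k-1})\equiv 2\pmod\lambda$, and $2\notin\{0,1,4\}$ in $\FF_5$ or $\FF_7$. Second, in the Borel case you assert that the unramified-outside-$\ell$ twist of each constituent comes from a character of modulus $N$; the substantive point is that its conductor divides $N$ (not merely a power of $N$), which the paper proves by comparing Artin conductors of the constituents with $N(\bbar\rho_\Lambda)$ and invoking Livn\'e's bound $N(\bbar\rho_\Lambda)\mid N$. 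Your appeal to "Eisenstein congruences/Fontaine--Laffaille" does not by itself supply this conductor divisibility, which is exactly what makes hypothesis (\ref{P:criteria a}), with characters of $(\ZZ/N\ZZ)^\times$, applicable.
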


\begin{remark}
Note that the above conditions simplify greatly if one also assumes that $\ell\nmid N$ and $\ell>5k-4$.    

Though we will not prove it, Theorem~\ref{T:criteria} has been stated so that all the conditions (\ref{P:criteria a})--(\ref{P:criteria e}) hold if and only if $\bbar\rho_\Lambda^\proj(G)$ is conjugate to $\PSL_2(k_\lambda)$ or $\PGL_2(k_\lambda)$.    In particular, after considering enough primes $p$, one will obtain the minimal set $S$ of Theorem~\ref{T:Ribet} (one could use an effective version of Chebotarev density to make this a legitimate algorithm).
\end{remark}

Let us now describe how to compute a set of exceptional primes as in  Theorem~\ref{T:Ribet}.  Define $M=N$ if $N$ is odd and $M=4N$ otherwise.  Set  $\calM':=4^{e_1} \prod_{p|N} p$.    We first choose some primes:
\begin{itemize}
\item
Let $q_1,\ldots, q_n$ be primes congruent to $1$ modulo $N$.
\item
Let $p_1,\ldots, p_m \nmid N$ be primes with $r_{p_i}\neq 0$ such that for every non-trivial character $\chi \colon (\ZZ/\calM' \ZZ)^\times \to \{\pm 1\}$, we have $\chi(p_i)=-1$ for some $1\leq i \leq m$.
\item
Let $p_0 \nmid N$ be a prime such that $\QQ(r_{p_0})=K$.  
\end{itemize}
That such primes $p_1,\ldots, p_m$ exist is clear since the set of primes $p$ with $r_p\neq 0$ has density $1$.   That such a prime $q$ exists follows from Lemma~\ref{L:fields agree} (the set of such $q$ actually has positive density).   Define the ring $R':=\ZZ[a_{q}^2/\varepsilon(q)]$; it is an order in $R$.  

Define $S$ to be the set of non-zero primes $\lambda$ of $R$, dividing a rational prime $\ell$, that satisfy one of the following conditions:
\begin{itemize}
\item 
$\ell \leq 5k-4$ or $\ell\leq 7$,
\item
$\ell | N$,
\item
for all $1\leq i \leq n$, we have $\ell=q_i$ or $r_{q_i} \equiv (1+q_i^{k-1})^2 \pmod{\lambda}$,
\item
for some $1\leq i \leq m$, we have $\ell=p_i$ or $r_{p_i} \equiv 0 \pmod{\lambda}$,
\item
$\ell=q$ or $\ell$ divides $[R:R']$.
\end{itemize}
Note that the set $S$ is \emph{finite} (the only part that is not immediate is that $r_{q_i} - (1+q_i^{k-1})^2\neq 0$; this follows from Deligne's bound $|r_{q_i}|=|a_{q_i}| \leq 2q_i^{(k-1)/2}$ and $k>1$).  The following is our effective version of Theorem~\ref{T:Ribet}.

\begin{theorem}  \label{T:effective version}
Take any non-zero prime ideal $\lambda \notin S$ of $R$ and let $\Lambda$ be any prime of $\OO$ dividing $\lambda$.  
Then the group $\bbar\rho_\Lambda^\proj(G)$ is conjugate in $\PGL_2(\FF_\Lambda)$ to either $\PSL_2(\FF_\lambda)$ or $\PGL_2(\FF_\lambda)$.
\end{theorem}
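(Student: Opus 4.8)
The plan is to deduce Theorem~\ref{T:effective version} from the effective criterion of Theorem~\ref{T:criteria}. The argument splits into two steps: first show that $\lambda\notin S$ forces the residue field $k_\lambda$ of \S\ref{S:effective Ribet} to equal $\FF_\lambda$, so that the conclusions of the two theorems coincide; then show that $\lambda\notin S$ implies every one of the hypotheses (\ref{P:criteria a})--(\ref{P:criteria e}) of Theorem~\ref{T:criteria}, and invoke it.

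First I would record the crude consequences of $\lambda\notin S$: the rational prime $\ell$ below $\lambda$ satisfies $\ell\nmid N$, $\ell>5k-4$, and $\ell>7$ (so $\ell\geq 11$). Since $k>1$ is an integer, $k\geq 2$, hence $\ell\geq 5k-3\geq 2k\geq k-1$; with $\ell\nmid N$ this gives $e_0=e_2=0$ in the notation of \S\ref{S:effective Ribet}, so $\calM=4^{e_1}\prod_{p\mid N}p=\calM'$, the modulus used to select the auxiliary primes $p_i$. For the residue field: since $\lambda\notin S$ we have $\ell\neq q$ and $\ell\nmid[R:R']$, where $q\nmid N$ is the prime with $\QQ(r_q)=K$ and $R'=\ZZ[r_q]$ is the associated order in $R$. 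The condition $\ell\nmid[R:R']$ makes $R$ and $R'$ coincide after localizing at $\ell$, so $R'/(\lambda\cap R')=R/\lambda=\FF_\lambda$; since $R'=\ZZ[r_q]$ this says $\FF_\ell(\bar r_q)=\FF_\lambda$, where $\bar r_q$ denotes the image of $r_q$ modulo $\lambda$. As $q\nmid N\ell$, the element $\bar r_q$ is one of the generators of $k_\lambda$, so $\FF_\lambda=\FF_\ell(\bar r_q)\subseteq k_\lambda\subseteq\FF_\lambda$, i.e.\ $k_\lambda=\FF_\lambda$. Hence Theorem~\ref{T:criteria} will yield exactly the statement of Theorem~\ref{T:effective version}, and it remains to verify (\ref{P:criteria a})--(\ref{P:criteria e}).

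Conditions (\ref{P:criteria c}) and (\ref{P:criteria d}) hold at once: their first alternatives, ``$\ell>5k-4$ and $\ell\nmid N$'' and ``$\ell>4k-3$ and $\ell\nmid N$'', are satisfied because $\ell>5k-4\geq 4k-3$ and $\ell\nmid N$. Condition (\ref{P:criteria e}) is vacuous, since $\#k_\lambda=\#\FF_\lambda$ is a power of $\ell\geq 11$, hence not in $\{5,7\}$. For (\ref{P:criteria b}): given a non-trivial $\chi\colon(\ZZ/\calM\ZZ)^\times=(\ZZ/\calM'\ZZ)^\times\to\{\pm1\}$, the choice of $p_1,\dots,p_m$ provides an index $i$ with $\chi(p_i)=-1$; since $\lambda\notin S$ we also have $\ell\neq p_i$ and $r_{p_i}\not\equiv 0\pmod{\lambda}$, and $p_i\nmid N$, so $p_i$ witnesses (\ref{P:criteria b}). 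For (\ref{P:criteria a}): because $e_0=0$ only $j=0$ occurs, so fix a character $\chi\colon(\ZZ/N\ZZ)^\times\to\FF^\times$. Using $\lambda\notin S$, pick $i$ with $\ell\neq q_i$ and $r_{q_i}\not\equiv(1+q_i^{k-1})^2\pmod{\lambda}$. Since $q_i\equiv 1\pmod N$ we have $\varepsilon(q_i)=1$, hence $r_{q_i}=a_{q_i}^2$, so $a_{q_i}^2\not\equiv(1+q_i^{k-1})^2$ modulo $\lambda$, hence modulo $\Lambda$, and therefore $a_{q_i}\not\equiv\pm(1+q_i^{k-1})\pmod{\Lambda}$. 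In particular $1-a_{q_i}+\varepsilon(q_i)q_i^{k-1}\not\equiv 0\pmod{\Lambda}$, so $\chi(q_i)=1$ is not a root of $x^2-a_{q_i}x+\varepsilon(q_i)q_i^{k-1}$ in $\FF_\Lambda[x]$; as $q_i\nmid N\ell$, this prime $q_i$ witnesses (\ref{P:criteria a}) for this $\chi$. With (\ref{P:criteria a})--(\ref{P:criteria e}) established, Theorem~\ref{T:criteria} shows that $\bbar\rho_\Lambda^\proj(G)$ is conjugate in $\PGL_2(\FF_\Lambda)$ to $\PSL_2(k_\lambda)=\PSL_2(\FF_\lambda)$ or to $\PGL_2(k_\lambda)=\PGL_2(\FF_\lambda)$.

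All of the genuine content is absorbed into Theorem~\ref{T:criteria}, so the delicate points are the dictionary between the defining conditions of $S$ and the hypotheses (\ref{P:criteria a}), (\ref{P:criteria b}): in particular observing that $r_{q_i}\equiv(1+q_i^{k-1})^2$ is precisely the obstruction ``$1$ is an eigenvalue of $\bbar\rho_\Lambda(\Frob_{q_i})$'' once one uses $\varepsilon(q_i)=1$, and that passing to squares costs only a sign. I expect the most error-prone step to be the commutative-algebra argument that $\ell\nmid[R:R']$ forces $k_\lambda=\FF_\lambda$, where one must keep track of the relation between $\lambda$, $\lambda\cap R'$, and $\ell$ (including possible ramification of $\ell$ in $R$), though nothing deep is involved there.
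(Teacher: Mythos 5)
Your proposal is correct and follows essentially the same route as the paper's own proof: reduce to Theorem~\ref{T:criteria}, translate each defining condition of $S$ into the corresponding hypothesis (\ref{P:criteria a})--(\ref{P:criteria e}) (with $e_0=e_2=0$ and $\calM=\calM'$ from $\ell>5k-4$, $\ell>7$, $\ell\nmid N$), and use $q\neq\ell$ together with $\ell\nmid[R:R']$ to force $k_\lambda=\FF_\lambda$. The only differences are cosmetic — you establish $k_\lambda=\FF_\lambda$ first rather than last and spell out a bit more detail in the commutative-algebra and condition-(\ref{P:criteria a}) steps — but the decomposition of the argument is identical.
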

\begin{proof}
Let $\ell$ be the rational prime lying under $\lambda$.     We shall verify the conditions of Theorem~\ref{T:criteria}.

We first show that condition (\ref{P:criteria a}) of Theorem~\ref{T:criteria} holds.  Take any integer $0\leq j \leq e_0$ and character $\chi\colon (\ZZ/N\ZZ)^\times \to \FF^\times=\FF_\Lambda^\times$.    We have $\ell>5k-4>k-1$ and $\ell \nmid N$ since $\lambda \notin S$,  so $e_0=0$ and hence $j=0$.    Take any $1\leq i \leq n$.  Since $q_i\equiv 1\pmod{N}$ and $j=0$, we have $\chi(q_i)q_i^j=1$ and $\varepsilon(q_i)=1$.     Since $\lambda \notin S$, we also have $q_i \nmid N \ell$ ($q_i\nmid N$ is immediate from the congruence imposed on $q_i$).   If $\chi(q_i)q_i^j=1$ was a root of $x^2-a_{q_i}x+\varepsilon(q_i)q_i^{k-1}$ in $\FF_\Lambda[x]$, then we would have $a_{q_i} \equiv 1 + q_i^{k-1} \pmod{\Lambda}$; squaring and using that $\varepsilon(q_i)=1$, we deduce that $r_{q_i} \equiv (1+q_i^{k-1})^2 \pmod{\lambda}$.   Since $\lambda\notin S$, we have $r_{q_i} \not \equiv (1+q_i^{k-1})^2 \pmod{\lambda}$ for some $1\leq i \leq n$ and hence $\chi(q_i)q_i^j$ is not a root of $x^2-a_{q_i}x+\varepsilon(q_i)q_i^{k-1}$.

We now show that condition (\ref{P:criteria b}) of Theorem~\ref{T:criteria} holds.   We have $e_2=0$ since $\lambda\notin S$, and hence $\calM'=\calM$.  Take any non-trivial character $\chi \colon (\ZZ/\calM\ZZ)^\times \to \{\pm 1\}$.  By our choice of primes $p_1,\ldots,p_m$, we have $\chi(p_i)=-1$ for some $1\leq i \leq m$.   The prime $p_i$ does not divide $N\ell$ (that $p_i\neq \ell$ follows since $\lambda\notin S$).    Since $\lambda\notin S$, we have $r_{p_i}\not\equiv 0 \pmod{\lambda}$.

Since $\lambda\notin S$, the prime $\ell\nmid N$ is greater that $7$, $4k-3$ and $5k-4$.  Conditions (\ref{P:criteria c}), (\ref{P:criteria d}) and (\ref{P:criteria e}) of Theorem~\ref{T:criteria} all hold.

Theorem~\ref{T:criteria} now implies that $\bbar\rho_\Lambda^\proj(G)$ is conjugate in $\PGL_2(\FF_\Lambda)$ to either $\PSL_2(k_\lambda)$ or $\PGL_2(k_\lambda)$.   It remains to prove that $k_\lambda=\FF_\lambda$.   We have $q\neq \ell$ since $\lambda \notin S$.   The image of the reduction map $R' \to \FF_\lambda$ thus lies in $k_\lambda$.   We have  $\ell\nmid [R:R']$ since $\lambda\notin S$, so the map $R'\to \FF_\lambda$ is surjective.   Therefore, $k_\lambda=\FF_\lambda$.
\end{proof}

\section{Proof of Theorem~\ref{T:criteria}}  \label{S:proof theorem criterion}

\subsection{Some group theory}

Fix a prime $\ell$ and an integer $r\geq 1$.  

A \defi{Borel subgroup} of $\GL_2(\FF_{\ell^r})$ is a subgroup conjugate to the subgroup of upper triangular matrices.   

A \defi{split Cartan subgroup} of $\GL_2(\FF_{\ell^r})$ is a subgroup conjugate to the subgroup of diagonal matrices.   A \defi{non-split Cartan subgroup} of $\GL_2(\FF_{\ell^r})$ is a subgroup that is cyclic of order $(\ell^r)^2-1$.  Fix a Cartan subgroup $\calC$ of $\GL_2(\FF_{\ell^r})$.   Let $\calN$ be the normalizer of $\calC$ in $\GL_2(\FF_{\ell^r})$.   One can show that $[\calN:\calC] = 2$ and that $\tr(g)=0$ and $g^2$ is scalar for all $g\in \calN-\calC$. 

\begin{lemma} \label{L:subgroups}
Fix a prime $\ell$ and an integer $r\geq 1$.   Let $G$ be a subgroup of $\GL_2(\FF_{\ell^r})$ and let $\bbar{G}$ be its image in $\PGL_2(\FF_{\ell^r})$.  Then at least one of the following hold:
\begin{enumerate}
\item
$G$ is contained in a Borel subgroup of $\GL_2(\FF_{\ell^r})$,
\item
$G$ is contained in the normalizer of a Cartan subgroup of $\GL_2(\FF_{\ell^r})$,
\item
$\bbar{G}$ is isomorphic to $\mathfrak{A}_4$,
\item
$\bbar{G}$ is isomorphic to $\mathfrak{S}_4$,
\item
$\bbar{G}$ is isomorphic to $\mathfrak{A}_5$,
\item
$\bbar G$ is conjugate to $\PSL_2(\FF_{\ell^s})$ or $\PGL_2(\FF_{\ell^s})$ for some integer $s$ dividing $r$.
\end{enumerate}
\end{lemma}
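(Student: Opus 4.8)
Section 5 of the excerpt contains the statement; here is how I would approach its proof.

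\medskip

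The plan is to deduce Lemma~\ref{L:subgroups} from the classical classification, due to Dickson, of the finite subgroups of $\PGL_2$ over a field of characteristic $\ell$; the six listed cases are just a repackaging of that classification, organized by the type of subgroup containing $G$ (or $\bar{G}$). Since $\GL_2(\FF_{\ell^r})\subseteq\GL_2(\Fbar_\ell)$, I would work with $\bar{G}\subseteq\PGL_2(\FF_{\ell^r})\subseteq\PGL_2(\Fbar_\ell)$ acting on $\PP^1(\Fbar_\ell)$. The starting observation is that a nonidentity element $\bar{g}\in\PGL_2(\Fbar_\ell)$ has exactly one fixed point on $\PP^1(\Fbar_\ell)$ when a lift of $\bar{g}$ is a nonscalar unipotent matrix (equivalently, when $\bar{g}$ has order $\ell$) and exactly two fixed points otherwise (lift semisimple and nonscalar). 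I would then split into the cases $\ell\mid|G|$ and $\ell\nmid|G|$, noting that these are equivalent to $\ell\mid|\bar{G}|$ and $\ell\nmid|\bar{G}|$ since the kernel of $G\to\bar{G}$ consists of scalars in $\GL_2(\FF_{\ell^r})$, whose orders divide $\ell^r-1$.

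\medskip

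\textbf{The case $\ell\mid|G|$.} Let $P$ be a Sylow-$\ell$ subgroup of $\bar{G}$. Every unipotent $u\in\GL_2(\Fbar_\ell)$ satisfies $(u-1)^2=0$, hence $u^\ell=1$, and an $\ell$-group acting on $\PP^1(\FF_{\ell^r})$ (which has $\ell^r+1\equiv 1\pmod{\ell}$ points) has a fixed point, so $P$ is a nontrivial elementary abelian $\ell$-group fixing a unique point $x_P\in\PP^1(\Fbar_\ell)$. If $P$ is normal in $\bar{G}$, then $\bar{G}$ fixes $x_P$ and $G$ lies in the corresponding Borel subgroup, which is case (1). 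Otherwise $\bar{G}$ has at least two Sylow-$\ell$ subgroups, and here I would run Dickson's generation argument: a pair of distinct conjugates of $P$, together with an element of prime-to-$\ell$ order normalizing one of them, generate a copy of $\PSL_2(k)$ where $k$ is the subfield of $\FF_{\ell^r}$ generated by the matrix entries that occur; since $\bar{G}$ normalizes this subgroup and $N_{\PGL_2(\Fbar_\ell)}(\PSL_2(\FF_{\ell^s}))=\PGL_2(\FF_{\ell^s})$ (for $\ell$ odd and $\ell^s>3$, with the small cases checked by hand), $\bar{G}$ equals $\PSL_2(\FF_{\ell^s})$ or $\PGL_2(\FF_{\ell^s})$ with $\FF_{\ell^s}=k\subseteq\FF_{\ell^r}$, whence $s\mid r$. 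This is case (6).

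\medskip

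\textbf{The case $\ell\nmid|G|$.} Then $|G|$ is prime to $\ell$ and every nonidentity element of $\bar{G}$ has exactly two fixed points on $\PP^1(\Fbar_\ell)$. Counting the pairs $(\bar{g},x)$ with $\bar{g}\in\bar{G}\setminus\{1\}$ and $\bar{g}\cdot x=x$ in two ways — once as $2(|\bar{G}|-1)$, once by grouping the points with nontrivial stabilizer into $\bar{G}$-orbits $O_1,\dots,O_k$ with stabilizer orders $n_i$ — yields the classical relation $2-2/|\bar{G}|=\sum_{i=1}^k(1-1/n_i)$ with $k\le 3$, exactly as in the determination of the finite subgroups of $\PGL_2(\CC)$. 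Analysing the solutions, and using that any subgroup of $\bar{G}$ fixing two points of $\PP^1(\Fbar_\ell)$ lies in the image of a Cartan subgroup and is therefore cyclic, one finds that $\bar{G}$ is cyclic, dihedral, $\mathfrak{A}_4$, $\mathfrak{S}_4$, or $\mathfrak{A}_5$. In the first two cases I would lift back to $G$: any $g\in G$ lifting a generator of a cyclic $\bar{G}$ has prime-to-$\ell$ order (the scalar it absorbs has order dividing $\ell^r-1$), hence is semisimple, so $G\subseteq Z_{\GL_2(\FF_{\ell^r})}(g)$, a split or non-split Cartan subgroup; if $\bar{G}$ is dihedral, the preimage of its cyclic index-$2$ subgroup lies in a Cartan subgroup $\calC$, and $\calC$ is the centralizer in $\GL_2(\FF_{\ell^r})$ of any of its regular semisimple elements, so $G$ — which normalizes that preimage — lies in $N(\calC)$. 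These give case (2), and the three remaining possibilities are cases (3), (4), (5).

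\medskip

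I expect the main obstacle to be the ``big subgroup'' sub-case of $\ell\mid|G|$: showing that two non-normal Sylow-$\ell$ subgroups force $\bar{G}$ to contain \emph{exactly} $\PSL_2$ of a subfield of $\FF_{\ell^r}$, and that $\bar{G}$ is then trapped between $\PSL_2(\FF_{\ell^s})$ and $\PGL_2(\FF_{\ell^s})$. This is the technical heart of Dickson's theorem and requires the explicit identification of the generated subfield together with the normalizer computation $N_{\PGL_2(\Fbar_\ell)}(\PSL_2(\FF_{\ell^s}))=\PGL_2(\FF_{\ell^s})$. A minor point to track is the behaviour at $\ell=2,3$, where for small $s$ the group $\PGL_2(\FF_{\ell^s})$ coincides with $\PSL_2(\FF_{\ell^s})$ or with one of $\mathfrak{A}_4,\mathfrak{S}_4$; since the lemma only asserts that at least one of the six cases holds, such overlaps cause no difficulty, and for $\ell=2$ one could shortcut much of the argument using $\PGL_2=\PSL_2$.
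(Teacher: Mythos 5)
The paper's own proof is a one-line citation to Dickson's classification of finite subgroups of $\PGL_2(\Fbar_\ell)$ (Huppert, Satz~8.27), and your proposal is essentially a reconstruction of that classification, so in spirit you are following the paper's approach.

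There is, however, a genuine gap in the $\ell \mid |G|$ branch. You propose the dichotomy: if the Sylow-$\ell$ subgroup $P$ of $\bar G$ is normal then $\bar G$ is in a Borel (case~(1)); if not, Dickson's generation argument produces $\PSL_2(\FF_{\ell^s})$ inside $\bar G$ (case~(6)). This dichotomy is correct for $\ell$ odd, because then the normalizer of a Cartan in $\PGL_2(\FF_{\ell^r})$ has order prime to $\ell$, so case~(2) never involves $\ell \mid |\bar G|$. It \emph{fails} for $\ell = 2$: in characteristic $2$ every involution of $\PGL_2$ is unipotent, so a dihedral group $D_m$ of order $2m$ with $m>1$ odd sitting in the normalizer of a (non-split or split) Cartan has $2 \mid |\bar G|$, and its $m$ Sylow-$2$ subgroups of order $2$ are \emph{not} normal. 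Concretely $D_5 \subset \PGL_2(\FF_4)$ belongs to case~(2), not case~(6). Your specific construction also stalls there: the normalizer of a fixed Sylow-$2$ subgroup in $D_5$ is just that subgroup, so there is no nontrivial element of odd order normalizing it to feed into the ``two conjugates plus a prime-to-$\ell$ normalizing element generate $\PSL_2(k)$'' step. Dickson's actual proof separates out this possibility; you need to either treat $\ell = 2$ separately, or refine the non-normal Sylow subcase so that ``dihedral with odd rotation subgroup'' is peeled off into case~(2) before the generation argument is applied. The rest of the sketch (the $\ell \nmid |G|$ orbit count, the lifts of cyclic/dihedral $\bar G$ into Cartans and their normalizers, and the observation that the coincidences $\PSL_2(\FF_3)\cong \mathfrak A_4$, $\PGL_2(\FF_3)\cong \mathfrak S_4$, $\PSL_2(\FF_4)\cong\PSL_2(\FF_5)\cong \mathfrak A_5$ are harmless) is sound.
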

\begin{proof}
This can be deduced directly from a theorem of Dickson, cf.~\cite{MR0224703}*{Satz~8.27}, which will give the finite subgroups of $\PSL_2(\FFbar_\ell) = \PGL_2(\FFbar_\ell)$.  The finite subgroups of $\PGL_2(\FF_{\ell^r})$ have been worked out in \cite{Xander}.
\end{proof}

\begin{lemma} \label{L:easy image PGL}
Fix a prime $\ell$ and an integer $r\geq1$.   Take a matrix $A\in \GL_2(\FF_{\ell^r})$ and let $m$ be its order in $\PGL_2(\FF_{\ell^r})$.
\begin{romanenum}
\item
Suppose that $\ell \nmid m$.  If $m$ is $1$, $2$, $3$ or $4$, then $\tr(A)^2/\det(A)$ is $4$, $0$, $1$ or $2$, respectively.    If $m=5$, then $\tr(A)^2/\det(A)$ is a root of $x^2-3x+1$.
\item
If $\ell | m$, then $\tr(A)^2/\det(A) =   4$.
\end{romanenum}
\end{lemma}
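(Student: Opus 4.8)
The key observation is that $\tau(A) := \tr(A)^2/\det(A)$ is invariant under replacing $A$ by $cA$ for a scalar $c \in \FF_{\ell^r}^\times$ and under conjugation, so it depends only on the image $\bar A$ of $A$ in $\PGL_2(\FF_{\ell^r})$; the plan is to reduce $A$ to a normal form, splitting according to whether or not $A$ is semisimple, and to note that the non-semisimple case is exactly the case $\ell \mid m$. Indeed, if $A$ is not semisimple then its characteristic polynomial is $(x-c)^2$ with $c \in \FF_{\ell^r}^\times$, and $A$ is conjugate over $\FF_{\ell^r}$ to the Jordan block $\left(\begin{smallmatrix} c & 1 \\ 0 & c\end{smallmatrix}\right)$, whose image in $\PGL_2(\FF_{\ell^r})$ is the class of the unipotent matrix $\left(\begin{smallmatrix} 1 & c^{-1} \\ 0 & 1\end{smallmatrix}\right)$ of order exactly $\ell$, and $\tau(A) = (2c)^2/c^2 = 4$; conversely, if $A$ is semisimple its eigenvalues $\alpha,\beta$ lie in $\FFbar_\ell^\times$, $\bar A$ is conjugate in $\PGL_2(\FFbar_\ell)$ to $\operatorname{diag}(\alpha/\beta,1)$, and its order equals the prime-to-$\ell$ order of $\alpha/\beta$. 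This proves part (ii) and shows that under the hypothesis $\ell \nmid m$ of part (i) the matrix $A$ is automatically semisimple.

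For part (i), assume $A$ is semisimple, diagonalize it over $\FFbar_\ell$ with eigenvalues $\alpha,\beta$, and set $\zeta := \alpha/\beta$, which by the above has order $m$. Then
\[
\tau(A) = \frac{(\alpha+\beta)^2}{\alpha\beta} = \zeta + \zeta^{-1} + 2,
\]
so everything reduces to evaluating $\zeta + \zeta^{-1}$ for $\zeta$ of order $m = 1,2,3,4,5$. For $m = 1$ this is $2$; for $m = 2,3,4$ one uses $\zeta = -1$, $1 + \zeta + \zeta^2 = 0$, and $\zeta^2 = -1$ to get $-2$, $-1$, $0$, hence $\tau(A) = 0, 1, 2$ respectively, the hypothesis $\ell \nmid m$ being exactly what guarantees these are genuine $m$-th roots of unity. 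For $m = 5$, dividing $1 + \zeta + \zeta^2 + \zeta^3 + \zeta^4 = 0$ by $\zeta^2$ shows $w := \zeta + \zeta^{-1}$ satisfies $w^2 + w - 1 = 0$, and substituting $w = \tau(A) - 2$ gives $\tau(A)^2 - 3\tau(A) + 1 = 0$.

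None of these computations is hard; the only one needing a small manipulation is the $m = 5$ case, handled by the symmetrization above. The one subtlety worth flagging is that $m$ is the order of $A$ \emph{in $\PGL_2(\FF_{\ell^r})$}, so in the non-semisimple case one must check that this projective order is exactly $\ell$ rather than merely divisible by it — which is immediate from the explicit unipotent representative.
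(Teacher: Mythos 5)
Your proof is correct and follows the same essential route as the paper: reduce $\tau(A) = \tr(A)^2/\det(A)$ to a normal form using its invariance under scaling and conjugation over $\FFbar_\ell$, take $\operatorname{diag}(\zeta,1)$ in the semisimple ($\ell\nmid m$) case and a unipotent Jordan block in the non-semisimple ($\ell\mid m$) case, and read off the value. The one genuine addition is that you spell out the semisimple/non-semisimple dichotomy and why it matches $\ell\nmid m$ versus $\ell\mid m$ — a step the paper's proof uses implicitly when it asserts the two normal forms — and you carry out the $m=5$ symmetrization explicitly; both are worthwhile clarifications but do not change the argument.
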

\begin{proof}
The quantity $\tr(A)^2/\det(A)$ does not change if we replace $A$ by a scalar multiple or by a conjugate in $\GL_2(\FFbar_\ell)$.  If $\ell\nmid m$, then we may thus assume that $A=\left(\begin{smallmatrix} \zeta & 0 \\  0 & 1 \end{smallmatrix}\right)$ where $\zeta\in \FFbar_\ell$ has order $m$. We have $\tr(A)^2/\det(A) = \zeta+\zeta^{-1} + 2$, which is $4$, $0$, $1$ or $2$ when $m$ is $1$, $2$, $3$ or $4$, respectively.   If $m=5$, then $ \zeta+\zeta^{-1} + 2$ is a root of $x^2-3x+1$.  If $\ell| m$, then after conjugating and scaling, we may assume that $A=\left(\begin{smallmatrix} 1 & 1 \\  0 & 1 \end{smallmatrix}\right)$ and hence $\tr(A)^2/\det(A) =4$. 
\end{proof}

\subsection{Image of inertia at $\ell$}

Fix an inertia subgroup $\calI_\ell$ of $G=\Gal(\Qbar/\QQ)$ for the prime $\ell$; it is uniquely defined up to conjugacy.  The following gives important information concerning the representation $\bbar\rho_\Lambda|_{\calI_\ell}$ for large $\ell$.   Let $\chi_\ell \colon G\twoheadrightarrow \FF_\ell^\times$ be the character such that for each prime $p\nmid \ell$, $\chi_\ell$ is unramified at $p$ and $\chi_\ell(\Frob_p)\equiv p \pmod{\ell}$.
 
\begin{lemma}  \label{L:inertia}
Fix a prime $\ell \geq k-1$ for which $\ell\nmid 2N$.   Let $\Lambda$ be a prime ideal of $\OO$ dividing $\ell$ and set $\lambda=\Lambda\cap R$.
\begin{romanenum}
\item \label{L:inertia i}
Suppose that $r_\ell \not\equiv 0 \pmod{\lambda}$.  After conjugating $\bbar\rho_\Lambda$ by a matrix in $\GL_2(\FF_\Lambda)$, we have
\[
\bbar\rho_\Lambda|_{\calI_\ell} = \left(\begin{matrix} \chi_\ell^{k-1}|_{\calI_\ell} & * \\  0 & 1 \end{matrix}\right)
\]
In particular, $\bbar\rho_\Lambda^\proj(\calI_\ell)$ contains a cyclic group of order $(\ell-1)/\gcd(\ell-1,k-1)$.

\item \label{L:inertia ii}
Suppose that $r_\ell \equiv 0 \pmod{\lambda}$.   Then $\bbar\rho_\Lambda|_{\calI_\ell}$ is absolutely irreducible and $\bbar\rho_\Lambda(\calI_\ell)$ is cyclic.  Furthermore, the group $\bbar\rho_\Lambda^\proj(\calI_\ell)$ is cyclic of order $(\ell+1)/\gcd(\ell+1,k-1)$.  

\end{romanenum}
\end{lemma}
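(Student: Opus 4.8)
The plan is to use the local description of $\bbar\rho_\Lambda|_{\calI_\ell}$ coming from the theory of Galois representations attached to modular forms, together with the fact that $f$ has weight $k$ with $\ell \geq k-1$ and $\ell \nmid 2N$, which puts us squarely in the Fontaine--Laffaille range. In this range the restriction to inertia at $\ell$ is known, by work of Deligne (the ``good reduction / small weight'' case), to be described as follows: the semisimplification of $\bbar\rho_\Lambda|_{\calI_\ell}$ is either a sum of two powers of the fundamental character of level $1$ (if $\ell$ is \emph{ordinary} at $f$, i.e. $a_\ell$ is a unit at $\Lambda$) or a single fundamental character of level $2$ (if $\ell$ is \emph{supersingular}). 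Concretely, one has $\det \bbar\rho_\Lambda|_{\calI_\ell} = \chi_\ell^{k-1}|_{\calI_\ell}$ from the determinant formula \eqref{E:trace and det}, and in the ordinary case the representation restricted to inertia is reducible with diagonal characters $\chi_\ell^{k-1}$ and $1$ (up to twist that vanishes on inertia, which I can normalize away), while in the supersingular case $\bbar\rho_\Lambda|_{\calI_\ell}$ is (after extending scalars to $\FF_{\ell^2}$) the direct sum of $\psi^{k-1}$ and $\psi^{\ell(k-1)}$ where $\psi$ is a fundamental character of level $2$ of order $\ell^2-1$. The dichotomy ``ordinary vs.\ supersingular'' is exactly the dichotomy ``$a_\ell$ is a $\Lambda$-unit vs.\ $a_\ell \equiv 0$,'' and since $r_\ell = a_\ell^2/\varepsilon(\ell)$ and $\varepsilon(\ell)$ is a unit, this matches the hypotheses $r_\ell \not\equiv 0 \pmod\lambda$ and $r_\ell \equiv 0 \pmod\lambda$ respectively.

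For part~(\ref{L:inertia i}): in the ordinary case, after conjugating $\bbar\rho_\Lambda$ by a matrix in $\GL_2(\FF_\Lambda)$ we may assume $\bbar\rho_\Lambda|_{\calI_\ell}$ is upper triangular with diagonal entries $\chi_\ell^{k-1}|_{\calI_\ell}$ and the trivial character; the diagonal entries are determined (as an unordered pair, up to unramified twist) by Deligne, and the unramified twist is trivial on $\calI_\ell$ so can be absorbed. This gives the displayed shape. Then $\bbar\rho_\Lambda^\proj(\calI_\ell)$ contains the image of the diagonal torus element $\mathrm{diag}(\chi_\ell^{k-1}(\sigma),1)$, and since $\chi_\ell$ surjects onto $\FF_\ell^\times$ which is cyclic of order $\ell-1$, the character $\chi_\ell^{k-1}$ restricted to inertia has image of order $(\ell-1)/\gcd(\ell-1,k-1)$; its image in $\PGL_2$ (quotient by scalars) is still cyclic of that order since the only way a nontrivial diagonal matrix $\mathrm{diag}(t,1)$ becomes scalar is $t=1$. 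For part~(\ref{L:inertia ii}): in the supersingular case, extending scalars so that $\FF_{\ell^2} \subseteq \FF_\Lambda$ (possible after further base change, or one argues over $\FFbar_\ell$), $\bbar\rho_\Lambda|_{\calI_\ell} \otimes \FFbar_\ell \cong \psi^{k-1} \oplus \psi^{\ell(k-1)}$ with $\psi$ of order $\ell^2-1$; since $k-1$ is not divisible by $\ell+1$ in general these two characters are distinct and conjugate under $\Frob_\ell$, so $\bbar\rho_\Lambda|_{\calI_\ell}$ is induced from a character of the unramified quadratic extension and hence absolutely irreducible, and $\bbar\rho_\Lambda(\calI_\ell)$ is contained in a non-split Cartan subgroup, hence cyclic. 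The projective image is the image of the cyclic group generated by $\mathrm{diag}(\psi^{k-1}(\sigma),\psi^{\ell(k-1)}(\sigma))$ modulo scalars; such an element is scalar exactly when $\psi^{k-1}(\sigma) = \psi^{\ell(k-1)}(\sigma)$, i.e. when $\psi^{(k-1)(\ell-1)}(\sigma)=1$, so the projective order of the generator is $(\ell^2-1)/\gcd\bigl((k-1)(\ell-1),\,\ell^2-1\bigr)$, and a short computation with $\ell^2-1 = (\ell-1)(\ell+1)$ and $\gcd(\ell-1,\ell+1)\mid 2$ (using $\ell$ odd and the range $\ell\geq k-1$) reduces this to $(\ell+1)/\gcd(\ell+1,k-1)$, as claimed. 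One must also check $\bbar\rho_\Lambda^\proj(\calI_\ell)$ is the \emph{full} cyclic group of that order and not merely contains it, which follows because $\calI_\ell$ surjects via the fundamental characters onto the relevant tame quotient.

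The main obstacle I anticipate is making the citation to Deligne's local description precise enough in the two cases simultaneously, i.e. correctly invoking the Fontaine--Laffaille / crystalline comparison to get the \emph{exact} inertia shape (not just the semisimplification) in the ordinary case — the reducibility there is a genuine input, due to the ordinarity hypothesis, and needs either a reference to Deligne's original construction via the Eichler--Shimura congruence or to later refinements (Wiles, Gross); and in the supersingular case one needs that the level-$2$ fundamental character appears with the correct exponents $k-1$ and $\ell(k-1)$, which again is the statement that the crystalline Dieudonné module has Hodge--Tate weights $0$ and $k-1$. Once those two local statements are cited, the remaining arithmetic — computing the order of the image of a power of a fundamental character inside $\PGL_2$ — is elementary manipulation of $\gcd$'s with $\ell-1$, $\ell+1$, and $k-1$, using $\ell\nmid 2N$, $\ell$ odd, and $\ell \geq k-1$ to control the $2$-part of $\gcd(\ell-1,\ell+1)$ and to ensure $k-1 < \ell$ where needed.
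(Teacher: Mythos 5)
Your proposal is correct and takes essentially the same route as the paper: the paper's proof is simply a citation of the Deligne (ordinary) and Fontaine (supersingular) descriptions of $\bbar\rho_\Lambda|_{\calI_\ell}$ (Theorems~2.5 and 2.6 of Edixhoven's article \cite{MR1176206}), together with the observation that $r_\ell\equiv 0\pmod{\lambda}$ if and only if $a_\ell\equiv 0\pmod{\Lambda}$, which is exactly your ordinary/supersingular dichotomy. The order computations you carry out with $\chi_\ell^{k-1}$ and the level-two fundamental character are the elementary content behind the stated cyclic orders (and in the supersingular case the simplification is immediate from $\gcd\bigl((\ell-1)(\ell+1),(\ell-1)(k-1)\bigr)=(\ell-1)\gcd(\ell+1,k-1)$, with no need to control $\gcd(\ell-1,\ell+1)$).
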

\begin{proof}
Parts (\ref{L:inertia i}) and (\ref{L:inertia ii}) follow from Theorems~2.5 and Theorem~2.6, respectively, of \cite{MR1176206}; they are theorems of Deligne and Fontaine, respectively.    We have used that $r_\ell=a_\ell^2/\varepsilon(\ell) \in R$ is congruent to $0$ modulo $\lambda$ if and only if $a_\ell\in \OO$ is congruent to $0$ modulo $\Lambda$.
\end{proof}

\subsection{Borel case}  \label{SS:Borel case 0}

Suppose that $\bbar\rho_\Lambda(G)$ is a reducible subgroup of $\GL_2(\FF)$.    There are thus characters $\psi_1,\psi_2 \colon G \to \FF^\times$ such that after conjugating the $\FF$-representation $G \xrightarrow{\bbar\rho_\Lambda} \GL_2(\FF_\Lambda) \subseteq \GL_2(\FF)$,
we have
\[
\bbar\rho_\Lambda = \left(\begin{matrix} \psi_1 & * \\  0 & \psi_2 \end{matrix}\right).
\]
The characters $\psi_1$ and $\psi_2$ are unramified at each prime $p\nmid N\ell$ since $\bbar\rho_\Lambda$ is unramified at such primes.  

\begin{lemma}  \label{L:unramified LCFT}
For each $i\in \{1,2\}$, there is a unique integer $0\leq m_i < \ell-1$ such that $\psi_i\chi_\ell^{-m_i} \colon G \to \FF^\times$ is unramified at all primes $p\nmid N$.    If $\ell \geq k-1$ and $\ell\nmid N$, then $m_1$ or $m_2$ is $0$.
\end{lemma}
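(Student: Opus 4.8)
The first assertion is pure local class field theory. The plan is to recall that a continuous character $\psi \colon G \to \FF^\times$ unramified outside $N\ell$ is, by restriction to the inertia group $\calI_\ell$, determined on $\calI_\ell$ by a character of the tame quotient $\calI_\ell^{\mathrm{tame}} \twoheadrightarrow \FF_\ell^\times$ (since $\FF^\times$ has order prime to $\ell$, wild inertia acts trivially). The mod-$\ell$ cyclotomic character $\chi_\ell$ already surjects onto $\FF_\ell^\times$ via its restriction to $\calI_\ell$, and $\FF_\ell^\times \hookrightarrow \FF^\times$; so there is a unique integer $0 \le m_i < \ell-1$ such that $\psi_i|_{\calI_\ell}$ and $\chi_\ell^{m_i}|_{\calI_\ell}$ agree. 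Then $\psi_i \chi_\ell^{-m_i}$ is unramified at $\ell$ by construction, and unramified at all $p \nmid N\ell$ because both $\psi_i$ (given) and $\chi_\ell$ are; uniqueness of $m_i$ follows from the fact that $\chi_\ell^{m}|_{\calI_\ell}$ is trivial only when $(\ell-1) \mid m$.

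For the second assertion, the key input is Lemma~\ref{L:inertia}, which under the hypotheses $\ell \ge k-1$ and $\ell \nmid 2N$ describes $\bbar\rho_\Lambda|_{\calI_\ell}$ up to conjugacy. First I would dispose of the excluded prime $\ell = 2$: since $\bbar\rho_\Lambda(G)$ is reducible here (we are in the Borel case), if $\ell = 2$ then $\ell - 1 = 1$, forcing $m_1 = m_2 = 0$ trivially; so we may assume $\ell$ is odd and Lemma~\ref{L:inertia} applies. Now, being in the Borel case, $\bbar\rho_\Lambda|_{\calI_\ell}$ is reducible, so we are in case (\ref{L:inertia i}) of Lemma~\ref{L:inertia} (case (\ref{L:inertia ii}) gives an absolutely irreducible restriction to inertia, a contradiction). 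Hence, after a suitable conjugation, $\bbar\rho_\Lambda|_{\calI_\ell}$ is upper triangular with diagonal characters $\chi_\ell^{k-1}|_{\calI_\ell}$ and $1$. Comparing with the given upper-triangular form $\left(\begin{smallmatrix} \psi_1 & * \\ 0 & \psi_2 \end{smallmatrix}\right)$, the multiset of diagonal characters of $\bbar\rho_\Lambda|_{\calI_\ell}$ is $\{\psi_1|_{\calI_\ell}, \psi_2|_{\calI_\ell}\}$, which must therefore equal $\{\chi_\ell^{k-1}|_{\calI_\ell}, 1\}$. The minor subtlety is that conjugating to put $\bbar\rho_\Lambda|_{\calI_\ell}$ in the Deligne normal form may permute the two lines relative to the global flag, so one should argue at the level of the set of Jordan–Hölder characters of the restriction to $\calI_\ell$ rather than matching $\psi_1$ with $\chi_\ell^{k-1}$ on the nose; both $\{\psi_1|_{\calI_\ell},\psi_2|_{\calI_\ell}\}$ and $\{\chi_\ell^{k-1}|_{\calI_\ell},1\}$ are this set. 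Thus one of $\psi_i|_{\calI_\ell}$ is the trivial character, i.e. that $\psi_i$ is unramified at $\ell$; since $0 \le k - 1 < \ell - 1$ as well (using $\ell \ge k-1$; the boundary case $\ell = k-1$ needs a glance — there $k-1 \equiv 0$, consistent), the corresponding $m_i$ equals $0$ by the uniqueness just established.

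The only place requiring genuine care — the main obstacle — is the bookkeeping of \emph{which} semisimplification one is comparing: one must ensure that the diagonal characters extracted from the global reducible form $\left(\begin{smallmatrix}\psi_1 & * \\ 0 & \psi_2\end{smallmatrix}\right)$ really do match, as an unordered pair, the diagonal characters produced by Lemma~\ref{L:inertia}(\ref{L:inertia i}) after its own (possibly different) conjugation. This is handled by passing to the semisimplification of $\bbar\rho_\Lambda|_{\calI_\ell}$ — an invariant of the representation, independent of any choice of basis — whose constituents are $\{\psi_1|_{\calI_\ell}, \psi_2|_{\calI_\ell}\}$ on one hand and $\{\chi_\ell^{k-1}|_{\calI_\ell}, 1\}$ on the other. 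Everything else is a direct citation of local class field theory and of Lemma~\ref{L:inertia}.
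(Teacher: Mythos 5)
Your proof is correct and takes essentially the same route as the paper: existence and uniqueness of $m_i$ come from local class field theory (abelianness over $\QQ_\ell$ together with the prime-to-$\ell$ order of $\FF^\times$ forces $\psi_i|_{\calI_\ell}$ to factor through $\chi_\ell(\calI_\ell)=\FF_\ell^\times$), and the final claim is read off from Lemma~\ref{L:inertia}, for which the paper simply says it follows immediately while you supply the implicit details (the trivial case $\ell=2$, excluding case~(\ref{L:inertia ii}) because the global representation is reducible, and matching the Jordan--H\"older constituents of $\bbar\rho_\Lambda|_{\calI_\ell}$ as an unordered pair). One minor blemish that does not affect the argument: your aside that $0\le k-1<\ell-1$ neither follows from $\ell\ge k-1$ nor is needed, since once some $\psi_i|_{\calI_\ell}$ is trivial, $m_i=0$ already follows from the uniqueness statement alone.
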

\begin{proof}
The existence and uniqueness of $m_i$ is an easy consequence of class field theory for $\QQ_\ell$.   
 A choice of embedding $\Qbar\subseteq \Qbar_\ell$ induces an injective homomorphism $G_{\QQ_\ell}:=\Gal(\Qbar_\ell/\QQ_\ell) \hookrightarrow G$.   
Let $\QQ_\ell^\ab$ be the maximal abelian extension of $\QQ_\ell$ in $\Qbar_\ell$.  Restricting $\psi_i$ to $G_{\QQ_\ell}$, we obtain a representation $\psi_i\colon G_{\QQ_\ell}^\ab:=\Gal(\QQ_\ell^\ab/\QQ_\ell) \to \FF^\times$.  By local class field, the inertia subgroup $\calI$ of $G_{\QQ_\ell}^\ab$ is isomorphic to $\ZZ_\ell^\times$.    Since $\ell$ does not divide the cardinality of $\FF^\times$, we find that $\psi_i|_{\calI}$ factors through a group isomorphic to $\FF_\ell^\times$.   The character $\psi_i|_{\calI}$ must agree with a power of $\chi_\ell|_{\calI}$ since $\chi_\ell\colon G_{\QQ_\ell} \to \FF_\ell^\times$ satisfies $\chi_\ell(\calI)=\FF_\ell^\times$ and $\FF_\ell^\times$ is cyclic.

The second part of the lemma follows immediately from Lemma~\ref{L:inertia}.
\end{proof}

Take any $i\in \{1,2\}$.  By Lemma~\ref{L:unramified LCFT}, there is a unique $0\leq m_i < \ell-1$ such that the character
\[
\tilde{\psi}_i:=\psi_i\chi_\ell^{-m_i}\colon G\to \FF^\times
\] 
is unramified at $\ell$ and at all primes $p\nmid N$.  There is a character $\chi_i \colon (\ZZ/N_i\ZZ)^\times \to \FF^\times$ with $N_i\geq 1$ dividing some power of $N$ and $\ell\nmid N_i$ such that $\tilde\psi_i(\Frob_p) = \chi_i(p)$ for all $p\nmid N\ell$.   We may assume that $\chi_i$ is taken so that $N_i$ is minimal.

\begin{lemma} \label{L:Ni divides N}
The integer $N_i$ divides $N$.
\end{lemma}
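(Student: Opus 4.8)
The plan is to reduce the statement to a local conductor estimate. By construction the character $\tilde\psi_i=\psi_i\chi_\ell^{-m_i}$ is unramified at $\ell$ and at every prime $p\nmid N$, so the primitive Dirichlet character it cuts out is ramified only at primes dividing $N$; since $\chi_i$ was chosen so that $N_i$ is minimal, $N_i=\prod_{p\mid N}p^{\operatorname{cond}_p(\tilde\psi_i)}$. Thus $N_i\mid N$ will follow once we show $\operatorname{cond}_p(\tilde\psi_i)\le v_p(N)$ for every prime $p\mid N$. When $p=\ell$ (which can occur only if $\ell\mid N$) this holds since $\tilde\psi_i$ is unramified at $\ell$, so we may fix a prime $p\mid N$ with $p\ne\ell$.

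I would then chain together four facts about conductor exponents at this $p$. First, local--global compatibility for Deligne's representation $\rho_\Lambda$ attached to $f$ gives $\operatorname{cond}_p(\rho_\Lambda)=v_p(N)$, since the prime-to-$\ell$ Artin conductor of $\rho_\Lambda$ equals the level of $f$. Second, because $p\ne\ell$, reduction modulo $\Lambda$ does not increase the conductor at $p$, so $\operatorname{cond}_p(\bbar\rho_\Lambda)\le\operatorname{cond}_p(\rho_\Lambda)=v_p(N)$: the tame part does not grow since a basis of the $\calI_p$-fixed part of the lattice $\OO_\Lambda^2$ reduces to linearly independent $\calI_p$-invariant vectors of $\bbar\rho_\Lambda$, and the wild (Swan) part at $p\ne\ell$ does not grow either. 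Third, from the Borel shape $0\to\psi_1\to\bbar\rho_\Lambda\to\psi_2\to 0$ of $\FF[G]$-modules: restricting to a decomposition group at $p$ and comparing the spaces of invariants under each ramification subgroup $H$ yields $\dim(\bbar\rho_\Lambda)^{H}\le\dim\psi_1^{H}+\dim\psi_2^{H}$, hence $\operatorname{cond}_p(\bbar\rho_\Lambda)\ge\operatorname{cond}_p(\psi_1)+\operatorname{cond}_p(\psi_2)\ge\operatorname{cond}_p(\psi_i)$. Fourth, $\tilde\psi_i$ differs from $\psi_i$ only by a power of $\chi_\ell$, which is ramified solely at $\ell$, so $\operatorname{cond}_p(\tilde\psi_i)=\operatorname{cond}_p(\psi_i)$. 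Combining the four inequalities gives $\operatorname{cond}_p(\tilde\psi_i)\le v_p(N)$, as desired.

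The only step that is not routine is the second one, namely that reduction modulo $\Lambda$ does not increase the prime-to-$\ell$ conductor (equivalently, the monotonicity of the Swan conductor at $p\ne\ell$ under reduction); for this I would quote a standard reference rather than reprove the Swan-conductor monotonicity. The other ingredients --- local--global compatibility for $\rho_\Lambda$, the elementary inequality for ramification-invariants in a short exact sequence, and invariance of the prime-to-$\ell$ conductor under twisting by $\chi_\ell$ --- are all standard, so once that input is cited the argument is short.
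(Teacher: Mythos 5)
Your argument follows essentially the same route as the paper: identify $N_i$ with the Artin conductor of $\tilde\psi_i$, bound it by the conductor of $\bbar\rho_\Lambda$ via the exact-sequence inequality on ramification invariants, and then bound the conductor of $\bbar\rho_\Lambda$ by $N$. The only difference is that where the paper cites Livn\'e's Proposition~0.1 \cite{MR987567} directly for the statement that $N(\bbar\rho_\Lambda)$ divides $N$, you unpack that citation into local--global compatibility for $\rho_\Lambda$ plus monotonicity of the prime-to-$\ell$ conductor under reduction; both are fine, and your version is slightly more self-contained modulo the Swan-conductor input you flag.
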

\begin{proof}
We first recall the notion of an Artin conductor.
Consider a representation $\rho\colon G\to \Aut_{\FF}(V)$, where $V$ is a finite dimensional $\FF$-vector space.  Take any prime $p\neq \ell$.   A choice of embedding $\Qbar\subseteq \Qbar_p$ induces an injective homomorphism $\Gal(\Qbar_p/\QQ_p) \hookrightarrow G$.   Choose any finite Galois extension $L/\QQ_p$ for which $\rho(\Gal(\Qbar_p/L))=\{I\}$.     For each $i\geq 0$, let $H_i$ be the $i$-th ramification subgroup of $\Gal(L/\QQ_p)$ with respect to the lower numbering.   Define the integer 
\[
f_p(\rho)= \sum_{i\geq 0} [H_0:H_i]^{-1}\cdot \dim_{\FF} V/V^{H_i}.
\]
The \defi{Artin conductor} of $\rho$ is the integer $N(\rho):=\prod_{p\neq \ell} p^{f_p(\rho)}$.

Using that the character $\tilde\psi_i\colon G \to \FF^\times$ is unramified at $\ell$, one can verify that $N(\tilde\psi_i)=N_i$.   Consider our representation $\bbar\rho_\Lambda \colon G \to \GL_2(\FF)$.  For a fixed prime $p\neq \ell$, take $L$ and $H_i$ as above.  The semisimplification of $\bbar\rho_\Lambda$ is $V_1\oplus V_2$, where $V_i$ is the one dimensional representation given by $\psi_i$.     We have $f_p(\psi_1)+f_p(\psi_2) \leq f_p(\bbar\rho_\Lambda )$ since $\dim_{\FF} V^{H_i} \leq \dim_{\FF} V_1^{H_i}  + \dim_{\FF} V_2^{H_i}$.    By using this for all $p\neq \ell$, we deduce that $N(\psi_1) N(\psi_2)=N_1 N_2$ divides $N(\bbar\rho_\Lambda)$.     The lemma follows since $N(\bbar\rho_\Lambda)$ divides $N$, cf.~\cite{MR987567}*{Prop.~0.1}.
\end{proof}

Fix an $i\in \{1,2\}$;  if $\ell \geq k-1$ and $\ell\nmid N$, then we may suppose that $m_i=0$ by Lemma~\ref{L:unramified LCFT}.   Since the conductor of $\chi_i$ divides $N$ by Lemma~\ref{L:Ni divides N},  assumption (\ref{P:criteria a}) implies that there is a prime $p\nmid N\ell$ for which $\chi_i(p) p^{m_i} \in \FF$ is not a root of $x^2-a_p x + \varepsilon(p)p^{k-1} \in \FF[x]$.   However, this is a contradiction since
\[
\chi_i(p) p^{m_i} = \tilde\psi_i(\Frob_p) \chi_\ell(\Frob_p)^{m_i} = \psi_i(\Frob_p)
\]
is a root of $x^2-a_p x + \varepsilon(p)p^{k-1}$.

Therefore, the $\FF$-representation $\bbar\rho_\Lambda$ is irreducible.  In particular, $\bbar\rho_\Lambda(G)$ is not contained in a Borel subgroup of $\GL_2(\FF_\Lambda)$.

\subsection{Cartan case}

\begin{lemma} \label{L:rule out non-split Cartan}
The group $\bbar\rho_\Lambda(G)$ is not contained in a Cartan subgroup of $\GL_2(\FF_\Lambda)$.
\end{lemma}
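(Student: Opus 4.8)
The plan is to deduce the Cartan case from the Borel case already settled in \S\ref{SS:Borel case 0}. Suppose, for contradiction, that $\bbar\rho_\Lambda(G)$ is contained in a Cartan subgroup $\calC$ of $\GL_2(\FF_\Lambda)$. If $\calC$ is split it is conjugate to the diagonal subgroup, hence contained in a Borel subgroup of $\GL_2(\FF_\Lambda)$, which contradicts the conclusion of \S\ref{SS:Borel case 0} that $\bbar\rho_\Lambda(G)$ lies in no Borel subgroup. If $\ell=2$, then the auxiliary field $\FF$ is the quadratic extension of $\FF_\Lambda$, and \emph{every} Cartan subgroup of $\GL_2(\FF_\Lambda)$ is diagonalizable over $\FF$ (a non-split one being cyclic of order prime to $\ell$, hence generated by an element with distinct eigenvalues in $\FF$), so $\bbar\rho_\Lambda$ would be reducible over $\FF$, contradicting the irreducibility established in \S\ref{SS:Borel case 0}. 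Thus we may assume $\ell$ is odd and $\calC$ is non-split.

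Now let $\FF'$ be the quadratic extension of $\FF_\Lambda$, so $\#(\FF')^\times=(\#\FF_\Lambda)^2-1$ is prime to $\ell$. A non-split Cartan of $\GL_2(\FF_\Lambda)$ is conjugate over $\FF'$ to the diagonal subgroup, so after conjugating $\bbar\rho_\Lambda$ by a matrix of $\GL_2(\FF')$ --- which changes neither the traces and determinants of the $\bbar\rho_\Lambda(\Frob_p)$ nor the ramified primes --- we may write $\bbar\rho_\Lambda=\psi_1\oplus\psi_2$ with characters $\psi_1,\psi_2\colon G\to(\FF')^\times$ unramified outside $N\ell$ and with $\psi_1(\Frob_p),\psi_2(\Frob_p)$ the two roots in $\FF'$ of $x^2-a_px+\varepsilon(p)p^{k-1}$ reduced modulo $\Lambda$, for every $p\nmid N\ell$. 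The two eigencharacters are interchanged by $\Gal(\FF'/\FF_\Lambda)$, so $\psi_2=\psi_1^\sigma$ with $\sigma$ the $\#\FF_\Lambda$-power map, and $\psi_1\neq\psi_2$, for otherwise $\bbar\rho_\Lambda$ is scalar-valued and hence reducible over $\FF_\Lambda$, contradicting \S\ref{SS:Borel case 0}. I would then rerun the bookkeeping of \S\ref{SS:Borel case 0} over $\FF'$: the proofs of Lemmas~\ref{L:unramified LCFT} and~\ref{L:Ni divides N} use only that the coefficient field has order prime to $\ell$, the local class field theory of $\QQ_\ell$, the Artin-conductor bound $N(\bbar\rho_\Lambda)\mid N$, and Lemma~\ref{L:inertia}, and so produce integers $0\le m_1=m_2\le e_0$ (one checks $m_1=m_2$ using $\chi_\ell^\sigma=\chi_\ell$) together with Dirichlet characters $\chi_1$, $\chi_2=\chi_1^\sigma$ of conductor dividing $N$ such that $\psi_i(\Frob_p)=\chi_i(p)\,p^{m_i}$ for all $p\nmid N\ell$.

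The step I expect to be the main obstacle is converting this into a contradiction: hypothesis (\ref{P:criteria a}) tests characters valued in $\FF^\times=\FF_\Lambda^\times$, whereas $\chi_1$ is \emph{not} $\FF_\Lambda^\times$-valued in the non-split case (were it so, $\psi_1=\psi_1^\sigma$). My plan is to replace $\chi_1$ by the two characters it manufactures that \emph{are} defined over $\FF_\Lambda$. First, the norm $\chi_1\chi_1^\sigma=N_{\FF'/\FF_\Lambda}\circ\chi_1$ is $\FF_\Lambda^\times$-valued of conductor dividing $N$, and comparing determinants gives $\chi_1\chi_1^\sigma=\bar\varepsilon\cdot\chi_\ell^{\,k-1-2m_1}$; since $\ell\nmid N$ this forces $\chi_\ell^{\,k-1-2m_1}$ to be trivial, and in the principal range $\ell\ge k-1$ (where $m_1=0$) that in turn forces $\ell=k$ --- so, apart from the finitely many primes with $\ell\mid N$, $\ell<k-1$, or $\ell=k$, the non-split Cartan case cannot occur at all. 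Second, the quotient $\eta:=\chi_1/\chi_1^\sigma$ is a nontrivial character valued in $\mu_{q+1}$ with $q=\#\FF_\Lambda$, and one computes $a_p^2/(\varepsilon(p)p^{k-1})\equiv\eta(p)+\eta(p)^{-1}+2\pmod\lambda$ for $p\nmid N\ell$; when $\eta$ has order $2$ this exhibits a nontrivial character $\chi\colon(\ZZ/\calM\ZZ)^\times\to\{\pm1\}$ with $r_p\equiv0\pmod\lambda$ for every $p$ such that $\chi(p)=-1$, contradicting hypothesis (\ref{P:criteria b}). The residual cases --- small $\ell$, the prime $\ell=k$, and $\eta$ of larger order --- are to be closed by a finer analysis, using the image-of-inertia description of Lemma~\ref{L:inertia} (which in the non-split Cartan situation forces $r_\ell\equiv0\pmod\lambda$ whenever $\ell\nmid 2N$, and identifies $\psi_1|_{\calI_\ell}$ with a level-two fundamental character) together with the remaining hypotheses (\ref{P:criteria c})--(\ref{P:criteria e}). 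The genuine difficulty is exactly this: making $\FF_\Lambda$-rational hypotheses bear on intrinsically $\FF'$-valued data, and disposing of the degenerate primes $\ell\mid 2N$ and $\ell<k-1$ where Lemma~\ref{L:inertia} is unavailable.
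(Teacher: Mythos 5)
Your handling of the split-Cartan case and the $\ell=2$ case agrees with the paper, but your treatment of the crucial remaining case (odd $\ell$, non-split Cartan) is both a genuinely different route and, as you yourself flag, incomplete. The paper does not run any conductor/inertia/character analysis here at all: it simply invokes the \emph{oddness} of $\bbar\rho_\Lambda$. Since $\bbar\rho_\Lambda$ comes from a newform, $\det\bbar\rho_\Lambda(c)=-1$ for complex conjugation $c$; with $\ell$ odd this forces $\bbar\rho_\Lambda(c)$ to be an element of order exactly $2$ and determinant $-1$. But a non-split Cartan subgroup of $\GL_2(\FF_\Lambda)$ is cyclic, so its unique element of order $2$ is $-I$, which has determinant $+1$. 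Hence $\bbar\rho_\Lambda(c)\notin\calC$, a contradiction. No hypothesis beyond the irreducibility already established in the Borel section is needed.

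Your plan --- diagonalize over the quadratic extension $\FF'$, re-derive analogues of Lemmas~\ref{L:unramified LCFT} and~\ref{L:Ni divides N} over $\FF'$, then try to force a contradiction with conditions (\ref{P:criteria a})--(\ref{P:criteria e}) --- is much heavier and, as you note explicitly in your last paragraph, leaves open the small primes $\ell\mid 2N$, $\ell<k-1$, $\ell=k$, and the case where the ratio character $\eta$ has order $>2$. This is a real gap: you are trying to make the $\FF_\Lambda$-rational hypotheses of Theorem~\ref{T:criteria} detect intrinsically $\FF'$-valued data, which is a mismatch, and you have no mechanism to close it. The fix is to abandon this line entirely for the non-split Cartan case and use oddness, which is free. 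It is worth remembering that oddness is exactly the classical tool that rules out non-split Cartan images of mod-$\ell$ representations from modular forms (for odd $\ell$): the image must contain an involution of determinant $-1$, and cyclic groups have no such element.

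As a smaller point, in your discussion of the norm $\chi_1\chi_1^\sigma$ you claim that comparing determinants forces $\chi_\ell^{\,k-1-2m_1}=1$ and hence, in the range $\ell\ge k-1$, $\ell=k$. But nothing forbids $k-1-2m_1=0$, i.e.\ $m_1=(k-1)/2$ when $k$ is odd, so the conclusion ``$\ell=k$'' is not correct as stated; this is another reason the route stalls.
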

\begin{proof}
Suppose that $\bbar\rho_\Lambda(G)$ is contained in a Cartan subgroup $\calC$ of $\GL_2(\FF_\Lambda)$.  If $\ell=2$, then $\calC$ is reducible as a subgroup of $\GL_2(\FF)$ since $\FF/\FF_\Lambda$ is a quadratic extension.  However, we saw in \S\ref{SS:Borel case 0} that $\bbar\rho_\Lambda(G) \subseteq \calC$ is an irreducible subgroup of $\GL_2(\FF)$.  Therefore, $\ell$ is odd.   If $\calC$ is split, then $\bbar\rho_\Lambda(G)$ is a reducible subgroup of $\GL_2(\FF_\Lambda)$.  This was ruled out in \S\ref{SS:Borel case 0}, so $\calC$ must be a non-split Cartan subgroup with $\ell$ odd.    

Recall that the representation $\bbar\rho_\Lambda$ is \emph{odd}, i.e., if $c\in G$ is an element corresponding to complex conjugation under some embedding $\Qbar\hookrightarrow \CC$, then $\det(\bbar\rho_\Lambda(c))=-1$.  Therefore, $\bbar\rho_\Lambda(c)$ has order $2$ and determinant $-1\neq 1$ (this last inequality uses that $\ell$ is odd).     A non-split Cartan subgroup $\calC$ of $\GL_2(\FF_\Lambda)$ is cyclic and hence $-I$ is the unique element of $\calC$ of order $2$.   Since $\det(-I)= 1$, we find that $\bbar\rho_\Lambda(c)$ does not belong to $\calC$; this gives the desired contradiction.
\end{proof}

\subsection{Normalizer of a Cartan case}  \label{SS:Cartan case}
Suppose that $\bbar\rho_\Lambda(G)$ is contained in the normalizer $\calN$ of a Cartan subgroup $\calC$ of $\GL_2(\FF_\Lambda)$.   The group $\calC$ has index $2$ in $\calN$, so we obtain a character 
\[
\beta_\Lambda\colon G \xrightarrow{\bbar\rho_\Lambda} \calN \to \calN/\calC\cong \{\pm 1\}.
\] 
The character $\beta_\Lambda$ is non-trivial since $\bbar\rho_\Lambda(G) \not\subseteq \calC$ by Lemma~\ref{L:rule out non-split Cartan}.

\begin{lemma}  \label{L:Cartan character}
The character $\beta_\Lambda$ is unramified at all primes $p\nmid N\ell$.  If $\ell \geq 2k$ and $\ell \nmid N$, then the character $\beta_\Lambda$ is also unramified at $\ell$.
\end{lemma}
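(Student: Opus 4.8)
The plan is to handle the two assertions in turn. The first — that $\beta_\Lambda$ is unramified at every prime $p \nmid N\ell$ — is immediate from the construction: by the defining property of $\rho_\Lambda$ recalled in \S\ref{SS:general result}, the representation $\rho_\Lambda$, and hence its reduction $\bbar\rho_\Lambda$, is unramified at each $p \nmid N\ell$, and $\beta_\Lambda$ factors through $\bbar\rho_\Lambda$. So the real content is the statement at $\ell$. Assume from now on that $\ell \geq 2k$ and $\ell \nmid N$. Since $k \geq 2$ this forces $\ell \geq 5$; in particular $\ell$ is odd, $\ell \nmid 2N$, and $\ell \geq 2k > k-1$, so Lemma~\ref{L:inertia} applies with $\lambda = \Lambda \cap R$. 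Writing $H := \bbar\rho_\Lambda(\calI_\ell) \subseteq \calN$, it suffices to prove $H \subseteq \calC$, since then $\beta_\Lambda$ kills $\calI_\ell$.

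The main tool is an elementary observation about a subgroup $H$ of the normalizer $\calN$ of a Cartan subgroup $\calC$ of $\GL_2(\FF_\Lambda)$ with $H \not\subseteq \calC$: the subgroup $H_0 := H \cap \calC$ has index $2$ in $H$, and for any $g \in H \setminus H_0 \subseteq \calN - \calC$ the element $g^2$ is scalar (by the facts recorded before Lemma~\ref{L:subgroups}) while $g$ is not scalar; consequently, if moreover $H_0$ consists of scalars, then $H_0 = H \cap (\text{scalars})$ and the image of $H$ in $\PGL_2(\FF_\Lambda)$ has order exactly $2$. I would then argue by contradiction in each of the two cases of Lemma~\ref{L:inertia}, assuming $H \not\subseteq \calC$. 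In the case $r_\ell \not\equiv 0 \pmod{\lambda}$, Lemma~\ref{L:inertia}(\ref{L:inertia i}) shows $H$ is conjugate to a group of upper-triangular matrices, hence fixes a line $L \subseteq \FF_\Lambda^2$; then $H_0 \subseteq \calC$ fixes $L$ and $g \in \calN - \calC$ fixes $L$, which is impossible unless $H_0$ is scalar, because a non-scalar element of a Cartan subgroup has no fixed line in common with an element of $\calN - \calC$ — for a split Cartan the only lines fixed by a non-scalar element are its two eigenlines, which every element of $\calN - \calC$ interchanges, and for a non-split Cartan a non-scalar element has no $\FF_\Lambda$-rational eigenline at all. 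So $H_0$ is scalar and $\bbar\rho_\Lambda^\proj(\calI_\ell)$ has order $2$; this contradicts Lemma~\ref{L:inertia}(\ref{L:inertia i}), which says $\bbar\rho_\Lambda^\proj(\calI_\ell)$ contains a cyclic group of order $(\ell-1)/\gcd(\ell-1,k-1)$, and this order is $\geq 3$ since $\ell-1 \geq 2k-1 > 2(k-1) \geq 2\gcd(\ell-1,k-1)$.

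In the remaining case $r_\ell \equiv 0 \pmod{\lambda}$, Lemma~\ref{L:inertia}(\ref{L:inertia ii}) shows $H$ is cyclic and $\bbar\rho_\Lambda^\proj(\calI_\ell)$ is cyclic of order $(\ell+1)/\gcd(\ell+1,k-1)$, which is $\geq 3$ by the same estimate with $\ell+1$ in place of $\ell-1$. If $H \not\subseteq \calC$, then $H_0$ is the unique index-$2$ subgroup of the cyclic group $H$, namely $\langle g^2\rangle$, which is scalar, so $\bbar\rho_\Lambda^\proj(\calI_\ell)$ has order $2$ — a contradiction. Hence $H \subseteq \calC$ in both cases, i.e. $\beta_\Lambda$ is trivial on $\calI_\ell$ and so unramified at $\ell$. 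The one step needing care is the linear-algebra claim used in the first case — that a non-scalar Cartan element and an element of $\calN - \calC$ have no common fixed line — which must be verified separately for split and non-split Cartan subgroups and quietly uses that $\ell$ is odd; everything else is bookkeeping with Lemma~\ref{L:inertia} and the size bounds $(\ell \mp 1)/\gcd(\ell \mp 1, k-1) \geq 3$.
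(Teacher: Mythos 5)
Your proof is correct, and the overall strategy matches the paper's: show that ramification of $\beta_\Lambda$ at $\ell$ would force $\bbar\rho_\Lambda^\proj(\calI_\ell)$ to have order at most $2$, then contradict the lower bound from Lemma~\ref{L:inertia} using $\ell \geq 2k$. The difference is in how you reach "order at most $2$." The paper observes that $\ell > 2$ implies $\ell \nmid |\calN|$, so $\bbar\rho_\Lambda(\calI_\ell) \subseteq \calN$ has order prime to $\ell$; combined with Lemma~\ref{L:inertia}, this shows $\bbar\rho_\Lambda(\calI_\ell)$ is cyclic in \emph{both} cases (in case~(i), the $*$ must vanish because a nontrivial unipotent would have order $\ell$). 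With cyclicity in hand, the paper runs a single argument: a generator $g$ lying in $\calN - \calC$ has $g^2$ scalar, so the projective image has order at most $2$. You instead split on the two cases of Lemma~\ref{L:inertia} and, in case $r_\ell \not\equiv 0 \pmod\lambda$, replace the cyclicity observation with a fixed-line argument: since $H$ fixes a line $L$, both $H_0 \subseteq \calC$ and $g \in \calN - \calC$ fix $L$, and a non-scalar Cartan element shares no fixed line with an element of $\calN - \calC$ (eigenlines are swapped in the split case, absent in the non-split case), forcing $H_0$ to be scalar. This is a valid and somewhat more hands-on route; it bypasses the $\ell \nmid |\calN|$ observation at the cost of a small amount of explicit linear algebra. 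Your case $r_\ell \equiv 0 \pmod\lambda$ then coincides with the paper's generator argument, and the size bound $(\ell \mp 1)/\gcd(\ell \mp 1, k-1) \geq 3$ is handled the same way. In short: same skeleton, different flesh on the first subcase, and the paper's version is slightly slicker because it unifies the two subcases via the $|\calN|$ observation.
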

\begin{proof}
The character $\beta_\Lambda$ is unramified at each prime $p\nmid N\ell$ since $\bbar\rho_\Lambda$ is unramified at such primes.  Now suppose that $\ell \geq 2k$ and $\ell\nmid N$.   We have $\ell>2$, so $\ell \nmid |\calN|$ and hence Lemma~\ref{L:inertia} implies that $\bbar\rho_\Lambda(\calI_\ell)$ is cyclic.   Moreover, Lemma~\ref{L:inertia} implies that $\bbar\rho_\Lambda^\proj(\calI_\ell)$ is cyclic  of order $d\geq (\ell-1)/(k-1)$.  Our assumption $\ell\geq 2k$ ensures that $d>2$.  

Now take a generator $g$ of $\bbar\rho_\Lambda(\calI_\ell)$.   Suppose that $\beta_\Lambda$ is ramified at $\ell$ and hence $g$ belongs to $\calN-\calC$.    The condition $g\in \calN-\calC$ implies that $g^2$ is a scalar matrix and hence  $\bbar\rho_\Lambda^\proj(\calI_\ell)$ is a group of order $1$ or $2$.   This contradicts $d>2$, so $\beta_\Lambda$ is unramified at $\ell$.
\end{proof}

Let $\chi$ be the primitive Dirichlet character that satisfies $\beta_\Lambda(\Frob_p)=\chi(p)$ for all primes $p\nmid N\ell$.   Since $\beta_\Lambda$ is a quadratic character, Lemma~\ref{L:Cartan character} implies that the conductor of $\chi$ divides $\calM$.  The character $\chi$ is non-trivial since $\beta_\Lambda$ is non-trivial.  Assumption (\ref{P:criteria b}) implies that there is a prime $p\nmid N\ell$ satisfying $\chi(p)=-1$ and $r_p \not\equiv 0 \pmod{\lambda}$.  We thus have $g\in\calN-\calC$ and $\tr(g)\neq 0$, where $g:=\bbar\rho_{\Lambda}(\Frob_p) \in \calN$.   However, this contradicts that $\tr(A)=0$ for all $A\in \calN-\calC$.   

Therefore, the image of $\bbar\rho_\Lambda$ does not lie in the normalizer of a Cartan subgroup of $\GL_2(\FF_\Lambda)$.

\subsection{$\mathfrak{A}_5$ case}  \label{SS:A5 case}

Assume that $\bbar\rho^\proj_\Lambda(G)$  is isomorphic to $\mathfrak{A}_5$ with $\#k_\lambda \notin \{4,5\}$.

The image of $r_p/p^{k-1}= a_p^2/(\varepsilon(p)p^{k-1})$ in $\FF_\lambda$ is equal to $\tr(A)^2/\det(A)$ with $A=\bbar\rho_\Lambda(\Frob_p)$.   Every element of $\mathfrak{A}_5$ has order $1$, $2$, $3$ or $5$, so Lemma~\ref{L:easy image PGL} implies that the image of $r_p/p^{k-1}$ in $\FF_\lambda$ is  $0$, $1$, $4$ or is a root of $x^2-3x+1$ for all $p\nmid N\ell$.  If $\lambda | 5$, then Lemma~\ref{L:easy image PGL} implies that $k_\lambda=\FF_5$ which is excluded by our assumption on $k_\lambda$.   So $\lambda \nmid 5$ and Lemma~\ref{L:easy image PGL} ensures that $k_\lambda$ is the splitting field of $x^2-3x+1$ over $\FF_\ell$.   So $k_\lambda$ is $\FF_\ell$ if $\ell \equiv \pm 1 \pmod{5}$ and $\FF_{\ell^2}$ if $\ell \equiv \pm 2 \pmod{5}$.    

From assumption (\ref{P:criteria c}), we find that $\ell > 5k-4$ and $\ell \nmid N$.  By Lemma~\ref{L:inertia}, the group $\bbar\rho_\Lambda^\proj(G)$ contains an element of order at least $(\ell-1)/(k-1) > ((5k-4)-1)/(k-1) = 5$.   This is a contradiction since $\mathfrak{A}_5$ has no elements with order greater than $5$.

\subsection{$\mathfrak{A}_4$ and $\mathfrak{S}_4$ cases}  \label{SS:S4 case}

Suppose that $\bbar\rho^\proj_\Lambda(G)$  is isomorphic to $\mathfrak{A}_4$ or $\mathfrak{S}_4$ with $\#k_\lambda \neq 3$.

First suppose that $\#k_\lambda \notin \{5,7\}$.   The image of $r_p/p^{k-1}= a_p^2/(\varepsilon(p)p^{k-1})$ in $\FF_\lambda$ is equal to $\tr(A)^2/\det(A)$ with $A=\bbar\rho_\Lambda(\Frob_p)$.    Since every element of $\mathfrak{S}_4$ has order at most $4$, Lemma~\ref{L:easy image PGL} implies that $r_p/p^{k-1}$ is congruent to $0$, $1$, $2$ or $4$ modulo $\lambda$ for all primes $p\nmid N\ell$.  In particular, $k_\lambda=\FF_\ell$.    By assumption~(\ref{P:criteria d}), we must have $\ell>4k-3$ and $\ell\nmid N$.  By Lemma~\ref{L:inertia}, the group $\bbar\rho_\Lambda(G)$ contains an element of order at least $(\ell-1)/(k-1) > ((4k-3)-1)/(k-1) = 4$.   This is a contradiction since $\mathfrak{S}_4$ has no elements with order greater than $4$.

Now suppose that $\#k_\lambda\in \{5,7\}$.  By assumption (\ref{P:criteria e}), with any $\chi$, there is a prime $p\nmid N\ell$ such that $a_p^2/(\varepsilon(p)p^{k-1}) \equiv 2 \pmod{\lambda}$.    The element $g:=\bbar\rho_\Lambda^\proj(\Frob_p)$ has order $1$, $2$, $3$ or $4$.   By Lemma~\ref{L:easy image PGL}, we deduce that $g$ has order $4$.   Since $\mathfrak{A}_4$ has no elements of order $4$, we deduce that $H:=\bbar\rho^\proj_\Lambda(G)$ is isomorphic to $\mathfrak{S}_4$.  Let $H'$ be the unique index $2$ subgroup of $H$; it is isomorphic to $\mathfrak{A}_4$.   Define the character 
\[
\beta \colon G \xrightarrow{\bbar\rho_\Lambda^\proj} H \to H/H' \cong \{\pm 1\}.
\]
The quadratic character $\beta$ corresponds to a Dirichlet character  $\chi$ whose conductor divides $4^e \ell N$.  By assumption (\ref{P:criteria e}), there is a prime $p\nmid N\ell$ such that $\chi(p)=1$ and $a_p^2/(\varepsilon(p)p^{k-1})\equiv 2 \pmod{\lambda}$.    Since $\beta(\Frob_p)=\chi(p)=1$, we have $\bbar\rho_\Lambda^\proj(\Frob_p) \in H'$.  Since $H'\cong \mathfrak{A}_4$, Lemma~\ref{L:easy image PGL} implies that the image of $a_p^2/(\varepsilon(p)p^{k-1})$ in $\FF_\lambda$ is $0$, $1$ or $4$.   This contradicts $a_p^2/(\varepsilon(p)p^{k-1})\equiv 2 \pmod{\lambda}$.   

Therefore, the image of $\bbar\rho_\Lambda^\proj$ is not isomorphic to either of the groups $\mathfrak{A}_4$ or $\mathfrak{S}_4$.

\subsection{End of proof}

In \S\ref{SS:Borel case 0}, we saw that $\bbar\rho_\Lambda(G)$ is not contained in a Borel subgroup of $\GL_2(\FF_\Lambda)$. In \S\ref{SS:Cartan case}, we saw that $\bbar\rho_\Lambda(G)$ is not contained in the normalizer of a Cartan subgroup of $\GL_2(\FF_\Lambda)$.

 In \S\ref{SS:A5 case}, we showed that if $\#k_\lambda\notin \{4,5\}$, then $\bbar\rho^\proj_\Lambda(G)$ is not isomorphic to $\mathfrak{A}_5$.  We want to exclude the cases $\#k_\lambda \in \{4,5\}$ since $\PSL_2(\FF_4)$ and $\PSL_2(\FF_5)$ are both isomorphic to $\mathfrak{A}_5$.

 In \S\ref{SS:S4 case}, we showed that if $\#k_\lambda\neq 3$, then $\bbar\rho^\proj_\Lambda(G)$ is not isomorphic to $\mathfrak{A}_4$ and not isomorphic to $\mathfrak{S}_4$.  We want to exclude the case $\#k_\lambda=3$ since $\PSL_2(\FF_3)$ and $\PGL_2(\FF_3)$ are isomorphic to $\mathfrak{A}_4$ and $\mathfrak{S}_4$, respectively.

By Lemma~\ref{L:subgroups}, the group $\bbar\rho_\Lambda^{\proj}(G)$ must be conjugate in $\PGL_2(\FF_\Lambda)$ to $\PSL_2(\FF')$ or $\PGL_2(\FF')$, where $\FF'$ is a subfield of $\FF_\Lambda$.  One can then show that $\FF'$ is the subfield of $\FF_\Lambda$ generated by the set $\{ \tr(A)^2/\det(A) : A \in \bbar\rho_\Lambda(G)\}$.   By the Chebotarev density theorem, the field $\FF'$ is the subfield of $\FF_\Lambda$ generated by the images of $a_p^2/(\varepsilon(p) p^{k-1})=r_p/p^{k-1}$ with $p\nmid N\ell$.   Therefore, $\FF'=k_\lambda$ and hence $\bbar\rho_\Lambda^{\proj}(G)$ is conjugate in $\PGL_2(\FF_\Lambda)$ to $\PSL_2(k_\lambda)$ or $\PGL_2(k_\lambda)$.

\section{Examples}

\subsection{Example from \S\ref{SS:weight 3 level 27}} \label{SS:3 27}

Let $f$ be the newform from \S\ref{SS:weight 3 level 27}.  We have $E=\QQ(i)$.    We know that $\Gamma\neq 1$ since $\varepsilon$ is non-trivial.   Therefore, $\Gamma=\Gal(\QQ(i)/\QQ)$ and $K=E^{\Gamma}$ equals $\QQ$.  So $\Gamma$ is generated by complex conjugation and we have $\bbar{a}_p = \varepsilon(p)^{-1} a_p$ for $p\nmid N$.     As noted in \S\ref{SS:weight 3 level 27}, this implies that $r_p$ is a square in $\ZZ$ for all $p\nmid N$ and hence $L$ equals $K=\QQ$.   Fix a prime $\ell=\lambda$ and a prime ideal $\Lambda | \ell$ of $\OO=\ZZ[i]$.  

Set $q_1=109$ and $q_2=379$; they are primes that are congruent to $1$ modulo $27$.  Set $p_1=5$, we have $\chi(p_1)=-1$, where $\chi$ is the unique non-trivial character $(\ZZ/3\ZZ)^\times \to \{\pm 1\}$.  Set $q=5$; the field $\QQ(r_q)$ equals $K=\QQ$ and hence $\ZZ[r_{q}]=\ZZ$.

One can verify that $a_{109}=164$, $a_{379}=704$ and $a_5=-3i$, so $r_{109}=164^2$, $r_{379}=704^2$ and $r_5=3^2$.     We have 
\begin{equation} \label{E:disc fact}
r_{109} - (1+109^2)^2 = -2^2\cdot 3^3\cdot 7\cdot 19\cdot 31\cdot 317 \quad \text{ and }\quad 
r_{379} - (1+379^2)^2 = -2^2 \cdot 3^3\cdot 2647 \cdot 72173.
\end{equation} 
So if $\ell\geq 11$, then there is an $i\in \{1,2\}$ such that $\ell \neq q_i$ and $r_{q_i}\not\equiv (1+q_i^2)^2\pmod{\ell}$.  

Let $S$ be the set from \S\ref{S:effective Ribet} with the above choice of $q_1$, $q_2$, $p_1$ and $q$.  We find that $S = \{2,3,5,7,11\}$.  Theorem~\ref{T:effective version} implies that $\bbar\rho_\Lambda^\proj(G)$ is conjugate in $\PGL_2(\FF_\Lambda)$ to $\PSL_2(\FF_\ell)$ when $\ell>11$.\\

Now take $\ell \in \{7,11\}$.  Choose a prime ideal $\Lambda$ of $\OO$ dividing $\ell$.  We have $e_0=e_1=e_2=0$ and $\calM=3$.   The subfield $k_\ell$ generated over $\FF_\ell$ by the images of $r_p$ modulo $\ell$ with $p\nmid N\ell$ is of course $\FF_\ell$ (since the $r_p$ are rational integers).   We now verify the conditions of Theorem~\ref{T:criteria}.

We first check condition (\ref{P:criteria a}).   Suppose there is a character $\chi\colon (\ZZ/27\ZZ)^\times \to \FF_\ell^\times$ such that $\chi(q_2)$ is a root of $x^2-a_{q_2} x + \varepsilon(q_2) q_2^2$ modulo $\ell$.    Since $q_2\equiv 1\pmod{27}$ and $a_{q_2}=704$,  we find that $1$ is a root of $x^2-a_{q_2} x +  q_2^2 \in \FF_\ell[x]$.   Therefore, $a_{q_2} \equiv 1+q_2^2 \pmod{\ell}$ and hence $r_{q_2}^2 = a_{q_2}^2 \equiv  (1+ q_2^2)^2 \pmod{\ell}$.   Since $\ell \in\{7,11\}$, this contradicts (\ref{E:disc fact}).   This proves that condition (\ref{P:criteria a}) holds.  

We now check condition (\ref{P:criteria b}).  Let $\chi\colon (\ZZ/3\ZZ)^\times \to \{\pm 1\}$ be the non-trivial character.    We have $\chi(5)=-1$ and $r_5 = 9 \not\equiv 0 \pmod{\ell}$.   Therefore, (\ref{P:criteria b}) holds.

We now check condition (\ref{P:criteria c}).   If $\ell=7$, we have $\ell \equiv 2 \pmod{5}$ and $\#k_\ell = \ell \neq \ell^2$, so condition (\ref{P:criteria c}) holds.   Take $\ell=11$.   We have $a_5^2/(\varepsilon(5) 5^2) = 9/5^2 \equiv 3 \pmod{11}$, which verifies (\ref{P:criteria c}).

Condition (\ref{P:criteria d}) holds since $\#k_\ell = 5$ if $\ell = 7$, and $\ell>4k-3=9$ and $\ell\nmid N$ if $\ell=11$.

Finally we explain why condition (\ref{P:criteria e}) holds when $\ell=7$.   Let $\chi\colon (\ZZ/7\cdot 27 \ZZ)^\times \to\{\pm 1\}$ be any non-trivial character.    A quick computation shows that there is a prime $p\in \{13,37,41\}$ such that $\chi(p)=1$ and that $a_p^2/(\varepsilon(p) p^2) \equiv 2 \pmod{7}$.

Theorem~\ref{T:criteria} implies that $\bbar\rho_\Lambda^\proj(G)$ is conjugate in $\PGL_2(\FF_\Lambda)$ to $\PSL_2(\FF_\ell)$ or $\PGL_2(\FF_\ell)$.   Since $L=K$, the group $\bbar\rho_\Lambda^\proj(G)$ isomorphic to $\PSL_2(\FF_{\ell})$ by Theorem~\ref{T:PSL2 vs GL2}(\ref{T:PSL2 vs GL2 i}).

\subsection{Example from \S\ref{SS:weight 3 level 160}} \label{SS:3 160}
Let $f$ be a newform as in \S\ref{SS:weight 3 level 160}; we have $k=3$ and $N=160$.     The \texttt{Magma} code below verifies that $f$ is uniquely determined up to replacing by a quadratic twist and then a Galois conjugate.  So the group $\bbar\rho_\Lambda^\proj(G)$, up to isomorphism, does not depend on the choice of $f$.

{\small
\begin{verbatimtab}
    eps:=[c: c in Elements(DirichletGroup(160)) | Order(c) eq 2 and Conductor(c) eq 20][1];
    M:=ModularSymbols(eps,3);
    F:=NewformDecomposition(NewSubspace(CuspidalSubspace(M)));
    assert #F eq 2; 
    _,chi:=IsTwist(F[1],F[2],5);  
    assert Order(chi) eq 2;
\end{verbatimtab}
}

Define $b = \zeta_{13}^1 + \zeta_{13}^5+\zeta_{13}^8 +\zeta_{13}^{12}$, where $\zeta_{13}$ is a primitive $13$-th root of unity in $\Qbar$ (note that $\{1,5,8,12\}$ is the unique index $3$ subgroup of $\FF_{13}^\times$).  The characteristic polynomial of $b$ is $x^3 + x^2 - 4x + 1$ and hence $\QQ(b)$ is the unique cubic extension of $\QQ$ in $\QQ(\zeta_{13})$.  The code below shows that the coefficient field $E$ is equal to $\QQ(b,i)$ (it is a degree $6$ extension of $\QQ$ that contains roots of $x^3 + x^2 - 4x + 1$ and $x^2+1$).
{\small
\begin{verbatimtab}
    f:=qEigenform(F[1],2001);
    a:=[Coefficient(f,n): n in [1..2000]]; 
    E:=AbsoluteField(Parent(a[1]));  
    Pol<x>:=PolynomialRing(E);
    assert Degree(E) eq 6 and HasRoot(x^3+x^2-4*x+1) and HasRoot(x^2+1);
\end{verbatimtab}
}

Fix notation as in \S\ref{SS:inner twists}.  We have $\Gamma\neq 1$ since $\varepsilon$ is non-trivial.   The character $\chi_\gamma^2$ is trivial for $\gamma \in \Gamma$ (since $\chi_\gamma$ is always a quadratic character times some power of $\varepsilon$).   Therefore, $\Gamma$ is a $2$-group.     The field $K=E^\Gamma$ is thus $\QQ(b)$ which is the unique cubic extension of $\QQ$ in $E$.   Therefore, $r_p = a_p^2/\varepsilon(p)$ lies in $K=\QQ(b)$ for all $p\nmid N$.

The code below verifies that $r_3$, $r_7$ and $r_{11}$ are squares in $K$ that do not lie in $\QQ$ (and in particular, are non-zero).    Since $3$, $7$ and $11$ generate the group $(\ZZ/40\ZZ)^\times$, we deduce from Lemma~\ref{L:L description} that the field $L=K(\{\sqrt{r_p}:p\nmid N\})$ is equal to $K$. 

{\small
\begin{verbatimtab}[7]
	_,b:=HasRoot(x^3+x^2-4*x+1);  K:=sub<E|b>;
	r3:=K!(a[3]^2/eps(3)); r7:=K!(a[7]^2/eps(7)); r11:=K!(a[11]^2/eps(11));
	assert IsSquare(r3) and IsSquare(r7) and IsSquare(r11);
	assert r3 notin Rationals() and r7 notin Rationals() and r11 notin Rationals();
\end{verbatimtab}
}

\noindent Let $N_{K/\QQ}\colon K\to \QQ$ be the norm map.  The following code verifies that $N_{K/\QQ}(r_3) = 2^6$, $N_{K/\QQ}(r_7) = 2^6$, $N_{K/\QQ}(r_{11}) = 2^{12}5^4$, $N_{K/\QQ}(r_{13})=2^{12} 13^2$, $N_{K/\QQ}(r_{17}) = 2^{18}5^2$, and that
\begin{equation} \label{E:end gcd}
\gcd\Big( 641\cdot N_{K/\QQ}(r_{641}-(1+642^2)^2), \, 1061\cdot N_{K/\QQ}(r_{1061}-(1+1061^2)^2)\Big)  = 2^{12}.\\
\end{equation}
{\small
\begin{verbatimtab}[7]
	r13:=K!(a[13]^2/eps(13)); r17:=K!(a[17]^2/eps(17));
	assert Norm(r3) eq 2^6 and Norm(r7) eq 2^6 and Norm(r11) eq 2^12*5^4; 
	assert Norm(r13) eq 2^12*13^2 and Norm(r17) eq 2^18*5^2; 
	r641:=K!(a[641]^2/eps(641));   
	r1061:=K!(a[1061]^2/eps(1061)); 
	n1:=Integers()!Norm(r641-(1+641^2)^2);   
	n2:=Integers()!Norm(r1061-(1+1061^2)^2); 
	assert GCD(641*n1,1061*n2) eq 2^12;
\end{verbatimtab}
}

Set $q_1=641$ and $q_2=1061$; they are primes congruent to $1$ modulo $160$.    Let $\lambda$ be a prime ideal of $R$ dividing a rational prime $\ell>3$.     From (\ref{E:end gcd}), we find that $\ell\neq q_i$ and $r_{q_i} \not\equiv(1+q_i^2)^2 \pmod{\lambda}$ for some $i\in \{1,2\}$ (otherwise $\lambda$ would divide $2$).

Set $p_1=3$, $p_2= 7$ and $p_3=11$.    For each non-trivial quadratic characters $\chi\colon (\ZZ/40 \ZZ)^\times \to \{\pm 1\}$, we have $\chi(p_i)=-1$ for some prime $i\in\{1,2,3\}$ (since $3$, $7$ and $11$ generate the group $(\ZZ/40\ZZ)^\times$).  From the computed values of $N_{K/\QQ}(r_p)$, we find that $r_{p_i} \not\equiv 0 \pmod{\lambda}$ for all $i\in \{1,2,3\}$ and all non-zero prime ideals $\lambda \nmid N$ of $R$. 

Set $q=3$.   We have noted that $r_{q}\in K-\QQ$, so $K=\QQ(r_{q})$.     The index of the order $\ZZ[r_{q}]$ in $R$ is a power of $2$ since $N_{K/\QQ}(q)$ is a power of $2$.

Let $S$ be the set from \S\ref{S:effective Ribet} with the above choice of $q_1$, $q_2$, $p_1$, $p_2$, $p_3$ and $q$.  The above computations show that $S$ consists of the prime ideals $\lambda$ of $R$ that divide a prime $\ell\leq 11$.\\

Now let $\ell$ be an odd prime congruent to  $\pm 2$, $\pm 3$, $\pm 4$ or $\pm 6$ modulo $13$.   Since $K$ is the unique cubic extension in $\QQ(\zeta_{13})$, we find that the ideal $\lambda:= \ell R$ is prime in $R$ and that $\FF_\lambda\cong\FF_{\ell^3}$.    The above computations show that $\lambda \notin S$ when $\ell \notin \{3,7,11\}$.    Theorem~\ref{T:effective version} implies that if $\ell\notin \{3,7,11\}$, then $\bbar\rho_\Lambda^\proj(G)$ is conjugate in $\PGL_2(\FF_\Lambda)$ to $\PSL_2(\FF_\lambda)$ or $\PGL_2(\FF_\lambda)$, where $\Lambda$ is a prime ideal of $\OO$ dividing $\lambda$. So if $\ell \notin \{3,7,11\}$, the group $\bbar\rho_\Lambda^\proj(G)$ isomorphic to $\PSL_2(\FF_\lambda)\cong \PSL_2(\FF_{\ell^3})$ by Theorem~\ref{T:PSL2 vs GL2}(\ref{T:PSL2 vs GL2 i}) and the equality $L=K$.\\

Now take $\lambda = \ell R$ with $\ell \in \{3,7,11\}$; it is a prime ideal.  Choose a prime ideal $\Lambda$ of $\OO$ dividing $\lambda$.  We noted above that $\ZZ[r_3]$ is an order in $R$ with index a power of $2$; the same argument shows that this also holds for the order $\ZZ[r_7]$.  Therefore, the field $k_\lambda$ generated over $\FF_\ell$ by the images of $r_p$ modulo $\lambda$ with $p\nmid N\ell$ is equal to $\FF_\lambda$.   Since $\#\FF_\lambda =\ell^3$, we find that conditions (\ref{P:criteria c}), (\ref{P:criteria d}) and (\ref{P:criteria e}) of Theorem~\ref{T:criteria} hold.

We now show that condition (\ref{P:criteria a}) of Theorem~\ref{T:criteria} holds for our fixed $\Lambda$.   We have $e_0=0$, so take any character $\chi\colon (\ZZ/N\ZZ)^\times \to \FF_\Lambda^\times$.   We claim that $\chi(q_i)\in \FF_\Lambda$ is not a root of $x^2-a_{q_i} x + \varepsilon(q_i) q_i^2$ for some $i\in \{1,2\}$.   Since $q_i\equiv 1\pmod{N}$,  the claim is equivalent to showing that $a_{q_i} \not\equiv 1 + q_i^2 \pmod{\Lambda}$ for some $i\in \{1,2\}$.    So we need to prove that $r_{q_i} \equiv (1 + q_i^2)^2 \pmod{\lambda}$ for some $i\in \{1,2\}$; this is clear since otherwise $\ell$ divides the quantity (\ref{E:end gcd}).   This completes our verification of (\ref{P:criteria a}).

We now show that condition (\ref{P:criteria b}) of Theorem~\ref{T:criteria} holds.   We have $r_p \not\equiv 0 \pmod{\lambda}$ for all primes $p\in \{3,7,11,13,17\}$; this is a consequence of $N_{K/\QQ}(r_p)\not\equiv 0 \pmod{\ell}$.  We have $\calM= 120$ if $\ell=3$ and $\calM=40$ otherwise.  Condition (\ref{P:criteria b}) holds since $(\ZZ/\calM\ZZ)^\times$ is generated by the primes $p\in \{3,7,11,13,17\}$ for which $p\nmid \calM\ell$.

Theorem~\ref{T:criteria} implies that $\bbar\rho_\Lambda^\proj(G)$ is conjugate in $\PGL_2(\FF_\Lambda)$ to $\PSL_2(\FF_\lambda)$ or $\PGL_2(\FF_\lambda)$.   Since $L=K$, the group $\bbar\rho_\Lambda^\proj(G)$ isomorphic to $\PSL_2(\FF_\lambda)\cong \PSL_2(\FF_{\ell^3})$ by Theorem~\ref{T:PSL2 vs GL2}(\ref{T:PSL2 vs GL2 i}).


\begin{bibdiv}
\begin{biblist}

\bib{Magma}{article}{
      author={Bosma, Wieb},
      author={Cannon, John},
      author={Playoust, Catherine},
       title={The {M}agma algebra system. {I}. {T}he user language},
        date={1997},
     journal={J. Symbolic Comput.},
      volume={24},
      number={3-4},
       pages={235\ndash 265},
        note={Computational algebra and number theory (London, 1993)},
}

\bib{MR3188630}{article}{
      author={Billerey, Nicolas},
      author={Dieulefait, Luis~V.},
       title={Explicit large image theorems for modular forms},
        date={2014},
        ISSN={0024-6107},
     journal={J. Lond. Math. Soc. (2)},
      volume={89},
      number={2},
       pages={499\ndash 523},
      review={\MR{3188630}},
}

\bib{Deligne71-179}{incollection}{
      author={Deligne, Pierre},
       title={{F}ormes modulaires et repr\'esentations $\ell$-adiques},
        date={1971},
   booktitle={{L}ecture {N}otes in {M}athematics},
   publisher={Springer-Verlag},
       pages={139\ndash 172},
}

\bib{MR1879665}{article}{
      author={Dieulefait, Luis~V.},
       title={Newforms, inner twists, and the inverse {G}alois problem for
  projective linear groups},
        date={2001},
        ISSN={1246-7405},
     journal={J. Th\'eor. Nombres Bordeaux},
      volume={13},
      number={2},
       pages={395\ndash 411},
      review={\MR{1879665 (2003c:11053)}},
}

\bib{MR2512358}{incollection}{
      author={Dieulefait, Luis},
       title={Galois realizations of families of projective linear groups via
  cusp forms},
        date={2008},
   booktitle={Modular forms on {S}chiermonnikoog},
   publisher={Cambridge Univ. Press},
     address={Cambridge},
       pages={85\ndash 92},
      review={\MR{2512358 (2010k:11086)}},
}

\bib{MR1800679}{article}{
      author={Dieulefait, Luis},
      author={Vila, N{\'u}ria},
       title={Projective linear groups as {G}alois groups over {${\bf Q}$} via
  modular representations},
        date={2000},
        ISSN={0747-7171},
     journal={J. Symbolic Comput.},
      volume={30},
      number={6},
       pages={799\ndash 810},
        note={Algorithmic methods in Galois theory},
      review={\MR{1800679 (2001k:11093)}},
}

\bib{MR2806684}{article}{
      author={Dieulefait, Luis},
      author={Wiese, Gabor},
       title={On modular forms and the inverse {G}alois problem},
        date={2011},
        ISSN={0002-9947},
     journal={Trans. Amer. Math. Soc.},
      volume={363},
      number={9},
       pages={4569\ndash 4584},
      review={\MR{2806684 (2012k:11069)}},
}

\bib{MR1176206}{article}{
      author={Edixhoven, Bas},
       title={The weight in {S}erre's conjectures on modular forms},
        date={1992},
        ISSN={0020-9910},
     journal={Invent. Math.},
      volume={109},
      number={3},
       pages={563\ndash 594},
      review={\MR{1176206 (93h:11124)}},
}

\bib{MR0224703}{book}{
      author={Huppert, B.},
       title={Endliche {G}ruppen. {I}},
      series={Die Grundlehren der Mathematischen Wissenschaften, Band 134},
   publisher={Springer-Verlag, Berlin-New York},
        date={1967},
      review={\MR{0224703 (37 \#302)}},
}

\bib{MR987567}{article}{
      author={Livn{\'e}, Ron},
       title={On the conductors of mod {$l$} {G}alois representations coming
  from modular forms},
        date={1989},
        ISSN={0022-314X},
     journal={J. Number Theory},
      volume={31},
      number={2},
       pages={133\ndash 141},
      review={\MR{987567 (90f:11029)}},
}

\bib{MR617867}{article}{
      author={Momose, Fumiyuki},
       title={On the {$l$}-adic representations attached to modular forms},
        date={1981},
        ISSN={0040-8980},
     journal={J. Fac. Sci. Univ. Tokyo Sect. IA Math.},
      volume={28},
      number={1},
       pages={89\ndash 109},
      review={\MR{617867 (84a:10025)}},
}

\bib{MR0419358}{article}{
      author={Ribet, Kenneth~A.},
       title={On {$l$}-adic representations attached to modular forms},
        date={1975},
        ISSN={0020-9910},
     journal={Invent. Math.},
      volume={28},
       pages={245\ndash 275},
      review={\MR{0419358 (54 \#7379)}},
}

\bib{MR0453647}{incollection}{
      author={Ribet, Kenneth~A.},
       title={Galois representations attached to eigenforms with {N}ebentypus},
        date={1977},
   booktitle={Modular functions of one variable, {V} ({P}roc. {S}econd
  {I}nternat. {C}onf., {U}niv. {B}onn, {B}onn, 1976)},
   publisher={Springer},
     address={Berlin},
       pages={17\ndash 51. Lecture Notes in Math., Vol. 601},
      review={\MR{0453647 (56 \#11907)}},
}

\bib{MR819838}{article}{
      author={Ribet, Kenneth~A.},
       title={On {$l$}-adic representations attached to modular forms. {II}},
        date={1985},
        ISSN={0017-0895},
     journal={Glasgow Math. J.},
      volume={27},
       pages={185\ndash 194},
      review={\MR{819838 (88a:11041)}},
}

\bib{MR1352266}{incollection}{
      author={Reverter, Amadeu},
      author={Vila, N{\'u}ria},
       title={Some projective linear groups over finite fields as {G}alois
  groups over {${\bf Q}$}},
        date={1995},
   booktitle={Recent developments in the inverse {G}alois problem ({S}eattle,
  {WA}, 1993)},
      series={Contemp. Math.},
      volume={186},
   publisher={Amer. Math. Soc.},
     address={Providence, RI},
       pages={51\ndash 63},
      review={\MR{1352266 (96g:12006)}},
}

\bib{MR885783}{article}{
      author={Serre, Jean-Pierre},
       title={Sur les repr\'esentations modulaires de degr\'e {$2$} de {${\rm
  Gal}(\overline{\bf Q}/{\bf Q})$}},
        date={1987},
        ISSN={0012-7094},
     journal={Duke Math. J.},
      volume={54},
      number={1},
       pages={179\ndash 230},
      review={\MR{885783 (88g:11022)}},
}

\bib{Zywina-PSL2}{unpublished}{
      author={Zywina, David},
       title={The inverse {G}alois problem for
  $\operatorname{PSL}_2(\mathbb{F}_p)$},
        date={2013},
        note={preprint arXiv:1303.3646  (to appear, Duke Mathematical Journal)},
}

\end{biblist}
\end{bibdiv}

\end{document}